\newtheorem{theorem}{Theorem}[section]
\newtheorem*{theorem*}{Theorem}
\newtheorem{proposition}[theorem]{Proposition}
\newtheorem{claim}[theorem]{Claim}
\newtheorem{corollary}[theorem]{Corollary}
\theoremstyle{remark}
\newtheorem{remark}[theorem]{Remark}
\newcommand{\eps}{\varepsilon}
\newcommand{\E}{\mathbb E}
\newcommand{\eq}{\begin{equation}}
\newcommand{\en}{\end{equation}}
\newcommand{\ii}{\mathbf i}
\newcommand{\q}{\mathfrak q}
\renewcommand{\O}{\mathcal O}
\newcommand{\s}{\mathfrak s}
\newcommand{\h}{\mathbf h}
\newcommand{\R}{\mathcal R}
\newcommand{\Rd}{\mathcal R^d}
\renewcommand{\S}{\mathbb S}
\begin{document}

\begin{frontmatter}

\title{A stochastic telegraph equation from the six-vertex model}
\runtitle{Telegraph from the six-vertex model}

\begin{aug}

\author{Alexei Borodin\thanksref{t1,m1,m2}}\and
\author{Vadim Gorin\thanksref{t2,m1,m2}}

\thankstext{t1}{MIT, Department of Mathematics,  77 Massachusetts Avenue,
Cambridge, MA 02139. \url{borodin@math.mit.edu}}
\thankstext{t2}{MIT, Department of Mathematics,  77 Massachusetts Avenue,
Cambridge, MA 02139. \url{vadicgor@gmail.com}}

\affiliation{Massachusetts Institute of Technology\thanksmark{m1} and Institute for Information Transmission Problems\thanksmark{m2}}

\runauthor{Alexei Borodin and Vadim Gorin}

\end{aug}

\begin{abstract}

A stochastic telegraph equation is defined by adding a random inhomogeneity to the classical (second
order linear hyperbolic) telegraph differential equation. The inhomogeneities we consider are
proportional to the two-dimensional white noise, and solutions to our equation are
two-dimensional random Gaussian fields. We show that such fields arise naturally as asymptotic
fluctuations of the height function in a certain limit regime of the stochastic six vertex model
in a quadrant. The corresponding law of large numbers -- the limit shape of the height function
-- is described by the (deterministic) homogeneous telegraph equation.

\end{abstract}


\begin{keyword}
 Six-vertex model; Telegraph equation; Gaussian fields
\end{keyword}

\end{frontmatter}

\section{Introduction}

\subsection{Preface}

The central object of this work is a second order inhomogeneous linear differential equation
\begin{equation}
\label{eq_intro_telegraph}
 f_{XY}(X,Y)+\beta_1 f_Y(X,Y) + \beta_2 f_X(X,Y)=u(X,Y), \qquad x,y\ge 0,
\end{equation}
on an unknown function $f(X,Y)$ with given right-hand side $u(X,Y)$ and constants
$\beta_1,\beta_2\in\mathbb R$. The equation \eqref{eq_intro_telegraph} is known (in equivalent
forms obtained by multiplying the unknown function $f$ with $\exp(aX+bY)$) as the telegraph equation or the Klein-Gordon equation.

We will be particularly interested in the case when the inhomogeneity $u(X,Y)$ is proportional to
the two-dimensional white noise $\eta$,
\begin{equation}
\label{eq_intro_stoch}
 u(X,Y)=v(X,Y)\, \eta,
\end{equation}
where the prefactor $v(X,Y)$ will be made explicit later. We call \eqref{eq_intro_telegraph},
\eqref{eq_intro_stoch} the \emph{stochastic telegraph equation}.

The deterministic equation \eqref{eq_intro_telegraph} is a classical object, see e.g.\
\cite[Chapter V]{Courant}, and its stochastic versions were intensively studied in the last 50
years. Random terms were first added to hyperbolic PDEs in \cite{Cabana}, \cite{Cairoli}, and there
have been numerous developments since then. We will not try to survey those, but let us still
mention a few. The maximum of the solution was analyzed in \cite{Orsingher_tele}. The existence,
uniqueness, and regularity of the solutions in non-linear situations are discussed in
\cite{Funaki}, \cite{Carmona_Nualart_1}, \cite{Carmona_Nualart_2}, \cite{Nualart_Tindel},
\cite{Rovira_Sole}, \cite{Mueller}. The higher-dimensional setting is considered in several
articles including \cite{Dalang_Frangos}, \cite{Dalang_Leveque}, \cite{Conus_Dalang},
\cite{Ondrejat}, \cite{M-Sole}. Significant amount of work was devoted to the design of discrete
approximation schemes and numeric algorithms, e.g., in \cite{MPW}, \cite{QS}, \cite{Walsh_approx},
\cite{KLS}.
 Further, \cite{DMT}
develops Feynman--Kac type formulas, \cite{Dalang_Muller} and \cite{CJK} study intermittency of the
solutions, and \cite{KN} deals with (non-Gaussian) L\'{e}vy noises. Stochastic hyperbolic partial
differential equations were also surveyed in \cite{Dalang_lectures}, and mentioned in textbooks
\cite{Walsh}, \cite{DPZ}.

The direction we take in the present paper appears different from any of the prior works, however.
Our interest in the stochastic telegraph equation stems from the fact that it governs the asymptotics
of the macroscopic fluctuations for a particular case of a celebrated lattice model of Statistical
Mechanics called the six-vertex model; we refer to \cite{Baxter} for general information about this
and related models.

More concretely, we deal with the \emph{stochastic six-vertex model} (as well as its deformation
-- the \emph{dynamic} six--vertex model), that was first introduced in \cite{GS} and whose asymptotic behavior
has been recently studied in \cite{BCG}, \cite{Ag}, \cite{Ag_ASEP}, \cite{CT}, \cite{RS}, \cite{BBCW}, \cite{CGST}. The model is defined in the positive quadrant
via a sequential stochastic procedure. We postpone the exact definition till the next subsection, and
for now let us just say that the configurations of the model can be viewed as collections of
lattice paths on the square grid that may touch each other but can never cross, see Figure
\ref{Fig_state_space}. These paths are further interpreted as level lines of a function $H(X,Y)$ called the \emph{the height function}.

\begin{figure}[t]
\begin{center}
{\scalebox{0.45}{\includegraphics{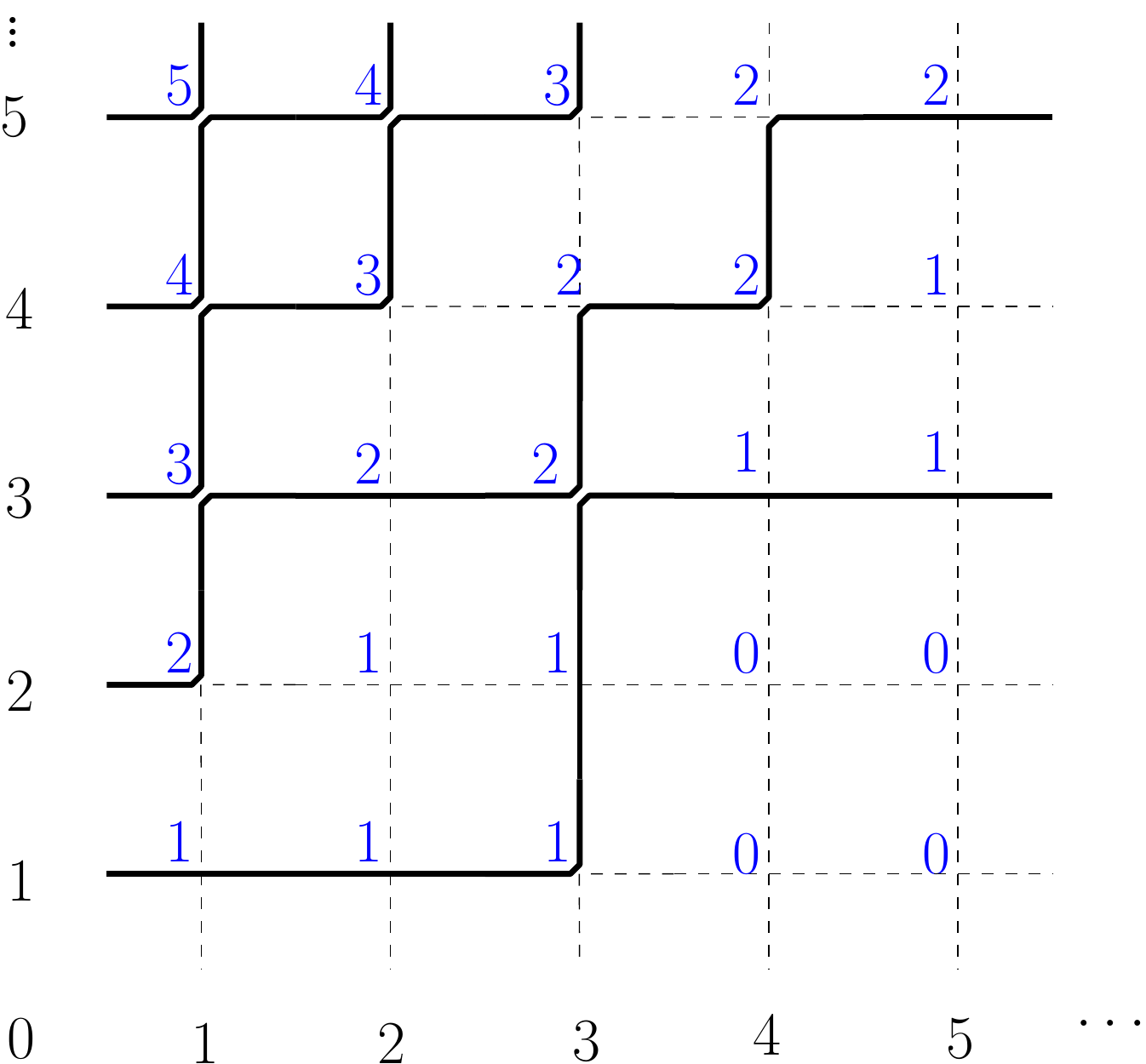}}}

 \caption{
 Configuration of the six-vertex model in the positive quadrant with the domain wall boundary conditions
 and the corresponding height function $H(x,y)$.
 \label{Fig_state_space} }
\end{center}

\end{figure}

We investigate the limit regime in which the mesh size of the grid goes to $0$, and simultaneously
the turns of the paths become rare -- the weights of two of the six possible local edge configurations around a vertex converge to
zero. We find that the exponential $q^{H(X,Y)}$, where $q$ is a quantization
parameter involved in the definition of the model (that tends to 1 in our limit regime), converges to a non-random \emph{limit shape},
which solves \eqref{eq_intro_telegraph} with zero right-hand side $u(X,Y)\equiv 0$. Simultaneously,
centered and scaled fluctuations of $q^{H(X,Y)}$ converge to solutions of the stochastic
telegraph equation \eqref{eq_intro_telegraph}, \eqref{eq_intro_stoch}.

The stochastic six-vertex model and our results can be put in several contexts. The asymptotic
results of \cite{GS}, \cite{BCG}, \cite{CT}, \cite{CGST} treat the model as an interacting particle
system in the Kardar--Parisi--Zhang (KPZ) universality class \cite{KPZ}, \cite{Corwin_KPZ}. In fact,
there is a limit transition \cite{BCG}, \cite{Ag_ASEP} from the stochastic six-vertex model to a
ubiquitous member of this class -- the Asymmetric Simple Exclusion Process (ASEP). There are two
further limits from the ASEP to stochastic partial differential equations: the first one leads to a
certain Gaussian field of fluctuations \cite{DPS},\cite{DG}, while the second one leads to the KPZ equation
itself \cite{BeGi}, \cite{ACQ}, \cite{SS}, \cite{BO}. However, in both cases the resulting SPDEs are
stochastic versions of a \emph{parabolic} PDE -- the heat equation, while in our limit regime we observe a \emph{hyperbolic} PDE with a stochastic term.

While the heat equation is closely related to Markov processes (indeed, the transition probabilities of the Brownian motion are given by the heat kernel), the telegraph equation \eqref{eq_intro_telegraph} is not. It provides the simplest instance of a non-Markovian evolution, and we refer to \cite{DH} for a review of its relevance in
physics. From the point of view of the approximation by the six-vertex model, the lack of
Markov property is a corollary of the fact that for a rarely turning path, it is important to know
not only its position, but also the direction in which it currently moves. Thus, in order to create
a Markov process, one would need to extend the state space so that the direction is also recorded; see \cite{P} for nice lectures about such \emph{random evolutions}.

For the six-vertex model with fixed (i.e., not changing with the mesh size) weights, there is a general
belief that the model should develop deterministic limit shapes as the mesh size goes to zero, see
\cite{PR}, \cite{Resh_lectures}. However, mathematical understanding or description of them remains a
major open problem. For special points in the space of parameters the model is equivalent
to dimer models, where the limit shape phenomenon is well understood, see \cite{CKP}, \cite{KO}. The approach that one uses in these cases is to develop \emph{variational principles}, identifying limit shapes with maximizers of a certain integral functional of the slope of the shape. As a corollary, the limit shape solves Euler--Lagrange equations for the variational problem, and these equations ordinarily are \emph{elliptic}. From this perspective, our hyperbolic PDE \eqref{eq_intro_telegraph} seems difficult to predict.

 In the
stochastic case of the six-vertex model with fixed weights \cite{BCG} computes the limit shape for the domain wall boundary conditions, and
\cite{GS}, \cite{RS} explain that, more generally, the limit shape has to satisfy a version of the inviscid Burgers
equation. The telegraph equation can be treated as a regularization of this equation (cf.\ inviscid
vs. viscous Burgers equation); in Remark \ref{Remark_back_to_six} below, we explain how the PDE of
\cite{RS} can be recovered as a limit of \eqref{eq_intro_telegraph}. One might be surprised that
while the six-vertex hydrodynamic equation of \cite{GS}, \cite{RS} does not look linear, \eqref{eq_intro_telegraph} is. The
explanation lies in the change of the unknown function $H(X,Y)\mapsto q^{H(X,Y)}$, which linearizes the
equation. A vague analogy would be with the Hopf-Cole transform, which identifies the exponentials of
solutions of the (non-linear) KPZ equation with solutions of the (linear, with multiplicative noise) stochastic heat equation.

The same observable $q^{H(X,Y)}$ plays an important role in \cite{CGST}, where a convergence of
the stochastic six-vertex model to the KPZ equation is proven via SPDE techniques (a one-point
distributional convergence in a similar limit regime was proved in \cite[Theorem 12.3]{BO} via a free fermionic reduction of \cite{Bor16}, and an SPDE convergence in a low-density regime for higher spin stochastic
vertex models was previously proved in \cite{CT}; see the introduction to \cite{CGST} for a more
complete bibliography of related works). The limit regime of \cite{CGST} is similar
to ours in the part that both address the case of \emph{weak asymmetry} in the stochastic six-vertex model, yet the two regimes yield very different limiting SPDE's. It would be interesting to try to find an interpolation between our results and those of \cite{CGST}.

\medskip

In the rest of the introduction, we give a precise definition of the stochastic six-vertex model, describe our limit regime, and list the asymptotic results. We further outline our results on the telegraph equation and its discrete version that, to our best knowledge, appear to be new.

\subsection{The dynamic stochastic six-vertex model}

\begin{figure}[t]
\begin{center}
{\scalebox{0.5}{\includegraphics{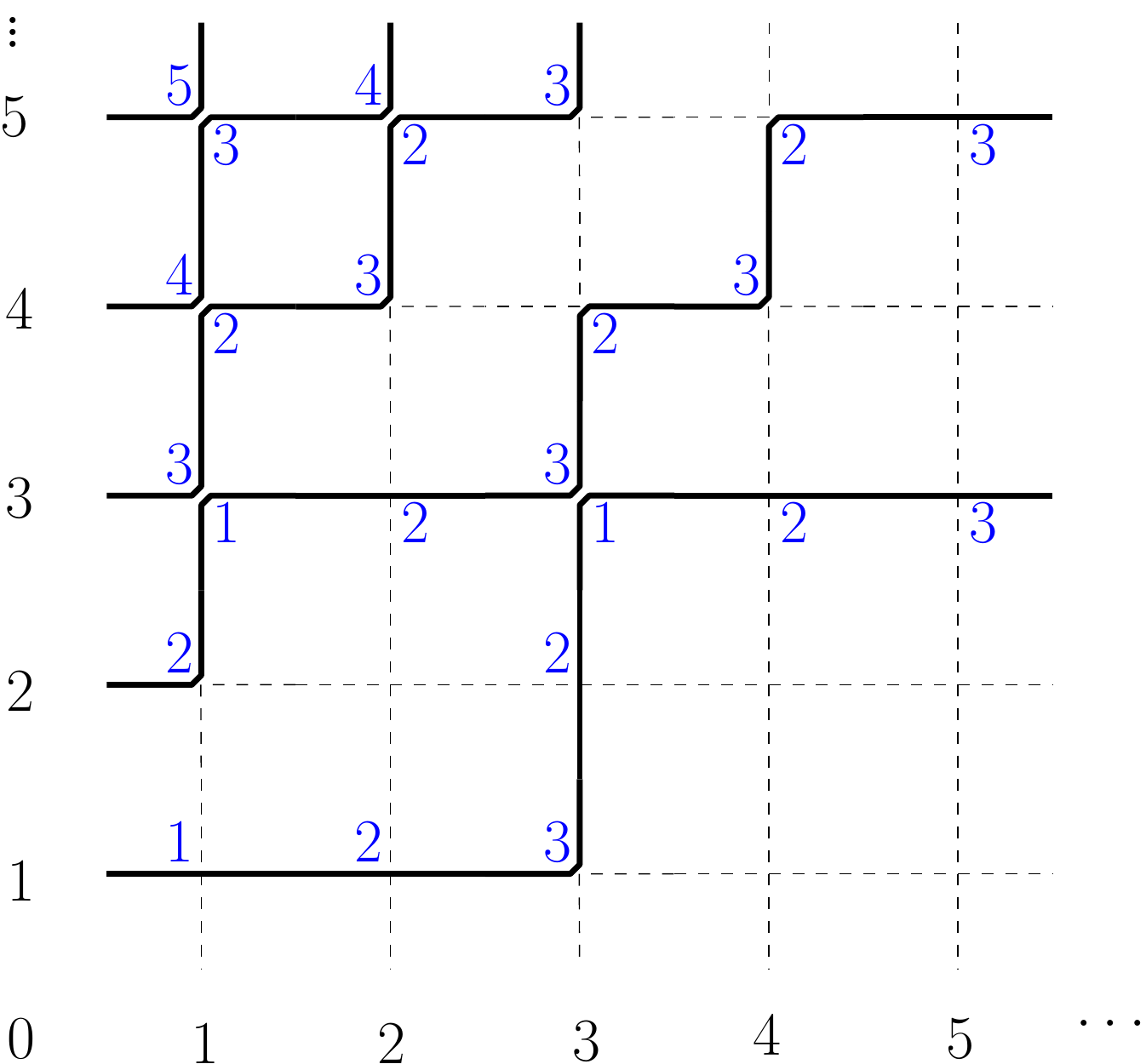}}}
 \caption{The function $d(x,y)$ defined along the paths in the dynamic stochastic six--vertex model. \label{Fig_path_height} }
\end{center}

\end{figure}

Our main object of study is the homogeneous stochastic six-vertex model of \cite{GS},\cite{BCG}
and its one-parameter deformation introduced as the \emph{dynamic stochastic six-vertex model} in
\cite{Bor_dyn}. Consider the configurations of the six-vertex model in positive quadrant. These are non-intersecting paths that are allowed to touch (see Figure
\ref{Fig_state_space}) or, equivalently, assignments of six types of vertices (see Figure \ref{Fig_weights}) to the integer points of the quadrant.

For some of our results, we focus on the domain wall boundary conditions, when the paths enter
 the quadrant through every point of its left boundary, see  Figure \ref{Fig_state_space}. For other results, we allow arbitrary deterministic boundary conditions (configurations of incoming paths) along the $x$ and $y$ axes.

A key tool of our approach is the height function $H(x,y)$. It has a local definition: We set
$H(1,0)=0$, declare that the height function is increased by $1$, $H(x,y+1)-H(x,y)=1$, whenever we
move up and the segment $[ (x-\frac12,y+\frac12),(x-\frac12,y+\frac32)]$ crosses a path, and it is decreased by
$1$, $H(x+1,y)-H(x,y)=-1$, whenever we move to the right and the segment
$[(x-\frac12,y+\frac12),(x+\frac12,y+\frac12)]$ crosses a path. The height function is constant in regions with no paths. One
way to think about the height function is that it is defined not at the integer points, but at the
half-integers -- centers of the faces of the square grid; then $H(x,y)$ corresponds to the point
$(x-\frac12,y+\frac12)$.\footnote{There is a slight asymmetry between $x$ and $y$ coordinates which we keep
to match the notations to those of previous works.} Figure \ref{Fig_state_space} shows an example.
For the domain wall boundary conditions, $H(x,y)$ counts the number of paths that pass through or
below  $(x,y)$. Formally, for $(x,y)\in\mathbb Z^2_{\ge 1}$, $H(x,y)$ is the total number of
vertices of types $II$, $III$ and $V$ at positions $(x,y')$ with $y'\le y$. We further extend
$H(x,y)$ to real $(x,y)$ first linearly in the $x$-direction, and then linearly in the $y$--direction. The
resulting function is monotone and $1$-Lipschitz in $x$ and $y$ directions.

\begin{figure}[t]
\begin{center}
{\scalebox{0.6}{\includegraphics{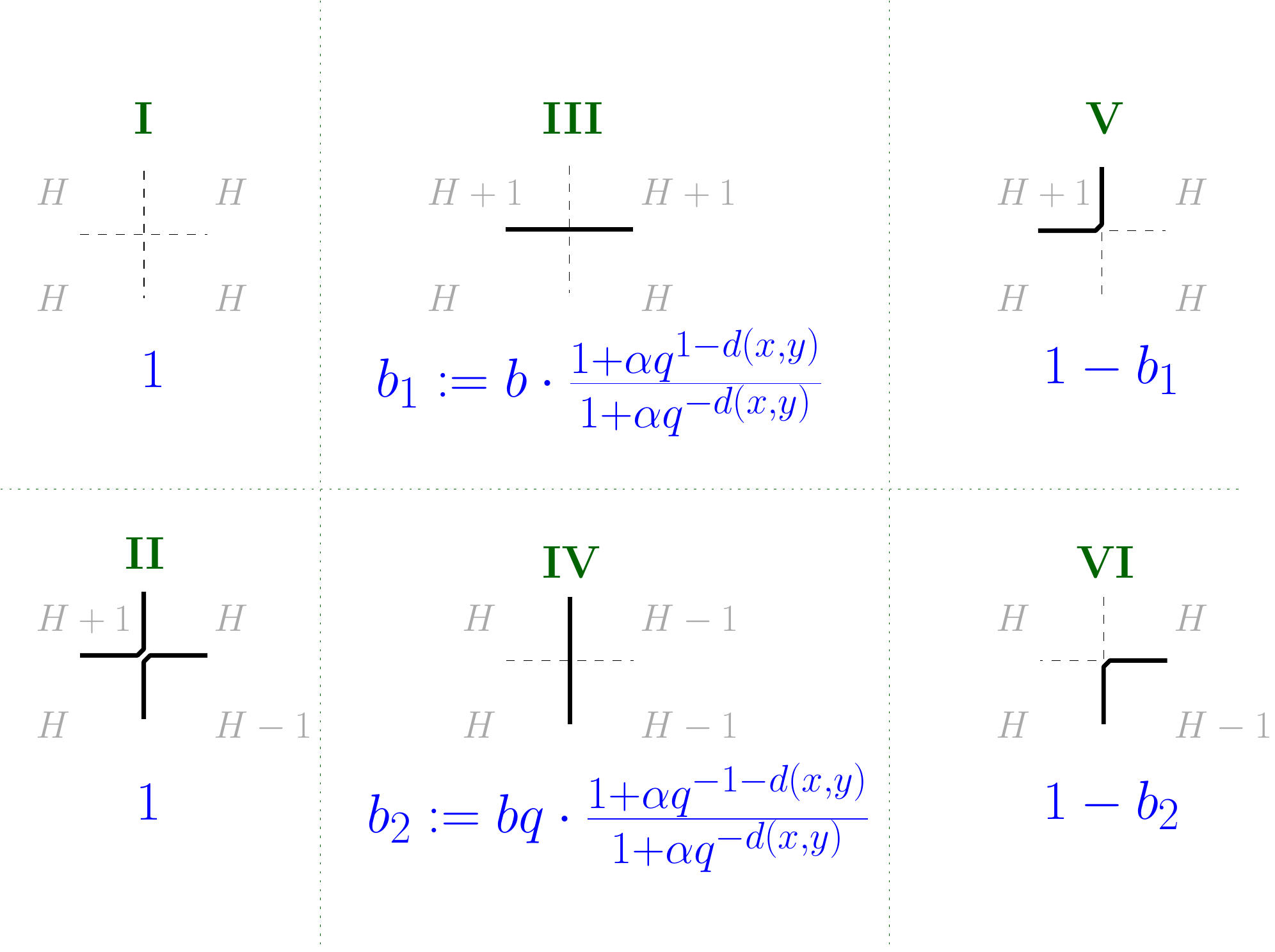}}}
 \caption{Weights of six types of vertices. Local changes of the height function $H(x,y)$ are shown in gray.
 \label{Fig_weights}}
\end{center}
\end{figure}

We also need a modified version of the height function defined through
\begin{equation}
d(x,y)=x-y-1+2H(x,y).
\end{equation}
When we move one step to the right, $d(x,y)$ increases by $1$ if we follow a path. When we move one
step up, it \emph{decreases} by $1$ if we follow a path. Therefore, along each path the height
changes piecewise-linearly, growing along the horizontal segments and decaying along the vertical ones. Note that
this rule is contradictory at points where two paths touch, as we will have two values of
$d(x,y)$ with difference $2$, cf.\  Figure \ref{Fig_path_height}. However, this is not important,
as we will never need the value of the function $d(x,y)$ at such points.

We now define the probability distribution on our path configurations. The random configuration is
obtained by a sequential construction from the bottom-left corner in the up-right direction, and
the vertices are sampled according to the probabilities in Figure \ref{Fig_weights}. The
probabilities depend on three fixed real parameters: $q>0$, $\alpha\ge 0$, $0<b<1$. The parameter $\alpha$ is sometimes referred to as the \emph{dynamic parameter}, according to the fact that for $\alpha\ne 0$ the weights of the model satisfy the \emph{dynamic}, or \emph{face} variant of the Yang-Baxter equation rather than the simpler vertex one. Following the conventional terminology of statistical physics, our probability distribution can be viewed as a stochastic (or Markovian) version of a two-dimensional exactly solvable IRF (Interaction-Round-a-Face) or SOS (Solid-On-Solid) model, cf. \cite{Bor_dyn}.
 At
$\alpha=0$, we return to the setting of the stochastic six-vertex model of \cite{BCG} with $b_1=b$,
$b_2=bq$.

\subsection{Limit regime and main asymptotic results}

In what follows, we take $L$ as a large parameter and set
\begin{equation}
\label{eq_limit_regime}
 b=\exp\left(-\frac{\beta_1}{L}\right), \qquad qb =\exp\left(-\frac{\beta_2}{L}\right),
 \quad \beta_1,\beta_2>0, \quad \beta_1\ne \beta_2.
\end{equation}
The parameter $\alpha\ge 0$ will remain fixed. In particular, if $\alpha=0$, then
$$
b_1=\exp\left(-\frac{\beta_1}{L}\right), \qquad b_2=\exp\left(-\frac{\beta_2}{L}\right).
$$
 Further, we
consider the limit $L\to\infty$, and it is sometimes convenient to use alternative parameters $\q$
and $\s$ defined by
\begin{equation}
\label{eq_limit_regime_2}
  q=\q^{1/L}, \quad  \ln(\q)=\beta_1-\beta_2, \quad
  \s=\lim_{L\to\infty} \frac{1-b}{1-bq}=\frac{\beta_1}{\beta_2}.
\end{equation}
We will sometimes switch between $\beta_1,\beta_2$ notations and $\q$, $\s$ notations to make
formulas more aesthetically pleasing. We will always assume $\beta_1\ne \beta_2$, which implies
$\q,\s\ne 1$.

\medskip

We prove the following results.

\begin{enumerate}

\item For the domain wall boundary conditions and any $\alpha\ge 0$, we develop in Theorems
    \ref{Theorem_LLN}, \ref{Theorem_CLT} the Law of Large Numbers for the height function
    $H(x,y)$ and the Central Limit Theorem for its centered and rescaled fluctuations. The
    relevant limit quantities are given as contour integrals, and the proofs are based on exact expressions for the
    expectation of shifted $q$-moments of the height function $H(X,Y)$. We rely on several
    ingredients -- contour integral expressions of \cite{Bor_dyn}, a Gaussianity lemma for random
    variables with moments given by contour integrals of \cite{BG_GFF}, and a novel
    combinatorial argument of Theorem \ref{theorem_shifted_cumulants} linking cumulants with
    their shifted versions.

\item For \emph{arbitrary} (deterministic) boundary conditions in the case $\alpha=0$, we prove in Theorem
    \ref{Theorem_LLN_general} the Law of Large Numbers by showing that $q^{H(x,y)}$ converges in
    probability to the solution of the telegraph equation \eqref{eq_intro_telegraph} with
    $u(x,y)\equiv 0$ and prescribed boundary values along the lines $x=0$ and $y=0$. The proof is based
    on a novel stochastic four point relation of Theorem \ref{Theorem_4_point} for
    $q^{H(x,y)}$. This relation does not seem to be present in the existing literature but, once written, its proof is immediate from the definition of the model. It can also be derived from the duality relations of \cite[(2.6)]{CP}, \cite[Proposition 2.6]{CT}, \cite[Corollary 3.4]{CGST}.
    We were led to this relation by \cite{W} that provided different derivations of its averaged version.

 \item For \emph{arbitrary} (deterministic) boundary conditions in the $\alpha=0$ case, we present
     the  Central Limit Theorem for $q^{H(x,y)}$ in Theorem
     \ref{Conjecture_general_CLT}. The answer is given by the stochastic telegraph equation
    \eqref{eq_intro_telegraph}, \eqref{eq_intro_stoch} with the variance of the white noise
    $v(x,y)$ being a \emph{non-linear} function of the limiting profile for $q^{H(x,y)}$ afforded by the Law
    of Large Numbers. The proof again exploits the four point relation of Theorem
    \ref{Theorem_4_point}.

\item We investigate the \emph{low density} boundary conditions (which means that there are few paths
    entering through the boundary; their locations are still deterministic, but they are changing
    as $L\to\infty$; the distinction with previous results is that in points 2 and 3 the average density of incoming paths was positive, while here it tends to 0), in the case $\alpha=0$, which has an interpretation through evolution of a family of independent persistent random walks.
     We prove in Theorem
    \ref{Theorem_LLN_CLT_low} the Law of Large Numbers and Central Limit Theorem for the properly centered and scaled $H(x,y)$. The answer is still given by the stochastic telegraph equation
    \eqref{eq_intro_telegraph}, \eqref{eq_intro_stoch}, but the variance of the white noise
    $v(x,y)$ becomes a \emph{linear} function of the limiting profile.

\end{enumerate}

 In the first version of this text the Central Limit Theorem of $(3)$ was presented as a conjecture with two heuristic arguments in favor of its validity. Later on, \cite{ST} proved the conjecture by combining the four point relation with certain new ideas. This prompted us to return to our original heuristic approaches, and we were eventually able to turn one of them into a complete proof (different from the one in \cite{ST}). It is this proof that is presented in Section \ref{Section_CLT_conjecture} below; the second heuristic approach has been moved to an appendix.

\subsection{The classical telegraph equation and its discretization} As many of our results are based on the
analysis of the telegraph equation \eqref{eq_intro_telegraph} and its discrete counterpart encoded in the
four point relation of Theorem \ref{Theorem_4_point}, we need some information about its solutions.
There is a classical part here (see, e.g., \cite{Courant}) -- existence/uniqueness of the solutions
to hyperbolic PDEs and an integral representation of the  solutions through the \emph{Riemann
function} of the equation. We review this part at the beginning of Section \ref{Section_PDEs}. We further demonstrate
in Theorem \ref{Theorem_discrete_PDE_through_integrals} that the discrete version of the telegraph
equation admits a similar theory, with the Riemann function replaced by an appropriate discrete
analogue. This greatly simplifies the proofs, as the convergence of the discretization to the
telegraph equation itself becomes a corollary of the convergence for the (explicit) Riemann functions.

\smallskip

Motivated by the fact that we obtained the telegraph equation from a stochastic system of non-intersecting paths, we further develop a theory for the representations of its solutions as path integrals. This may be viewed as an analogue of the Feynman-Kac formula for the parabolic equations. For the homogeneous equation \eqref{eq_intro_telegraph} with $u(x,y)\equiv 0$, such a theory was previously known -- \cite{Gold}, \cite{Kac}, see also \cite{P}, explain that a solution at $(x,y)$ can be represented as an expectation of the boundary data at the point where a \emph{persistent Poisson random walk} started at $(x,y)$ exits the quadrant, see Theorem \ref{Theorem_telegraph_as_expectation} for the exact statement.

For the inhomogeneous equation we find a stochastic representation (that we have not seen before) in terms of \emph{two} persistent Poisson random walks. The additional term is the integral of the right-hand side $u(X,Y)$ over the domain between two (random) paths with sign depending on which path is higher. We refer to Theorem \ref{Theorem_telegraph_as_expectation} for more details.

In addition, we develop, in Theorems \ref{Theorem_discrete_as_expectation_boundary}, \ref{Theorem_discrete_as_expectation_noise}, a stochastic representation for the solutions of the discretization of the telegraph equation. The result is similar: one needs to launch a random walk from the observation point and compute the expectation at the exit point to get the influence of the boundary data, and one needs to sum the inhomogeneity of the equation over the domain between trajectories of two random walks. The needed random walk combinatorially is the same path of the six-vertex model, but with flipped stochastic weights, as in Figure \ref{Fig_weights_rev}.

\subsection*{Acknowledgements} We are very grateful to M.~A.~Borodin for bringing the telegraph equation and its treatment in \cite{Courant}
to our attention. We would like to thank I.~Corwin and H.~Spohn for very helpful discussions,  P.~Diaconis for pointing us to the lectures \cite{P}, M.~Wheeler for the discussions which eventually led us to the discovery of the four--point relation,  H.~Shen and L.-C.~Tsai for telling us about their work \cite{ST}, and three referees for the careful proofreading of the manuscript.
Both authors were partially supported by the NSF grant DMS-1664619.
A.B.~was partially supported by the NSF grant  DMS-1607901. V.G.~was partially supported by the NEC Corporation Fund for Research in Computers and Communications and by the Sloan Research Fellowship.

\section{The domain wall boundary conditions}

\label{Section_domain_wall}

In this section we focus on the domain wall boundary conditions: the paths enter at every integer
point of the $y$--axis and no paths enter through the $x$--axis, as in Figures
\ref{Fig_state_space}, \ref{Fig_path_height}. We prove the Law of Large Numbers and the Central
Limit Theorem for the height function.

\subsection{Formulation of LLN and CLT}

\begin{theorem} \label{Theorem_LLN}
 For each $\alpha\ge 0$, in the limit regime \eqref{eq_limit_regime} we have
 $$
  \lim_{L\to\infty} \frac{1}{L}H(Lx,Ly)=\mathbf h(x,y), \qquad \text{(convergence in
 probability)}
 $$
 where $\mathbf h(x,y)$ is the only real (deterministic) solution of
 \begin{multline}
 \label{eq_LLN}
 \frac{\bigl(\q^{-\mathbf h(x,y)} \q^{y-x}+\alpha^{-1}
    \bigr)
    \bigl(\q^{\mathbf h(x,y)}-1\bigr)}{1+\alpha^{-1}}
  \\  =\frac{1}{2\pi \ii} \oint_{-1} \exp\left(
      \ln(\q)\left(- x \frac{\s z}{1+ \s z }   + y\frac{z}{1+z} \right)\right) \frac{d z}{z} ,
 \end{multline}
 with integration in positive direction around the singularity at $-1$ and avoiding the singularities at $0$ and $-\frac{1}{\s}$. At $\alpha=0$ the left--hand side of \eqref{eq_LLN} becomes $\q^{\mathbf h(x,y)}-1$.
\end{theorem}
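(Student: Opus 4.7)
The left-hand side of \eqref{eq_LLN} is precisely the natural deterministic limit of the single observable
\[
 M(X,Y):=\frac{\bigl(q^{-H(X,Y)}q^{Y-X}+\alpha^{-1}\bigr)\bigl(q^{H(X,Y)}-1\bigr)}{1+\alpha^{-1}}
\]
evaluated at $(X,Y)=(Lx,Ly)$, assuming (as we want to prove) that $q^{H(Lx,Ly)}=\q^{H(Lx,Ly)/L}$ concentrates on $\q^{\mathbf h(x,y)}$. My plan has three steps: (a) write down an exact single-contour-integral formula for $\E[M(X,Y)]$, obtained as the $k=1$ specialization of the \emph{shifted dynamic $q$-moment} formulas of \cite{Bor_dyn} for the domain-wall six-vertex model; (b) take its $L\to\infty$ limit and match it to the RHS of \eqref{eq_LLN}; (c) control the variance of $M(Lx,Ly)$ so that convergence of expectations upgrades to convergence in probability; then invert $M\mapsto\mathbf h$ by monotonicity.

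Step (a) is direct: the observable $M$ is tailored to the shift and normalization of Borodin's dynamic moments, which is why the $(1+\alpha^{-1})^{-1}$ prefactor and the $q^{Y-X}$ twist appear exactly where they do. For step (b) the substitutions $b=\exp(-\beta_1/L)$, $qb=\exp(-\beta_2/L)$, $q=\q^{1/L}$, $(X,Y)=(Lx,Ly)$ turn the $(X,Y)$-dependent part of the integrand into
\[
 \exp\!\left(\ln(\q)\Bigl(-x\,\tfrac{\s z}{1+\s z}+y\,\tfrac{z}{1+z}\Bigr)+o(1)\right),
\]
while the remaining $L$-independent factors collapse to $dz/z$ and the contour (enclosing $-1$ and avoiding $0$ and $-1/\s$) remains admissible as $L\to\infty$. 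Dominated convergence along this fixed contour then delivers the RHS of \eqref{eq_LLN}.

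For step (c) I would apply the same formulas of \cite{Bor_dyn} at $k=2$ with both evaluation points equal to $(Lx,Ly)$, obtaining a nested double-contour representation of $\E[M\cdot\widetilde M]$ for $M$ and a suitably twisted companion $\widetilde M$. In the $q\to 1$ regime the nesting cross-factor $(qz_1-z_2)/(z_1-z_2)$ degenerates to $1$ uniformly away from the diagonal, so the double integral reduces to the square of the single one plus an $O(1/L)$ diagonal residue; converting between products of shifted $q$-moments and products of the observables $M$ is exactly what the combinatorial identity of Theorem \ref{theorem_shifted_cumulants} is designed for. This yields $\mathrm{Var}(M(Lx,Ly))\to 0$. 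A short calculus check that $t\mapsto(\q^{-t}\q^{y-x}+\alpha^{-1})(\q^{t}-1)/(1+\alpha^{-1})$ is strictly monotone on the physically relevant range $t\in[0,y]$ afforded by the $1$-Lipschitz monotonicity of $H/L$ (separating $\q>1$ and $\q<1$) then inverts the limit of $M(Lx,Ly)$ to the unique real root $\mathbf h(x,y)$ of \eqref{eq_LLN}, finishing the proof.

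The main obstacle I foresee is the variance bound in step (c). The one-point formula yields its limit essentially by inspection, but the two-point formula involves nested contours whose geometry must be tracked as $q\to 1$ (they can pinch), and extracting the $O(1/L)$ diagonal correction from the off-diagonal bulk is delicate. This is precisely where the shifted-cumulants identity and the Gaussianity lemma of \cite{BG_GFF} — flagged in the introduction as proof ingredients — come into play; they are overkill for the LLN alone but constitute the natural setup for the CLT (Theorem \ref{Theorem_CLT}) that follows, from which the second-moment estimate needed here can be read off as the first nontrivial cumulant bound.
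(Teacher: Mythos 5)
Your plan mirrors the paper's own proof: compute $\lim_{L\to\infty}\E[\mathcal O(Lx,Ly)]$ from the $N=1$ case of the contour-integral formula (Theorem~\ref{Theorem_observable}), use the $N=2$ case to show the variance of $\mathcal O(Lx,Ly)$ tends to zero, and invert the strictly monotone Lipschitz map $\mathcal O \leftrightarrow H/L$. (Your $M$ and the paper's $\mathcal O$ are the same up to an affine transformation and a shift by $1$ in $x$ that vanishes in the limit.) One correction on a detail in step (c): Theorem~\ref{theorem_shifted_cumulants} is stated for $n>2$ and is not what the paper uses to turn $E_2(Lx,Ly)-E_1(Lx)E_1(Ly)$ into $\mathrm{Var}(\mathcal O)$; that conversion is an elementary direct expansion (done in the paper at~\eqref{eq_x2} and the displays that follow), keeping track of the $O(1-q)$ terms coming from the $q^{k-1}$ twists in the second factor of $E_2$. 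The shifted-cumulants identity is reserved for the Gaussianity argument (higher-order cumulants) in Theorem~\ref{Theorem_CLT}; invoking it here would be both out of scope for the stated hypothesis $n>2$ and unnecessary.
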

\begin{remark}
\label{Remark_DW_in_betas}
 In terms of $\beta_1$ and $\beta_2$, the right--hand side of \eqref{eq_LLN} can be rewritten as
 \begin{equation}
 \label{eq_LLN_betas}
   \frac{1}{2\pi \ii} \oint_{-\beta_1} \exp\left(
      (\beta_1-\beta_2)\left(- x \frac{z}{\beta_2+  z }   + y\frac{z}{\beta_1+z} \right)\right) \frac{d z}{z}
 \end{equation}
 with a positively oriented integration contour encircling $z=-\beta_1$, but not $-\beta_2$ or
 $0$.
\end{remark}

\begin{proposition} \label{Proposition_q_0}
 In the setting of Theorem \ref{Theorem_LLN} with $\alpha=0$, consider the limit $\q\to 0$ with fixed value of $\s<1$. Then
 \begin{equation}
 \label{eq_BCG_LLN}
  \lim_{\q\to 0} \mathbf h(x,y)=\begin{cases}0, & \frac{x}{y}>\s^{-1},\\ \dfrac{ (\sqrt{\s
 x}-\sqrt{y})^2}{1-\s}, & \s  \le \frac{x}{y}\le \s^{-1}\\ y-x, & \frac{x}{y}<\s.  \end{cases}
 \end{equation}
\end{proposition}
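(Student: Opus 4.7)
The plan is to extract the $\q\to 0$ asymptotics of the contour integral in \eqref{eq_LLN} (with $\alpha=0$) by a combination of residue computation and saddle-point analysis. The first step is the M\"obius substitution $v=(1+\s z)/(1+z)$, with inverse $z=(1-v)/(v-\s)$. A short calculation gives $\ln(\q)\,f(z)=-\tilde\lambda(y+x\s)+\tilde\lambda\,(yv+x\s/v)$ where $\tilde\lambda:=-\ln(\q)/(1-\s)$, together with $dz/z=(1-\s)\,dv/((v-\s)(v-1))$; the small positively-oriented $z$-contour around $-1$ maps to a large clockwise $v$-contour enclosing the three singularities $\{0,\s,1\}$ of the new integrand. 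Evaluating the elementary residues at $v=\s$ and $v=1$ and rearranging produces the identity
\[
\q^{\mathbf h(x,y)}=\q^{y-x}-R,\qquad R:=(1-\s)\,e^{-\tilde\lambda(y+x\s)}B,
\]
where $B=\frac{1}{2\pi\ii}\oint_{v=0}\frac{\exp(\tilde\lambda\phi(v))}{(v-\s)(v-1)}dv$ with $\phi(v):=yv+x\s/v$.

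Next I would analyze $B$ as $\tilde\lambda\to+\infty$. Setting $v_+=\sqrt{x\s/y}$ (the positive saddle of $\phi$) and $T=2\tilde\lambda\sqrt{xy\s}=\tilde\lambda\phi(v_+)$, the Jacobi--Anger expansion gives $\exp(\tilde\lambda\phi(v))=\sum_{n\in\mathbb Z}I_n(T)\,(v/v_+)^n$. Combining this with the partial-fraction decomposition of $1/((v-\s)(v-1))$, the residue at $v=0$ becomes an explicit Bessel series $B=(1-\s)^{-1}[S(r_1)-S(r_2)]$, where $S(r):=\sum_{k\ge 1}I_k(T)r^k$, $r_1:=\sqrt{x/(y\s)}$, and $r_2:=\sqrt{x\s/y}$. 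Elementary algebra checks the key relations $T(r_1+1/r_1)/2=\tilde\lambda(x+y\s)$ and $T(r_2+1/r_2)/2=\tilde\lambda(y+x\s)$. Applying $I_k(T)\sim e^T/\sqrt{2\pi T}$ (valid for fixed $k$ as $T\to\infty$) together with the identity $\sum_{k\ge 1}I_k(T)r^k=e^{T(r+1/r)/2}-\sum_{k\ge 0}I_k(T)r^{-k}$ yields
\[
S(r)\sim \frac{e^T}{\sqrt{2\pi T}}\cdot\frac{r}{1-r}\quad (r<1), \qquad S(r)\sim e^{T(r+1/r)/2}\quad (r>1).
\]

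The three cases of the proposition match the three sign-patterns of $(r_1-1,r_2-1)$: using $r_2<1\iff x/y<1/\s$ and $r_1>1\iff x/y>\s$, one has $(r_1,r_2)\in(0,1)^2$ in Region~3, $r_1>1>r_2$ in Region~2, and $(r_1,r_2)\in(1,\infty)^2$ in Region~1. Substituting into the formula for $B$ and using $e^{-\tilde\lambda(y+x\s)+T}=\q^\Phi$ with $\Phi=(\sqrt y-\sqrt{x\s})^2/(1-\s)$, and $e^{\tilde\lambda(x+y\s)-\tilde\lambda(y+x\s)}=\q^{y-x}$, a short computation gives: in Region~3, $R\sim (\text{const})\cdot\q^\Phi/\sqrt{\tilde\lambda}$, and since the algebraic identity $\Phi-(y-x)=(\sqrt x-\sqrt{y\s})^2/(1-\s)$ makes $\Phi>y-x$ in this region, $R\ll\q^{y-x}$ and so $\q^{\mathbf h}\sim\q^{y-x}$, giving $\mathbf h\to y-x$; in Region~2, $R=\q^{y-x}-\frac{r_2/(1-r_2)}{\sqrt{2\pi T}}\q^\Phi$, so $\q^{\mathbf h}=\frac{r_2}{(1-r_2)\sqrt{2\pi T}}\q^\Phi$ is of order $\q^\Phi$, giving $\mathbf h\to\Phi$; and in Region~1, $R=\q^{y-x}-1+o(1)$, so $\q^{\mathbf h}\to 1$ and $\mathbf h\to 0$. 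Continuity across the boundaries $x/y\in\{\s,\s^{-1}\}$ is automatic, since $\Phi=y-x$ when $x=\s y$ and $\Phi=0$ when $y=\s x$.

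The principal subtlety is controlling the cancellation in Region~2, where the leading part of $R$ (coming from the $r_1>1$ branch via $S(r_1)\sim e^{\tilde\lambda(x+y\s)}$) exactly matches $\q^{y-x}$; the limit is then read off from the $r_2<1$ subleading correction, whose explicitly positive coefficient $r_2/(1-r_2)>0$ is what guarantees $\q^{\mathbf h}>0$. Making the Bessel asymptotics uniform enough to justify these cancellations on compact subsets of each open region is standard but requires some care, as does verification of the remainders from the $\sum_{k\ge 0}I_k(T)r^{-k}$ piece used to evaluate $S(r)$ for $r>1$.
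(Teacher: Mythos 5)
Your proposal is correct and takes a genuinely different route from the paper. The paper restricts first to the middle region $\s\le x/y\le\s^{-1}$ and applies the steepest descent method directly to the $z$-integral, locating the saddle $z_c$ and selecting the correct branch by the sign of the second derivative; the two frozen regions $x/y>\s^{-1}$ and $x/y<\s$ are then handled by a separate argument (combinatorics of the height function, or equivalently the observation that residues at $0$ or $\infty$ dominate once the contour is deformed through the saddle). You instead perform the M\"obius substitution $v=(1+\s z)/(1+z)$, which sends the three singularities $\{0,-1,-\s^{-1}\}$ to $\{1,\infty,0\}$ and turns the exponent into the generating function of the modified Bessel functions $I_n$. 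Extracting the elementary residues at $v=\s$ and $v=1$ produces the exact closed relation $\q^{\h}=\q^{y-x}-R$, and the remaining essential singularity at $v=0$ is expanded via Jacobi--Anger into a Bessel series. The trichotomy $\{r_1\lessgtr 1,\, r_2\lessgtr 1\}$ then plays the same role as the location of the saddle relative to the contour, but all three regions are treated inside a single analytic framework. The algebraic identities you state ($\ln(\q)f(z)=-\tilde\lambda(y+x\s)+\tilde\lambda(yv+x\s/v)$, $dz/z=(1-\s)\,dv/((v-\s)(v-1))$, the values of $T(r_i+1/r_i)/2$, and $\Phi-(y-x)=(\sqrt x-\sqrt{y\s})^2/(1-\s)$) all check out, and the orientation reversal under the M\"obius map (small CCW circle around $z=-1$ becoming a large CW $v$-circle enclosing $\{0,\s,1\}$) is correctly accounted for.

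One small bookkeeping point: in Region~2, the subleading contribution to $R$ comes not only from the $r_2<1$ tail $S(r_2)\sim \frac{e^T}{\sqrt{2\pi T}}\frac{r_2}{1-r_2}$ but also from the correction $-\frac{e^T}{\sqrt{2\pi T}}\frac{r_1}{r_1-1}$ inside $S(r_1)$, which is of the same order $\q^\Phi/\sqrt T$. So the correct asymptotics is $\q^{\h}\sim\frac{\q^\Phi}{\sqrt{2\pi T}}\bigl(\frac{r_1}{r_1-1}+\frac{r_2}{1-r_2}\bigr)$ rather than just the $r_2/(1-r_2)$ piece you wrote. Since both terms in the bracket are positive for $r_1>1>r_2$, the conclusion $\h\to\Phi$ is unaffected, but for the paper's readers it would be worth recording the full coefficient. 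As you note, the remaining work -- making the Bessel asymptotics uniform enough to justify the cancellations (especially in Region~1, where $\q^{y-x}\to\infty$ and the error terms must decay like $\q^{\Phi}/\sqrt T$ with $\Phi>0$) -- is a standard exercise in uniform asymptotics of Bessel sums, comparable in effort to the saddle-point justification the paper itself leaves terse.
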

Note that the right--hand side of \eqref{eq_BCG_LLN} is precisely the limit shape of the stochastic
six--vertex model in the asymptotic regime of fixed $q$ as $L\to\infty$, as obtained in
\cite[Theorem 1.1]{BCG}.

\bigskip

Let us apply the differential operator $f\mapsto f_{xy}+\beta_1 f_y+\beta_2 f_x$ to
\eqref{eq_LLN_betas}. We can differentiate under the integral sign, which gives
 \begin{multline}
 \label{eq_tele_for_contour}
   \frac{1}{2\pi \ii} \oint_{-\beta_1}\frac{d z}{z}  \exp\left(
      (\beta_1-\beta_2)\left(- x \frac{z}{\beta_2+  z }   + y\frac{z}{\beta_1+z} \right)\right)
 \\ \times \left[ -(\beta_1-\beta_2)^2 \frac{z}{\beta_1+z} \cdot \frac{z}{\beta_2+z}+\beta_1(\beta_1-\beta_2) \frac{z}{\beta_1+z}-\beta_2(\beta_1-\beta_2) \frac{z}{\beta_2+z} \right]=0.
 \end{multline}
This shows that a functional of the limit shape (which is $\q^{\h(x,y)}$ in $\alpha=0$ case and the
left-hand side of \eqref{eq_LLN} for general $\alpha$) satisfies the equation $f_{xy}+\beta_1
f_y+\beta_2 f_x=0$, which is a variant of the telegraph equation, cf.\ e.g.\ \cite{Courant}. In
Section \ref{Section_LLN_4p} we upgrade the Law of Large Numbers at $\alpha=0$ to general boundary
conditions and prove that the link to the telegraph equation persists.

\bigskip

For a point $(x,y)\in \mathbb{Z}_{>0}\times \mathbb{Z}_{>0}$ define
\begin{equation}
\label{eq_O}
 \O(x,y)=-\alpha^{-1} q^{H(x,y)} + q^{y-x+1-H(x,y)}.
\end{equation}

\begin{theorem} \label{Theorem_CLT}
 Fix $k\in\mathbb Z_{>0}$ and reals $y>0$ and $x_1\ge x_2 \ge\dots\ge
x_k>0$.  For each $\alpha\ge 0$, in the limit regime \eqref{eq_limit_regime}  the random variables
$$
 \frac{H(Lx_i,Ly)-\E H(Lx_i,Ly)}{\sqrt{L}}, \quad i=1,\dots,k,
$$
converge as $L\to\infty$ (in the sense of moments) to a centered Gaussian vector. The asymptotic
covariance is given in terms of $\O(x,y)$ by
\begin{multline}
\label{eq_covariance_integral}
 \lim_{L\to\infty} L \frac{\mathrm{Cov}(\O(Lx_1,Ly), \O(L x_2,Ly))}{(1+\alpha^{-1})^2}\\=
\frac{\ln(\q) }{(2\pi \ii)^2} \oint_{-1} \oint_{-1} \frac{z_1}{z_1-z_2}
\prod_{i=1}^{2}\left[\exp\left(
      \ln(\q)\left(- x_i \frac{\s z_i}{1+ \s z_i }   + y\frac{z_i}{1+z_i} \right)\right) \frac{d z_i}{z_i}\right]
      \\+\frac{\ln(\q)}{2\pi\ii} \oint_{-1}  \exp\left(
      \ln(\q)\left(- x_1 \frac{\s z}{1+ \s z}   + y\frac{z}{1+z} \right)\right) \frac{d z}{z}
      \\ \times  \frac{1}{1+\alpha^{-1}}\left[ \q^{y-x_2}+\alpha^{-1}+ \frac{1}{2\pi\ii} \oint_{-1}  \exp\left(
      \ln(\q)\left(- x_2 \frac{\s z}{1+ \s z}   + y\frac{z}{1+z} \right)\right) \frac{d z}{z}
     \right],
\end{multline}
where $x_1\ge x_2$, positively oriented integration contours enclose $-1$, but not $0$ or $-\frac{1}{\s}$, and
for the first integral the $z_1$--contour is inside the $z_2$--contour.
 If $\alpha=0$, then
 \begin{multline}
\label{eq_covariance_integral_zero}
 \lim_{L\to\infty} L \mathrm{Cov}(q^{H(Lx_1,Ly)}, q^{H(Lx_2,Ly)})\\=
\frac{\ln(\q)}{(2\pi \ii)^2} \oint_{-1} \oint_{-1} \frac{z_1}{z_1-z_2}
\prod_{i=1}^{2}\left[\exp\left(
      \ln(\q)\left(- x_i \frac{\s z_i}{1+ \s z_i }   + y\frac{z_i}{1+z_i} \right)\right) \frac{d z_i}{z_i}\right]
      \\+\frac{\ln(\q)}{2\pi\ii} \oint_{-1}  \exp\left(
      \ln(\q)\left(- x_1 \frac{\s z}{1+ \s z}   + y\frac{z}{1+z} \right)\right) \frac{d z}{z}, \quad x_1\ge x_2,
\end{multline}
with similar integration contours.
\end{theorem}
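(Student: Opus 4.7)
The plan is to derive the joint CLT for the observables $\O(Lx_i, Ly)$ from exact contour integral formulas for their shifted $q$-moments, and then transfer this to $H$ itself via the linearisation $\q^{H/L} \approx 1 + (\ln\q) H/L$ available in our limit regime. The three main ingredients are the contour integral formulas of \cite{Bor_dyn} for joint expectations of $q$-shifted exponentials of $H$ under the dynamic stochastic six-vertex measure, the shifted-cumulants identity of Theorem \ref{theorem_shifted_cumulants} converting these shifted moments into ordinary joint cumulants of the $\O(x_i,y)$, and the Gaussianity lemma of \cite{BG_GFF} for random variables with cumulants given by contour integrals.

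Concretely, I would first write the $k$-point shifted $q$-moment as a $k$-fold nested contour integral with integrand $\prod_i F(z_i)^L \cdot \prod_{i<j}\frac{z_i-\q z_j}{z_i-z_j}$, where $F(z)$ is essentially the single-variable exponential already appearing in the right-hand side of \eqref{eq_LLN} and contours enclose $-1$ but avoid $0$ and $-1/\s$. Applying Theorem \ref{theorem_shifted_cumulants} rewrites the $k$-th joint cumulant of the $\O(x_i,y)$ as a similar $k$-fold integral in which the Cauchy kernel now carries $(k-1)$ factors of the form $\q-1 = O(L^{-1})$. Combined with uniform bounds on the integrand (via steepest descent considerations analogous to those already used for Theorem \ref{Theorem_LLN}), this shows that the $k$-th cumulant of any linear combination $\sum_i c_i \O(Lx_i, Ly)$ is of order $L^{1-k}$, so after the $\sqrt L$ rescaling in the theorem only the second cumulant survives and the method of moments delivers the Gaussian limit. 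Since $\O$ is a smooth non-degenerate function of $H$ in the limit regime (its derivative in $H$ is $-\ln(\q)/L$ times a strictly positive quantity for $\alpha > 0$, and analogously for $\q^H$ itself when $\alpha = 0$), the Gaussian limit transfers directly from $\O$ to $H$.

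For the covariance itself, the $k = 2$ cumulant integral has its $z_1$-contour inside the $z_2$-contour. The two types of terms in \eqref{eq_covariance_integral} — the principal double integral and the single integral multiplied by the bracket involving $\q^{y-x_2}+\alpha^{-1}$ — arise, respectively, from replacing $(z_1 - \q z_2)/(z_1 - z_2)$ by $z_1/(z_1 - z_2)$ to leading order in $\ln\q$, and from the correction contributed when the shift in Theorem \ref{theorem_shifted_cumulants} interacts with the affine piece of $\O$; the latter produces exactly the subtraction $\E[\O(x_1,y)]\,\E[\O(x_2,y)]$ needed to pass from second moment to second cumulant, which explains why the bracket in \eqref{eq_covariance_integral} reproduces the single-point expectation from \eqref{eq_LLN}. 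At $\alpha = 0$ this bracket collapses to the pure single contour integral of \eqref{eq_covariance_integral_zero}. The main obstacle I expect is the bookkeeping in the cumulant conversion: matching the shift in Theorem \ref{theorem_shifted_cumulants} against the $\alpha$-dependent structure of $\O$, and in particular handling the asymmetric assumption $x_1 \ge x_2$, which forces the specific nesting $z_1 \subset z_2$ and requires careful tracking of residues when contours are swapped. The higher-moment steepest-descent bounds are standard but tedious, and the precise form of the single-integral correction is sensitive to the exact conventions used in \cite{Bor_dyn}.
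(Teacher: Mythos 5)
Your plan matches the paper's proof in all essential respects: you invoke the same three ingredients (the contour-integral moment formulas of \cite{Bor_dyn} via Theorem \ref{Theorem_observable}, the shifted-cumulants identity of Theorem \ref{theorem_shifted_cumulants}, and the BG\_GFF Gaussianity lemma for nested contour-integral cumulants), you compute the covariance from the $N=2$ observable exactly as in Section \ref{Section_covariance_integral}, and you correctly identify that the single-integral correction in \eqref{eq_covariance_integral} traces to the $q^{k-1}$ shifts in the observable. The only difference is presentational: the paper first bounds the shifted cumulant $C'_n$ via \cite[Lemma 4.2]{BG_GFF} and then applies Theorem \ref{theorem_shifted_cumulants} with $\eps=L^{-1/2}$ after expanding $\O = \O_\infty + L^{-1/2}\Delta\O$, whereas you describe the two steps in the reverse order and replace the black-box lemma by direct steepest-descent estimates, but this is the same argument.
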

\begin{remark}
 Expanding
 \begin{multline*}q^{H(Lx,Ly)}=q^{\E H(Lx,Ly)} \biggl(1+\ln(q) \bigl(H(Lx,Ly)-\E H(Lx,Ly)\bigr)\\ + \bigl(\ln(q)\bigr)^2 \frac{\bigl(H(Lx,Ly)-\E H(Lx,Ly)\bigr)^2}{L^2}+\dots\biggr),
 \end{multline*}
and noticing that $\ln(q)$ is of order $L^{-1}$, one can derive the covariance of $H(Lx,Ly)$ from
that of $q^{H(Lx,Ly)}$, or from that of $\O(Lx,Ly)$. However, the resulting formulas are much
bulkier than \eqref{eq_covariance_integral}, \eqref{eq_covariance_integral_zero}, and we have not
found a good way to simplify them.
\end{remark}

At $\alpha=0$, we can generalize Theorem \ref{Theorem_CLT}: in Section
\ref{Section_CLT_conjecture} we describe its upgrade to general boundary conditions
and link it to a stochastic telegraph equation.

\medskip

In the remainder of this section we prove  Theorems \ref{Theorem_LLN}, \ref{Theorem_CLT}, and
Proposition \ref{Proposition_q_0}.

\subsection{Observables}

The asymptotic analysis of this section is based on (algebraic) results from \cite{Bor_dyn}, that
generalize those of \cite{BCG}, \cite{CP}, \cite{BP1}, \cite{BP2}; more powerful results can be found in
\cite{Ag_dynamic}.

As before, we use the notation $\O(x,y)=-\alpha^{-1} q^{H(x,y)} + q^{y-x+1-H(x,y)}$.

\begin{theorem}[{\cite[Theorem 10.1]{Bor_dyn}}] \label{Theorem_observable}
 For any fixed $y\ge 1$ and $x_1\ge x_2\ge\dots\ge x_n \in \mathbb Z_{>0}$ the expectation
 \begin{equation}
 \label{eq_obs_def}
  E_N(x_1,\dots,x_N):=\frac{1}{(-\alpha^{-1};q)_n} \E \left[ \prod_{k=1}^n \left(q^{y-x_k+1} - \alpha^{-1} q^{2k-2} -q^{k-1} \O(x_k,y) \right) \right]
 \end{equation}
 is equal to
\begin{multline}
\label{eq_Observable_contour_integral}
 \frac{q^{n(n-1)/2}}{(2\pi \ii)^n} \oint \dots \oint \prod_{1\le i<j\le n}
 \frac{z_i-z_j}{z_i-qz_j} \\ \times \prod_{i=1}^n \left[ \left( \frac{1+ q^{-1} \frac{1-b}{1-qb}  z_i}{1+ \frac{1-b}{1-qb} z_i}   \right)^{x_i-1}
 \left( \frac{1+z_i}{1+q^{-1}z_i} \right)^{y} \frac{d z_i}{z_i} \right],
\end{multline}
with positively oriented integration contours encircling $-q$ and no other poles of the integrand.
In particular, $E_N(x_1,\dots,x_N)$ does not depend on $\alpha$.
\end{theorem}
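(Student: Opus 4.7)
The plan is to derive the contour integral formula by a Markov duality plus Bethe ansatz strategy, in the style of Borodin--Corwin--Gorin and of Borodin's dynamic paper. First I would reinterpret the sequential stochastic construction as iteration of a row-to-row transfer operator $T_y$ acting on the height profile along a horizontal line, so that $E_n(x_1,\dots,x_n)$ becomes the expectation of a fixed test function on the state at time $y$. The key step is to recognize the specific product
$\prod_{k=1}^n\bigl(q^{y-x_k+1}-\alpha^{-1}q^{2k-2}-q^{k-1}\O(x_k,y)\bigr)$ as the \emph{duality function} $\Psi(H;\vec x)$ of a Markov duality between the dynamic stochastic six-vertex model and an auxiliary $n$-particle ``dual'' system on the $x$-axis. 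The shifts by $q^{y-x_k+1}$ and the $q^{k-1}$-weighting are chosen precisely to turn the dynamical parameter $\alpha$ into a trivial coboundary on the dual side, which is what ultimately makes the right-hand side $\alpha$-independent.

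Granted the duality, I would next check that the dual evolution on the ordered configurations $x_1\ge\dots\ge x_n$ is \emph{free} modulo a two-body boundary condition at coincidences, with $S$-matrix $(z_i-qz_j)/(z_i-z_j)$; this is the content of the dynamical Yang--Baxter equation underlying the weights of Figure \ref{Fig_weights}. Eigenfunctions are therefore nested-contour Bethe plane waves, and the one-step transitions in $y$ and in $x_k$ diagonalize to the scalar factors $\bigl(\tfrac{1+z_i}{1+q^{-1}z_i}\bigr)$ and $\bigl(\tfrac{1+q^{-1}\tfrac{1-b}{1-qb}z_i}{1+\tfrac{1-b}{1-qb}z_i}\bigr)$ respectively. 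Expanding $\Psi(H;\vec x)$ in this eigenbasis and running the dynamics yields a formal series which, after exchanging summation and integration and using the nested-contour prescription with contours enclosing $-q$ only, collapses to \eqref{eq_Observable_contour_integral}. The prefactor $q^{n(n-1)/2}$ and the $1/(-\alpha^{-1};q)_n$ normalization appear from matching the dual ground state to the domain-wall initial condition $H(x,0)=0$.

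Finally I would check the initial data: at $y=0$ the height function vanishes identically, so $\O(x_k,0)=-\alpha^{-1}+q^{1-x_k}$, and one has to verify by a residue computation that \eqref{eq_Observable_contour_integral} at $y=0$ reduces to $\tfrac{1}{(-\alpha^{-1};q)_n}\prod_k(q^{-x_k+1}-\alpha^{-1}q^{2k-2}-q^{k-1}(-\alpha^{-1}+q^{1-x_k}))$, which simplifies dramatically. Together with the recursion from the duality this pins down both sides and completes the proof. The main obstacle will be step one--two: establishing and correctly normalizing the duality in the genuinely dynamic setting $\alpha>0$. In the $\alpha=0$ case this reduces to the standard $q^H$-duality of Corwin--Petrov, but for $\alpha\neq 0$ it requires the dynamical (rather than vertex) Yang--Baxter equation and the identification of the shifted observable $\Psi$; the bookkeeping of the gauge that turns the dynamical weights into a stochastic dual process, and thus explains the $\alpha$-independence of the right-hand side, is the delicate combinatorial core of the argument.
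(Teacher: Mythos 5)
This theorem is not proved in the paper: it is cited verbatim as \cite[Theorem 10.1]{Bor_dyn} and used as a black-box input. So there is no in-paper proof to compare your proposal against. I will therefore assess the proposal on its own merits and against what is known about the cited source.

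Your route --- recognize the observable as a Markov duality functional and diagonalize the dual ``free modulo two-body $S$-matrix'' dynamics by nested-contour Bethe plane waves --- is a legitimate and well-documented alternative to the approach in \cite{Bor_dyn}. At $\alpha=0$ this is exactly the Corwin--Petrov $q$-moment duality, and for $\alpha>0$ Aggarwal's work \cite{Ag_dynamic}, cited in this very paper, develops precisely such dynamical dualities. By contrast, \cite{Bor_dyn} does not invoke Markov duality at all: Theorem 10.1 there is extracted from Cauchy-type identities for (rational/elliptic) symmetric functions built as IRF partition functions, i.e.\ the observable formula is an algebraic consequence of the symmetric-function machinery rather than of a probabilistic duality. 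So you are proposing a genuinely different argument; both routes are viable, yours being more probabilistic but requiring the dual process to be exhibited explicitly in the dynamic regime.

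That said, the proposal as written is a plan rather than a proof, and it has concrete gaps. First, the duality relation for $\alpha\neq 0$ is asserted, not verified; the specific $q^{2k-2}$ shift and the $q^{k-1}$-weighting are not ``chosen'' --- they have to be derived from the dynamical Yang--Baxter structure, and this is the hard step you explicitly defer. Second, the passage from ``eigenfunctions are Bethe plane waves'' to the single nested contour integral requires a completeness/Plancherel statement for the relevant Bethe basis with the contours encircling only $-q$; you do not say which one you invoke or why that contour prescription, which is a nontrivial analytic input, is the right one. Third, your proposed base case at $y=0$ does not actually pin anything down: for domain wall data $H(\cdot,0)\equiv 0$, the $k=1$ factor $q^{1-x_1}-\alpha^{-1}-\O(x_1,0)$ vanishes identically, so $E_n|_{y=0}=0$, and on the right-hand side the factor $\bigl(\tfrac{1+z_i}{1+q^{-1}z_i}\bigr)^{y}$ degenerates to $1$ at $y=0$, removing the only pole inside the contour, so the integral is also $0$. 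The identity $0=0$ is consistent but vacuous, and in fact makes clear that the dual evolution in $y$ cannot be homogeneous: the first nontrivial value $E_1(x_1)=b^{x_1-1}(q-1)$ at $y=1$ is produced by the boundary injection of paths, which enters the dual recursion as an inhomogeneous source term. Your sketch does not account for this source term, so ``the recursion from the duality pins down both sides'' does not go through as stated. These are the places where the real work lies; the high-level shape of the argument is reasonable, but as presented it cannot be checked.
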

\begin{remark}
 The expression $q^{y-x+1} - \alpha^{-1} q^{2k-2} -q^{k-1} \O(x,y)$ in \eqref{eq_obs_def} can be written as
 $$
     \left(q^{y-x+1} q^{-H(x,y)}+\alpha^{-1} q^{k-1}\right)\left (q^{H(x,y)}-  q^{k-1}\right).
 $$
 In the case $\alpha=0$, the observable $E_N$ simplifies to
 \begin{equation}
  E_N(x_1,\dots,x_N)\Bigr|_{\alpha=0}= \E \left[ \prod_{k=1}^n \left( q^{H(x_k,y)}-  q^{k-1}\right) \right].
 \end{equation}
\end{remark}
\begin{remark}
 The formula \eqref{eq_Observable_contour_integral} matches \cite[Theorem 4.12]{BCG}, with $x_1=x_2=\dots=t+1$, $y=x$. Note
 that there is a shift by $1$ because of slightly different coordinate systems.
\end{remark}

\begin{proposition} \label{Proposition_contour_deformation}
 In \eqref{eq_Observable_contour_integral}, for each $n\ge 1$, and for $q,b$ sufficiently close to $1$,
 one can deform the contours so that they
 still include the poles at $-q$, and in addition are nested: $z_i$ is inside $q
 z_j$ for $1\le i<j\le n$. This deformation does not change the value of the
 integral.
\end{proposition}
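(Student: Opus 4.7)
The plan is to deform the contours one at a time from the original configuration --- $C_1=\cdots=C_n=C$, a single small circle around $-q$ --- to the nested target, tracking residues picked up during the deformation and showing that each extra contribution vanishes upon integrating out the remaining variables. As a function of $z_i$ alone (with the other $z_j$ frozen on their contours), the integrand has the \emph{external} poles $\{0,-q,-(1-qb)/(1-b)\}$ together with the \emph{interaction} poles $z_i=qz_j$ for $j>i$ and $z_i=z_j/q$ for $j<i$. For $q,b$ close to $1$ the external poles $0,\,-(1-qb)/(1-b)\sim -1/\s$ sit at distance of order one from $-q$ while the interaction poles lie at distance of order $1-q$, so sufficiently small circles around $-q$ avoid the external poles. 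The nested target is realized as circles $C_j$ around $-q$ of radii $r_1<r_2<\cdots<r_n$ satisfying $r_{j+1}>r_j/q+(1-q)$; this is the elementary nesting $C_j\subset qC_{j+1}$, and the full condition $C_i\subset qC_j$ for $i<j$ follows by induction. For $q$ close to $1$ the radii can be chosen to grow only as $r_j\sim j(1-q)$, still small.

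I deform in the order $C_n,C_{n-1},\ldots,C_1$: at stage $\ell$, $C_\ell$ is expanded from $C$ to radius $r_\ell$ while $C_{\ell+1},\ldots,C_n$ already sit at their target sizes and $C_1,\ldots,C_{\ell-1}$ still equal the original small $C$. The outward poles $z_\ell=qz_j$ for $j>\ell$ are never crossed because $C_\ell(t)\subset qC_j$ throughout the expansion (by the nesting already arranged for the outer contours), but the inward poles $z_\ell=z_k/q$ for $k<\ell$, lying near $-1$, are crossed and each contributes an extra residue. Taking the residue at $z_\ell=z_k/q$ of the factor $(z_k-z_\ell)/(z_k-qz_\ell)$ and then invoking the telescoping identities
\[
\frac{z_k-z_i}{z_k-qz_i}\cdot\frac{z_i-z_k/q}{z_i-z_k}=\frac{1}{q}\ (k<i<\ell),\qquad \frac{z_k-z_j}{z_k-qz_j}\cdot\frac{z_k/q-z_j}{z_k/q-qz_j}=\frac{z_k-z_j}{z_k-q^2z_j}\ (j>\ell),
\]
one finds that the extra contribution factorizes through the $z_k$-contour integral
\[
\frac{1}{2\pi\ii}\oint_{C_k}\frac{f_k(z_k)\,f_\ell(z_k/q)}{z_k}\,\bigl(\text{factors regular at }z_k=-q\bigr)\,dz_k,
\]
where $f_i(z)=\bigl(\tfrac{1+q^{-1}\frac{1-b}{1-qb}z}{1+\frac{1-b}{1-qb}z}\bigr)^{x_i-1}\bigl(\tfrac{1+z}{1+q^{-1}z}\bigr)^y$.

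The key observation is that the $y$th order pole of $f_k(z_k)$ at $z_k=-q$ (from $(1+q^{-1}z_k)^y$ in the denominator) is exactly cancelled by the $y$th order zero of $f_\ell(z_k/q)$ at the same point (from $(1+z_k/q)^y$ in the numerator), and all other candidate singularities of the post-residue $z_k$-integrand --- namely $-q^2$ and $-q(1-qb)/(1-b)$ from $f_\ell(z_k/q)$, $z_i/q$ for $i<k$, and $q^2z_j$ for $j>\ell$ --- lie outside the small $C_k$. The $q^2z_j$ case is where the deformation order matters: the nesting $r_{j+1}>r_j/q+(1-q)$ automatically yields $r_j>(1-q^2)/q$ for $j\ge 3$, which implies that $z_j\in C_j$ never lies close enough to $-1/q$ for $q^2z_j$ to enter the interior of the small $C_k$ (the only case $j=2$ would arise at $\ell=1$, but stage $\ell=1$ has no inward pole to cross). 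Hence the $z_k$-integrand is holomorphic inside $C_k$ and integrates to zero by Cauchy's theorem, and iterating over $\ell=n,n-1,\ldots,2$ completes the proof. The main obstacle is the bookkeeping of the several pair-factor simplifications after each residue and the geometric verification that the already-enlarged outer contours do not pull any new singularity of the $z_k$-integrand into $C_k$; the deformation order ``outer first, inner last'' is precisely what makes this verification possible.
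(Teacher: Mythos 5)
Your proof is correct and follows essentially the same strategy as the paper's own (omitted) proof: deform the contours one at a time starting from the outermost index, and show that each residue picked up at $z_\ell = z_k/q$ yields an integrand holomorphic inside the $z_k$-contour, the essential cancellation being the $y$-th order pole at $z_k=-q$ coming from $(1+q^{-1}z_k)^{-y}$ in the $z_k$-factor against the $y$-th order zero of $(1+z_k/q)^{y}$ in the $z_\ell$-factor evaluated at $z_\ell=z_k/q$. Your bookkeeping is more thorough than the paper's terse sketch: you make the radii explicit, verify the nesting inequality $r_{j+1}>r_j/q+(1-q)$, and exhaustively locate all candidate singularities (including $q^2 z_j$, whose avoidance genuinely relies on the outer-first deformation order).
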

We omit the proof of Proposition \ref{Proposition_contour_deformation}, as it is a direct contour deformation similar to \cite[Theorem 8.13]{BP1}, see also discussion after Proposition 2.2 in \cite{Bor16}.
In what follows we always use the result of Theorem \ref{Theorem_observable} on the contours of
Proposition \ref{Proposition_contour_deformation}.

\subsection{Limit of expectation}
\label{Section_expectation_integral}

 Straightforward limit transition in the $N=1$ version of Theorem \ref{Theorem_observable} yields that
 $$
  \lim_{L\to\infty} \E \left[ \frac{\q^{y-x} - \alpha^{-1} -\O(Lx,Ly)}{1-\alpha^{-1}} \right]
 $$
 is the expression in the right--hand side of \eqref{eq_LLN}.

 Second order expansion of $N=1$ version of Theorem \ref{Theorem_observable} can be
 similarly used to obtain the second order expansion of $\E[ \O(Lx,Ly)]$ as $L\to\infty$. This
 expectation is used for the centering in Theorem \ref{Theorem_CLT}.

\subsection{Limit of covariance}
\label{Section_covariance_integral}

 Applying $N=2$ version of Theorem
 \ref{Theorem_observable},  we get for $x_1\ge x_2$
\begin{multline}
\label{eq_x18}
 \lim_{L\to\infty} L\bigl[E_2(Lx_1,Lx_2)-E_1(L x_1) E_1(Lx_2)\bigr]=
 \frac{L}{(2\pi \ii)^2} \oint \oint
 \left[\frac{q z_1-q z_2}{z_1-qz_2} - 1\right] \\ \times \prod_{i=1}^2 \left[ \left( \frac{1+ q^{-1} \frac{1-b}{1-qb}  z_i}{1+ \frac{1-b}{1-qb} z_i}   \right)^{x_i-1}
 \left( \frac{1+z_i}{1+q^{-1}z_i} \right)^{y} \frac{d z_i}{z_i} \right]
\\=
\frac{\ln(\q)}{(2\pi \ii)^2} \oint \oint \frac{z_1}{z_1-z_2}
\prod_{i=1}^{2}\left[\exp\left(
      \ln(\q)\left(- x_i \frac{\s z_i}{1+ \s z_i }   + y\frac{z_i}{1+z_i} \right)\right) \frac{d z_i}{z_i}\right]
\end{multline}
where the contours (see Proposition \ref{Proposition_contour_deformation}) are such that they both enclose $-1$ and
$z_1$--contour is inside the $z_2$--contour.
On the other hand,
\begin{multline} \label{eq_x2}
 E_2(Lx_1,Lx_2)=
 \frac{1}{(1+\alpha^{-1})(1+\alpha^{-1} q)}
 \E \Biggl[ \prod_{k=1}^2 \Bigl(q^{Ly-Lx_k+1} - \alpha^{-1} q^{2k-2}\\ -q^{k-1} \E \O(Lx_k,Ly)-q^{k-1}(\O(Lx_k,Ly)-\E\O(Lx_k,Ly)) \Bigr) \Biggr]
 \\=\frac{\prod_{k=1}^2 \E[q^{Ly-Lx_k+1} - \alpha^{-1} q^{2k-2} -q^{k-1}  \O(Lx_k,Ly)]}{(1+\alpha^{-1})(1+\alpha^{-1} q)}\\ + \frac{q\, \mathrm{Cov}(\O(Lx_1,Ly),\O (Lx_2,Ly))}{(1+\alpha^{-1})(1+q\alpha^{-1})}.
  \end{multline}
Thus, as $L\to\infty$ in the regime \eqref{eq_limit_regime},
\begin{multline*}
E_2(Lx_1,Lx_2)-E_1(L x_1) E_1(Lx_2)= \frac{q\, \mathrm{Cov}(\O(Lx_1,Ly),\O (Lx_2,Ly))}{(1+\alpha^{-1})(1+q\alpha^{-1})}
\\+  \frac{\E \left[q^{Ly-Lx_1+1} - \alpha^{-1} -\O(Lx_1,Ly) \right]}{(1+\alpha^{-1})}  \\ \times
 \Biggl(\frac{\E \left[q^{Ly-Lx_2+1} - \alpha^{-1}q^2 -q \O(Lx_2,Ly) \right]}{(1+q\alpha^{-1})} \\ -\frac{\E \left[q^{Ly-Lx_2+1} - \alpha^{-1} -\O(Lx_2,Ly) \right]}{(1+\alpha^{-1})}\Biggr),
 \end{multline*}
 which can be transformed into
 \begin{multline*}
  \frac{q\, \mathrm{Cov}(\O(Lx_1,Ly),\O (Lx_2,Ly))}{(1+\alpha^{-1})(1+q\alpha^{-1})}+ O\bigl( (1-q)^2 \bigr) \\ +
  \frac{(1-q)\alpha^{-1}\prod_{k=1}^2\E \left[q^{Ly-Lx_k+1} - \alpha^{-1} -\O(Lx_k,Ly) \right] }{(1+\alpha^{-1})^3}\\ +
  \frac{\E \left[q^{Ly-Lx_1+1} - \alpha^{-1} -\O(Lx_1,Ly) \right]}{(1+\alpha^{-1})^2}
 \left(\alpha^{-1}(1-q^2) + (1-q)\E \O(Lx_2,Ly)\right).
\end{multline*}
We conclude that
\begin{multline}
\label{eq_x17}
 \lim_{L\to\infty} \frac{L\, \mathrm{Cov}(\O(Lx_1,Ly),\O (Lx_2,Ly))}{(1+\alpha^{-1})^2}
 \\ = \lim_{L\to\infty}L\Bigl[E_2(Lx_1,Lx_2)-E_1(L x_1) E_1(Lx_2)\Bigr]\\+
 \ln(\q)
    \lim_{L\to\infty}\Bigl[ E_1(Lx_1)\Bigr]
   \frac{\lim\limits_{L\to\infty}\Bigl[ E_1(Lx_2)]+
    \q^{y-x_2}+\alpha^{-1} }{1+\alpha^{-1}}.
\end{multline}
Using \eqref{eq_x17}, \eqref{eq_x18}, and the computation of Section
\ref{Section_expectation_integral} we arrive at \eqref{eq_covariance_integral}.
\subsection{Cumulant-type sums}
\label{Section_cumulant_sums}

Our proof of the asymptotic Gaussianity in Theorem \ref{Theorem_CLT}  relies on a combinatorial
statement presented in this section.

Let $\S_n$ denote the set of all \emph{set partitions} of $\{1,\dots,n\}$. An element $s\in\S_N$ is
a collection $S_1,\dots, S_k$ of disjoint subsets of $\{1,\dots,n\}$ such that
$$
\bigcup_{m=1}^k S_m=\{1,\dots,n\}.
$$
The number of non-empty sets in $s\in\S_n$ will be called the \emph{length} of $s$ and denoted as
$\ell(s)$.

Fix $n=1,2,\dots$ and suppose that for each subset $A\subset \{1,2,\dots,n\}$ we are given a number
$M_A$ called the ``joint moment of $A$''. Then we define the corresponding joint cumulant $C_n$
through
\begin{equation}
\label{eq_cumulant_through_moment}
 C_n:=\sum_{s\in \S_n} (-1)^{\ell(s)+1} \bigl(\ell(s)-1\bigr)! \prod_{A\in s} M_A.
\end{equation}

 \begin{theorem}
 \label{theorem_shifted_cumulants}
  Fix $n>2$. Take $n$ random variables $\xi_1,\dots,\xi_n$, $n$ deterministic
  real numbers $r_1$, \dots, $r_n$,
   $n(n-1)/2$ real numbers $a_{ij}$, $1\le i<j \le n$, and an auxiliary small parameter $\eps>0$. Define two different sets of moments $M_A$, $M'_A$ for $A=\{i_1<i_2<\dots<i_m\}
  \subset \{1,\dots,n\}$ through
  \begin{equation}
  \label{eq_two_types_of_cums}
   M_A=\E \left[\prod_{k=1}^m \xi_{i_k}\right],\qquad M'_A=\E \left[\prod_{k=1}^m (r_{i_k}+\eps\cdot \xi_{i_k})\right] \prod_{1\le k<l\le m}
   (1+\eps^2\cdot a_{i_k,i_l}).
  \end{equation}
  Then the corresponding cumulants $C_n$, $C'_n$ given by \eqref{eq_cumulant_through_moment} are related through
  \begin{equation}
  \label{eq_cumulant_and_cumulant}
   C'_n=\eps^n\cdot C_n + \eps^{n+1} \cdot P(\eps,r_i,a_{ij},\xi_i)  \quad \text{ or } \quad C_n=\eps^{-n} \cdot C'_n-\eps \cdot P(\eps,r_i,a_{ij},\xi_i),
  \end{equation}
  where the remainder $P$ is a polynomial in $\eps$, $r_i$, $a_{ij}$, $1\le i,j\le n$, and  joint moments of $\xi_i$ of the total order up to $n$.
 \end{theorem}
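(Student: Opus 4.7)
My plan is to expand each $M'_A$ as a polynomial in $\eps$, substitute into $C'_n=\sum_{s\in\S_n}(-1)^{\ell(s)+1}(\ell(s)-1)!\prod_{A\in s}M'_A$, interchange the resulting summations, and reduce everything to a standard vanishing property of classical cumulants.

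Directly expanding, for $A=\{i_1<\cdots<i_m\}$,
\[
 M'_A \;=\; \sum_{B\subset A}\;\sum_{E\subset\binom{A}{2}} \eps^{\,|B|+2|E|}\;M_B \prod_{j\in A\setminus B} r_j \prod_{e\in E} a_e .
\]
Plugging this into the cumulant formula and interchanging sums re-parametrizes $C'_n$ by triples $(s,B,G)$: a set partition $s\in\S_n$, a subset $B\subset\{1,\dots,n\}$ (recording the block-wise choices $B\cap A$), and a simple graph $G$ on $\{1,\dots,n\}$ all of whose edges lie inside a single block of $s$. The corresponding summand is $\eps^{|B|+2|E(G)|}\,\prod_{j\notin B}r_j\,\prod_{e\in E(G)}a_e$ multiplied by $(-1)^{\ell(s)+1}(\ell(s)-1)!\prod_{A\in s}M_{A\cap B}$.

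Now I fix $(B,G)$ and let $s^G=\{B_1,\dots,B_p\}$ be the partition of $\{1,\dots,n\}$ into the connected components of $G$ (singletons for isolated vertices). The ``edges inside blocks'' condition on $s$ is equivalent to $s\ge s^G$ in the refinement order, and such coarsenings are in bijection with set partitions $t\in\S_p$ via block-merging, preserving $\ell(s)=\ell(t)$. Setting $Y_i:=\prod_{k\in B_i\cap B}\xi_k$ (empty product $=1$), this bijection identifies
\[
 \sum_{s\ge s^G}(-1)^{\ell(s)+1}(\ell(s)-1)!\prod_{A\in s}M_{A\cap B}
 \;=\;\kappa_p(Y_1,\dots,Y_p),
\]
the classical $p$-th joint cumulant of the composite variables $Y_1,\dots,Y_p$. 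The key observation is that if some $B_{i_0}\cap B=\emptyset$, then $Y_{i_0}\equiv 1$ is deterministic, hence $\kappa_p=0$ for every $p\ge 2$ (immediate from the cumulant generating function).

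A short degree count finishes the proof. Whenever $|B|+2|E(G)|<n$, both $p\ge 2$ and the existence of some $B_i$ disjoint from $B$ hold automatically: if $p=1$, then $G$ is connected on $n$ vertices so $|E(G)|\ge n-1$ and $|B|+2|E(G)|\ge 2n-2\ge n$; and if every $B_i$ meets $B$, then $|B|\ge p$ and $|E(G)|\ge n-p$, yielding $|B|+2|E(G)|\ge 2n-p\ge n$. Consequently all $\eps^k$--contributions for $0\le k<n$ cancel, while at order $\eps^n$ the unique surviving triple is $(\{1,\dots,n\},\emptyset)$, for which $s^G$ is the discrete partition and the inner sum is exactly $C_n$. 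All remaining terms assemble into $\eps^{n+1}P$ with $P$ manifestly polynomial in $\eps$, $r_i$, $a_{ij}$, and the moments $M_B$ with $|B|\le n$. The main obstacle is the identification of the inner partition-sum as the classical cumulant of the composite variables $Y_i$; once that is set up, the cancellations reduce to the folklore fact that a cumulant with a deterministic argument vanishes, and the rest is bookkeeping.
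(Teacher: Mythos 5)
Your proof is correct, and it takes a genuinely different (and, I think, cleaner) route than the paper's, although the underlying combinatorial cancellation is the same in substance. The paper encodes the expanded terms as ``clustered graphs'' (a graph on $\{1,\dots,n\}$ together with a family of disjoint clusters recording which $\xi$'s are grouped inside a single expectation), fixes a ``bad'' clustered graph possessing an edge-connected component $A$ disjoint from every cluster with $A\ne\{1,\dots,n\}$, and then shows by a direct factorial computation --- summing over partitions $s_0$ of the complement of $A$ and over whether $A$ is adjoined to a block of $s_0$ or made a new block, and using $r\cdot(-1)^{r+1}(r-1)!+(-1)^{r+2}r!=0$ --- that the total coefficient of that clustered graph vanishes. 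Your argument groups the terms more coarsely, by pairs $(B,G)$ rather than by clustered graphs, and the payoff is that the inner sum over compatible partitions $s\ge s^G$ is recognized \emph{exactly} as the classical joint cumulant $\kappa_p(Y_1,\dots,Y_p)$ of the block-products $Y_i=\prod_{k\in B_i\cap B}\xi_k$; the needed cancellation then reduces to the standard fact that a joint cumulant of $p\ge 2$ arguments vanishes whenever one argument is deterministic. If one unwinds the proof of that standard fact, one recovers precisely the paper's factorial identity, so the two arguments are equivalent at bottom, but your identification avoids re-deriving it from scratch and makes the structure of the cancellation transparent. The degree-counting step (showing that the surviving terms with $|B|+2|E(G)|<n$ necessarily have $p\ge 2$ and some $B_i$ disjoint from $B$, that the $\eps^n$ coefficient reduces to the single triple $(\{1,\dots,n\},\emptyset)$ giving $C_n$, and that everything else is $O(\eps^{n+1})$) matches the paper's and is done correctly, using the hypothesis $n>2$ where needed.
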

 \begin{remark}
  \label{Remark_shifted_cumulants_homo}
   If $a_{ij}$ depend only on the second index,  $a_{i,j}=\tilde a_j$, then $M'_A$ can be rewritten as
   \begin{equation}
   M'_A=\E \left(\prod_{k=1}^m \left[(r_{i_k}+\eps\cdot \xi_{i_k})
   (1+\eps^2\cdot \tilde a_{i_k})^{k-1}\right]\right).
  \end{equation}
  This is the form which appears in our proof of Theorem \ref{Theorem_CLT}.
 \end{remark}
\begin{proof}[Proof of Theorem \ref{theorem_shifted_cumulants}]
 Let us expand $M'_A$ into a large sum, opening the parentheses, substitute into $C'_n$ and collect the terms.
 Each term is a product of (usual) moments $M_B$, numbers $r_{i_k}$ and $a_{i_k,i_l}$, and powers of $\eps$.
 We plug in the expansions into the definition of $C'_n$ and further expand and collect the same terms as much as possible.

 Let us introduce a combinatorial encoding for each term of the resulting sum. We start with $n$ vertices,
  representing the indices $\{1,2,\dots,n\}$. We proceed by drawing edges between some of the
  vertices: an edge joining $i$ with $j$ represents the factor  $\eps^2\cdot a_{i,j}$, $i<j$.
  Some of the vertices will be linked into (disjoint) clusters: a cluster with vertices $i_1,\dots,i_m$
  represents the factor $\eps^m \E \bigl[\prod_{k=1}^m \xi_{i_k}\bigr]$. Any vertex $t$  that does
   not belong to any cluster produces the factor $r_t$. We call the resulting combinatorial structure
   a clustered graph and identify it with the expression obtained by multiplying the factors corresponding
   to its edges and clusters.

 {\bf Claim.} For each clustered graph with non-zero contribution to $C'_n$ one of the following holds:
  \begin{enumerate}
    \item Either there are no clusters and the remaining graph is connected,

    \hskip-1.5cm or

    \item Each vertex is connected (by a path consisting of edges) to a vertex belonging to
        a cluster (in other words, each edge--connected component intersects with a cluster).
  \end{enumerate}

 Put it otherwise, the claim says that if we fix a clustered graph for which neither of the conditions holds, then the
 sum of the terms in $C'_n$ corresponding to this graph
 vanishes.  Before proving the claim note that it implies the statement of the theorem.
 Indeed, if there are no clusters, then we must have at least $n-1$ edges,
 which produces the factor $\eps^{2(n-1)}=O(\eps^{n+1})$. Otherwise, each vertex in a
 cluster produces a factor of $\eps$, and all vertices outside the clusters produce at least
$\eps^{m+1}$, where $m\ge 1$ is their number. Altogether we again get $O(\eps^{n+1})$. We conclude
that the only structures that have the power of $\eps$ smaller than $\eps^{n+1}$ are those with no
edges at all and with all vertices belonging to some clusters. This gives $\eps^{n}$ prefactor and
these terms precisely combine into the conventional cumulant $C_n$.

 \medskip

 We now prove the claim. Fix a clustered graph $G$ for which neither of the properties hold.
 Then this graph has an edge--connected component $A$ which does not intersect with clusters
 and $A\ne \{1,\dots,n\}$.
 Take a set partition $s_0$ of the set $\{1,\dots,n\}\setminus A$. Note that each set
  partition $s$ in \eqref{eq_cumulant_through_moment} for which  the graph $G$ arises in the decomposition
  (when $M_A$ are replaced by $M_A'$), is necessarily obtained by taking such $s_0$ and then either adding $A$ to
  one of the sets, or by putting $A$ as a new set of the partition. Each choice leads to one
  appearance of $G$. Let us sum over all these choices. For that suppose that $s_0$ has $r$ parts.
  When we  add $A$ to one of the sets of $s_0$, then the resulting partition has $r$ parts,
  and therefore the corresponding coefficient in \eqref{eq_cumulant_through_moment} is $(-1)^{r+1}(r-1)!$.
  On the other hand, if $A$ creates a new set, then the coefficient becomes $(-1)^{r+2} r!$. Since
  there are precisely $r$ sets to which $A$ can be added and $r \cdot (-1)^{r+1} (r-1)!+ (-1)^{r+2} r!=0$,
  we see that the total contribution of $G$ in \eqref{eq_cumulant_through_moment} (with $M_A'$ instead of $M_A$)
   vanishes.
\end{proof}

\subsection{Proof of LLN and CLT}
\begin{proof}[Proof of Theorem \ref{Theorem_LLN}]
 In Section \ref{Section_expectation_integral} we have shown that $\E(\O(Lx,Ly))$ converges to
 the expression given by \eqref{eq_LLN}. The covariance computation of Section \ref{Section_covariance_integral}
 implies that $\lim_{L\to\infty}\E(\O(Lx,Ly)-\E(\O(Lx,Ly)))^2=0$ and, therefore, $\O(Lx,Ly)$
 converges in probability to the deterministic limit given by \eqref{eq_LLN}. Since $\frac{1}{L}H(Lx,Ly)$
 is obtained from $\O(Lx,Ly)$ by applying
 a strictly monotone uniformly Lipschitz map, cf.\ \eqref{eq_O}, we deduce the convergence for $\frac{1}{L}H(Lx,Ly)$ as well.
\end{proof}

\begin{proof}[Proof of Theorem \ref{Theorem_CLT}]
In Section \ref{Section_covariance_integral} we obtained the formulas for the asymptotic covariance
of $L^{1/2} \O(Lx_k,Ly)$ which matches \eqref{eq_covariance_integral},
\eqref{eq_covariance_integral_zero}. It remains to prove the asymptotic Gaussianity, for which  we
are going to show that the joint cumulants of $L^{1/2} \O(Lx_k,Ly)$ of orders higher than $2$
vanish as $L\to\infty$.

Fix $n>3$ and  take $n$--tuple $x_1\le x_2\le \dots\le x_n$. We aim to prove that the $n$th joint
cumulant of $\{\O(Lx_k,Ly)\}_{k=1}^n$, which we denote $C_n$, decays faster than $L^{-n/2}$ as
$L\to\infty$.

For a set $A=\{i_1<i_2<\dots<i_m\}\subset\{1,2,\dots,n\}$, let $M'_A=E_m(i_1,i_2,\dots,i_m)$, as
given by \eqref{eq_Observable_contour_integral}. As in  Section \ref{Section_cumulant_sums}, we
denote through $C'_n$ the corresponding joint ``cumulant''. Contour integral expressions of Theorem
\ref{Theorem_observable} combined with \cite[Lemma 4.2]{BG_GFF} (with $\gamma=1$) yields that
$C'_n=o(L^{-n/2})$ as $L\to\infty$.

Note that \emph{a priori} $C'_n$ is different from the conventional cumulant $C_n$. However, we can
relate them using Theorem \ref{theorem_shifted_cumulants}. For that we write
$$
\O(Lx,Ly)=\O_{\infty}(x,y)+L^{-1/2} \Delta\O(x,y),
$$
where $\O_{\infty}(x,y)=\E \O(Lx,Ly)$ and $\Delta\O(x,y)$ is the fluctuation, for which we know (from the covariance computation of Section \ref{Section_covariance_integral}) that it is tight as $L\to\infty$.

Then we transform $E_m(Lx_1,\dots,Lx_m)$ as
\begin{multline}
\label{eq_x19}
  \E\prod_{k=1}^m \frac{q^{Ly-Lx_k+1} - \alpha^{-1} q^{2k-2} -q^{k-1} \O(Lx_k,Ly)}{1+\alpha^{-1} q^{k-1}}\\
  =\E\prod_{k=1}^m \frac{q^{Ly-Lx_k+1}- \alpha^{-1}q^{2(k-1)}-q^{k-1} \O_{\infty}(x_k,y)-q^{k-1}L^{-1/2} \Delta\O(x,y)}{(1+\alpha^{-1})
  (1+\frac{\alpha^{-1}}{1+\alpha^{-1}}(q^{k-1}-1))}.
\end{multline}
Let us examine the $k$th factor of \eqref{eq_x19}. The numerator splits into four terms, each of them has the form appearing in Theorem \ref{theorem_shifted_cumulants}. We need to deal with the denominator. For that we choose an integer $M>n/2$ and expand
\begin{multline*}
\frac{1}{(1+\alpha^{-1})
  (1+\frac{\alpha^{-1}}{1+\alpha^{-1}}(q^{k-1}-1))}=
  \frac{1}{1+\alpha^{-1}}\Biggl[1-\frac{\alpha^{-1}}{1+\alpha^{-1}}(q^{k-1}-1)\\+
  \left(\frac{\alpha^{-1}}{1+\alpha^{-1}}(q^{k-1}-1)\right)^2+\dots + \left(\frac{\alpha^{-1}}{1+\alpha^{-1}}(q^{k-1}-1)\right)^M+o\left( (q-1)^M \right) \Biggr].
\end{multline*}
Note that we can ignore $o( (q-1)^M)$, as this term has smaller order than the desired cumulants.
In the rest, we expand each $(q^{k-1}-1)
^b$ into $b+1$ terms using the Binomial theorem. Altogether we get $1+2+\dots+(M+1)=(M+1)(M+2)/2$ terms.

 We plug the resulting sum into the $k$th factor of \eqref{eq_x19} and get a sum of $2(M+1)(M+2)$ terms. Each term has a form
$$
 r \cdot  \bigl[ (1+ (q-1))^u \bigr]^{k-1}\text{ or } L^{-1/2} \xi \bigl[ (1+ (q-1))^u \bigr]^{k-1},
$$
where $u$ is a positive integer, $r$ is a deterministic number, $\xi$ is a random variable. We arrive at an expression of the form of the definition of $M'_A$ in
\eqref{eq_two_types_of_cums}, see Remark \ref{Remark_shifted_cumulants_homo}. The conclusion is that \eqref{eq_x19} turns into a sum of finitely many expressions, each of which has the form of $M'_A$ (for various choices of parameters) in Theorem \ref{theorem_shifted_cumulants}.

At this point we would like to apply Theorem \ref{theorem_shifted_cumulants} with $\eps=L^{-1/2}$.
Note that the ``cumulants'' $C'_n$ in this theorem are multilinear over the choices of $r_i$ and
$\xi_{i}$. In other words, if we fix $1\le t\le n$, set $r_t= r_t[1]+r_t[2]$,
$\xi_t=\xi_t[1]+\xi_t[2]$ and denote the resulting cumulants through $C'_n[1]$, $C'_n[2]$, then
$C'_n=C'_n[1]+C'_n[2]$. Thus, after we expand the $k$th factor in \eqref{eq_x19} into $2(M+1)(M+2)$
terms for each $k=1,\dots,m$ and further plug the expansions into ``cumulant'' $C'_n$, then using
the multilinearity we get a sum of $n\cdot 2(M+1)(M+2)$ ``cumulants''. For each of those we apply
Theorem \ref{theorem_shifted_cumulants} to reduce them to the conventional cumulants. At this point
most of the terms vanish, as they involve the conventional cumulant of a constant (in fact, zero)
random variable. In order $L^{-n/2}$ the only remaining term is $L^{-n/2}$ times the conventional
cumulant of $\Delta\O(x_1,y),\dots\Delta\O(x_n,y)$. Since by \cite[Lemma 4.2]{BG_GFF}, the entire
sum, $C'_n$, is $o\left(L^{-n/2}\right)$, we conclude that the latter cumulant, $C_n$, is
$o\left(L^{n/2}\right)$.
\end{proof}

\subsection{$\q\to 0$ limit}
Here we prove Proposition \ref{Proposition_q_0}. Although an extension of this
computation to the case of general $\alpha$ is possible,  we do not address it here.

 At $\alpha=0$, we take the statement of Theorem \ref{Theorem_LLN} and absorb
 $1$ as the residue at $0$ of the contour integral, getting the formula
\begin{equation}
\label{eq_LLN_form}
    \q^{\mathbf h(x,y)}
    =\frac{1}{2\pi \ii} \oint \exp\left(
      \ln(\q)\left(- x \frac{\s z}{1+ \s z }   + y\frac{z}{1+z} \right)\right) \frac{d z}{z} ,
\end{equation}
with integration contour enclosing $0$ and $-1$, but not $-\s^{-1}$.
At this point, we restrict ourselves to the case
\begin{equation}
\label{eq_xy_regime}
 \s \le \frac{x}{y} \le \s^{-1}.
\end{equation}

The $\q\to 0$ limit means that $\ln(\q)$ is a large parameter. We study the asymptotics of
\eqref{eq_LLN_form} through the steepest descent method. We thus need to find critical points of
the argument of the exponent, i.e.\ to solve
\begin{equation}
0=\frac{\partial}{\partial z} \left(- x \frac{\s z}{1+ \s z }   + y\frac{z}{1+z}
\right)=-\frac{\s x}{(1+\s z)^2} + \frac{y}{(1+z)^2}.
\end{equation}
The solutions $z_c$ are given by
\begin{equation}
\label{eq_critical}
 \frac{1+\s z_c}{1+z_c}=\pm \sqrt{\frac{\s x}{y}}, \qquad
  z_c=\frac{1-\left(\pm
 \sqrt{\frac{\s x}{y}}\right) }{\pm
 \sqrt{\frac{\s x}{y}}-\s},\qquad
  {1+\s z_c}=\frac{\s^{-1}-1}{\s^{-1}-\left(\pm \sqrt{\frac{y}{\s x}}\right)}.
\end{equation}
We need the solution with
$$
 \frac{\partial^2}{\partial z^2} \left(- x \frac{\s z}{1+ \s z }   + y\frac{z}{1+z}
\right)<0,
$$
as we want the steepest descent contour to be orthogonal to the real axis (note that our large parameter $\ln(\q)$ is negative). I.e., we need
$$
 2\frac{\s^2 x}{(1+\s z)^3} -2\frac{y}{(1+z)^3}<0,
$$
which is true if
\begin{equation}
\label{eq_case_1}
\begin{cases}\left(\frac{1+\s z}{1+ z}\right)^3 >\frac{\s^2 x}{y},\\ 1+\s z>0,\end{cases}
\quad \text{ or } \quad
\begin{cases}\left(\frac{1+\s z}{1+ z}\right)^3 <\frac{\s^2 x}{y},\\ 1+\s z<0.\end{cases}
\end{equation}
Note that due to \eqref{eq_xy_regime} and \eqref{eq_critical}, $1+\s z_c>0$ for both solutions
Therefore, the solution with $-\sqrt{\frac{\s x}{y}}$  does not satisfy \eqref{eq_case_1}, while the second one does. We
conclude that the correct solution has $+\sqrt{\frac{\s x}{y}}$ in \eqref{eq_critical}, i.e.,
$$
 z_c=\frac{1-
 \sqrt{\frac{\s x}{y}} }{
 \sqrt{\frac{\s x}{y}}-\s}.
$$
Using \eqref{eq_xy_regime} we see that $z_c>0$, and, therefore, we can deform the contour in \eqref{eq_LLN_form} to run through the
critical point. The usual critical point approximation arguments show that the integral then
behaves as
\begin{equation}
\label{eq_x6} \q^{\mathbf h(x,y)} \sim \exp\left(
      \ln(\q)\left(- x \frac{\s z_c}{1+ \s z_c }   + y\frac{z_c}{1+z_c}
      \right)\right) \frac{1}{z_c} \cdot \frac{1}{2\pi\ii} \int_{-\ii\infty}^{\ii\infty} \exp(\kappa_c u^2) du ,
\end{equation}
where $\kappa_c$ is half of the second derivative at the critical point --- the integral is
evaluated to $\sqrt{2\pi/\kappa_c}$. Therefore,
\begin{equation}
 \lim_{\q\to 0} \mathbf h(x,y)= - x \frac{\s z_c}{1+ \s z_c }   +
 y\frac{z_c}{1+z_c}
 =\frac{ (\sqrt{\s
 x}-\sqrt{y})^2}{1-\s},
\end{equation}
which is precisely \eqref{eq_BCG_LLN}. By combinatorics of the model, $\mathbf
h(x,y)=0$  for $x/y=\s^{-1}$ implies that also $\mathbf h(x,y)=0$ for all
$x/y>\s^{-1}$, as there are no paths to the right from the line $x/y=\s^{-1}$.
Similarly, $\mathbf h(x,y)=y-x$ for $x/y=\s$ implies that $\mathbf h(x,y)=y-x$ for
$x/y<\s$, as there is maximal possible number of paths to the left from the line
$x/y=\s$. In the formula \eqref{eq_LLN} this can be also seen: the integral will now
be dominated not by the neighborhood of the critical point, but by the residue at
$0$ or $\infty$, which appears when we deform the contour to reach the critical
point.

\section{Four point relation} \label{Section_four_point}

All our results for more general (than domain wall) boundary conditions are based on the following
statement.

\begin{theorem}
\label{Theorem_4_point}
 Consider the stochastic six--vertex model in the quadrant with arbitrary (possibly, even random) boundary conditions.
 For each $x,y\ge 0$ we have an identity
 \begin{multline}
 \label{eq_4_point_relation}
  q^{H(x+1,y+1)}-b \cdot q^{H(x,y+1)}
   - bq \cdot q^{H(x+1,y)}+(b+bq-1) \cdot q^{H(x,y)}= \xi(x+1,y+1),
 \end{multline}
 where the conditional expectation and variance for $\xi$ are
 \begin{equation}
 \label{eq_4_point_no_correlation}
 \E\bigl[ \xi(x+1,y+1) \mid H(u,v), u\le x \text{ or } v\le y  \bigr]=0,
 \end{equation}
 \begin{multline}
 \label{eq_4_point_covariance}
   \E \bigl[ \xi^2 (x+1,y+1) \mid H(u,v), u\le x \text{ or } v\le y  \bigr]\\=
   \bigr(qb(1-b)+b(1-qb)\bigl) \Delta_x \Delta_y
   + b(1-qb)(1-q) q^{H(x,y)} \Delta_x
   - b(1-b)(1-q) q^{H(x,y)} \Delta_y,
 \end{multline}
 with
 $$
  \Delta_x=q^{H(x+1,y)}-q^{H(x,y)},\quad \Delta_y=q^{H(x,y+1)}-q^{H(x,y)},
 $$
\end{theorem}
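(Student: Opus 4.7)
The four-point relation \eqref{eq_4_point_relation} involves only the four face heights around a single vertex and the weights of that vertex, so the strategy is a direct case analysis built on the sequential construction of the stochastic six-vertex model. By the time the vertex at the corner of the faces with heights $H(x,y),H(x+1,y),H(x,y+1),H(x+1,y+1)$ is sampled, the first three of these are already determined, and everything can be done conditionally on the $\sigma$-algebra described in \eqref{eq_4_point_no_correlation}. Introduce the incoming edge indicators $L:=H(x,y+1)-H(x,y)\in\{0,1\}$ and $B:=H(x,y)-H(x+1,y)\in\{0,1\}$, and let $T,R\in\{0,1\}$ be the outgoing top and right indicators; path conservation $L+B=T+R$ gives $H(x+1,y+1)-H(x,y)=T-B=L-R$, so the left-hand side of \eqref{eq_4_point_relation} equals $q^{H(x,y)}$ times the Laurent polynomial $q^{T-B}-b\,q^L-bq\cdot q^{-B}+(b+bq-1)$, depending only on $(L,B,T,R)$.

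The four values of $(L,B)$ are then treated separately. In the two deterministic cases $(L,B)=(0,0)$ (empty vertex) and $(L,B)=(1,1)$ (crossing vertex), the polynomial vanishes identically --- this is precisely what pins down the coefficients $b$, $bq$, and $b+bq-1$ in the statement --- so $\xi\equiv 0$ on these events and both \eqref{eq_4_point_no_correlation} and \eqref{eq_4_point_covariance} hold trivially on the left. In the two random cases, the model prescribes $T\sim\mathrm{Bern}(b)$ when $(L,B)=(1,0)$ and $R\sim\mathrm{Bern}(bq)$ when $(L,B)=(0,1)$; a one-line evaluation of the polynomial on the two possible outcomes yields, in each case, conditional expectation zero, establishing \eqref{eq_4_point_no_correlation}.

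For the conditional variance \eqref{eq_4_point_covariance} the same partition applies. In the two random cases exactly one of $\Delta_x,\Delta_y$ vanishes while the other equals $q^{H(x,y)}(q^{\pm 1}-1)$, and a short computation of the single-Bernoulli second moment of $\xi$ matches the single surviving term on the right: for $(L,B)=(1,0)$ one gets $b(1-b)(q-1)^2 q^{2H(x,y)}$ from both sides, and for $(L,B)=(0,1)$ one gets $b(1-qb)(q-1)^2 q^{2H(x,y)}/q$ from both sides. The one genuinely non-trivial check --- and, I expect, the only real obstacle --- is the deterministic case $(L,B)=(1,1)$, where $\xi\equiv 0$ but both $\Delta_x$ and $\Delta_y$ are nonzero. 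Substituting $\Delta_x=q^{H(x,y)}(q^{-1}-1)$ and $\Delta_y=q^{H(x,y)}(q-1)$ and factoring out $q^{2H(x,y)}(q-1)^2$, the right-hand side of \eqref{eq_4_point_covariance} reduces to the algebraic identity $-b(1-b)-b(1-qb)/q+b(1-qb)/q+b(1-b)=0$. Once this cancellation (and the three easier cases) is noted, the theorem follows immediately, in line with the authors' remark that the proof is immediate from the definition of the model.
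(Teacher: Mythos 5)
Your proof is correct and takes essentially the same approach as the paper's: a direct case analysis over the four incoming-edge configurations $(L,B)$ of the vertex at the corner of the four faces, with the two deterministic cases giving $\xi\equiv 0$ and the two random cases giving a mean-zero Bernoulli increment; all four variance checks match. Two small slips worth flagging: (i) your formula $H(x+1,y+1)-H(x,y)=T-B=L-R$ and the Bernoulli parameters you assign are consistent with each other only if one reads your ``$T$'' as the right-outgoing edge and your ``$R$'' as the top-outgoing edge (i.e., you have effectively interchanged the names; with the standard convention one has $H(x+1,y+1)-H(x,y)=R-B=L-T$), though this is purely notational and the computations are unaffected; and (ii) the claim that the two deterministic cases ``pin down'' the coefficients $b,bq,b+bq-1$ overstates matters, since both deterministic cases hold automatically for any coefficients summing to zero, and it is really the mean-zero condition in the two Bernoulli cases that fixes them.
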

\begin{remark}
 \label{Remark_noise_uncor}
 The relation \eqref{eq_4_point_no_correlation} implies that $\xi(x,y)$ are uncorrelated, i.e.,
 $\E \xi(x,y) \xi(x',y')=0$ for any $(x,y)\ne (x',y')$.
\end{remark}
\begin{proof}[Proof of Theorem \ref{Theorem_4_point}]
  Let us denote $H(x,y)$ through $h$.
  We fix the types of vertices at positions $(x,y)$, $(x+1,y)$, $(x,y+1)$ and sample the vertex at $(x+1,y+1)$ according to the probabilities of Figure \ref{Fig_weights}. There are four cases to consider.

  \begin{enumerate}
  \item If no paths enter into the vertex $(x+1,y+1)$ from below or from the left, then the type
      of the vertex is $I$ and $H(x+1,y)=H(x,y+1)=H(x+1,y+1)=h$, $\Delta_x=\Delta_y=0$. In
      particular, $\xi(x+1,y+1)=0$, and, therefore, its conditional expectation and variance
      vanish, which agrees with \eqref{eq_4_point_no_correlation}, \eqref{eq_4_point_covariance}.
   \item If two paths enter into the vertex $(x+1,y+1)$ (one from below and one from the left),
       then the type of the vertex is $II$, and $H(x+1,y)=h-1$, $H(x,y+1)=h+1$, $H(x+1,y+1)=h$,
       $\Delta_x =q^h (q^{-1}-1)$, $\Delta_y=q^h(q-1)$. This implies
       $\xi(x+1,y+1)=q^h(1-bq-bq\cdot q^{-1}-(1-b-bq))=0$. Again, the conditional expectation and
       variance vanish matching \eqref{eq_4_point_no_correlation}, \eqref{eq_4_point_covariance}.
   \item If the path enters into the vertex $(x+1,y+1)$ from below, but no path enters from the
       left, then we choose between the vertex types $IV$ and $VI$ with probabilities $bq$ and
       $1-bq$, respectively. In both cases $H(x+1,y)=h-1$, $H(x,y+1)=h$,
       $\Delta_x=q^h(q^{-1}-1)$, $\Delta_y=0$. In the first case of type $IV$, $H(x+1,y+1)=h-1$
       and
    $$\xi(x+1,y+1)=q^h(q^{-1}-b-bq \cdot q^{-1}+(b+bq-1))=q^h(q^{-1}-b)(1-q).$$
    In the second case of type $VI$, $H(x+1,y+1)=h$ and
    $$\xi(x+1,y+1)=q^{h}(1-b-bq\cdot q^{-1}+ (b+bq-1))=q^h b(q-1).$$
    The conditional expectation of $\xi(x+1,y+1)$ becomes
    $$
      bq\cdot q^h(q^{-1}-b)(1-q)+ (1-bq)\cdot q^h b(q-1)=0.
    $$
    The conditional variance is
    $$
     bq \cdot \bigl( q^h(q^{-1}-b)(1-q) \bigr)^2+ (1-bq) \bigl(q^h b(q-1)\bigr)^2= b(1-bq)(1-q)(q^{-1}-1) q^{2h},
    $$
    which matches \eqref{eq_4_point_covariance}.
    \item If the path enters into the vertex $(x+1,y+1)$ from the left, but no path enters from
        below, then we choose between the vertex types $III$ and $V$ with probabilities $b$ and
        $1-b$, respectively. In both cases $H(x+1,y)=h$, $H(x,y+1)=h+1$, $\Delta_x=0$,
        $\Delta_y=q^h(q-1)$. In the first case of type $III$, $H(x+1,y+1)=h+1$ and
        $$
         \xi(x+1,y+1)=q^h(q-b\cdot q-bq+(b+bq-1))=q^h(1-b)(q-1).
        $$
        In the second case of type $V$, $H(x+1,y+1)=h$ and
        $$
         \xi(x+1,y+1)=q^h(1-b\cdot q-bq+(b+bq-1))=q^hb(1-q).
        $$
        The conditional expectation of $\xi(x+1,y+1)$ becomes
        $$
          b\cdot q^h(1-b)(q-1)+ (1-b)\cdot q^hb(1-q)=0.
        $$
        The conditional variance of $\xi(x+1,y+1)$ is
        $$
          b\cdot \bigl(q^h(1-b)(q-1)\bigr)^2+ (1-b)\cdot \bigl(q^hb(1-q)\bigr)^2=b(1-b)(1-q)^2 q^{2h},
        $$
        which matches \eqref{eq_4_point_covariance}. \qedhere
  \end{enumerate}
\end{proof}

At times it will be convenient to use the integrated form of \eqref{eq_4_point_relation}.

\begin{corollary}
\label{corollary_4_point_integrated}
 In the notations of Theorem \ref{Theorem_4_point}, for each $X,Y\ge 1$ we have
 \begin{multline}
 \label{eq_4_point_relation_integrated}
  -(1-b) \sum_{x=1}^{X-1} q^{H(x,0)} - (1-bq) \sum_{y=1}^{Y-1}  q^{H(0,y)}+(1-b) \sum_{x=1}^{X-1} q^{H(x,Y)}
  \\ + (1-bq) \sum_{y=1}^{Y-1}  q^{H(X,y)}
 + (b+bq-1) q^{H(0,0)}- bq \cdot q^{H(X,0)} - b \cdot q^{H(0,Y)} + q^{H(X,Y)}
\\  =\sum_{x=1}^{X}\sum_{y=1}^{Y} \xi(x,y).
\end{multline}
\end{corollary}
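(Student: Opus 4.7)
The plan is to obtain the integrated identity by directly summing the pointwise four point relation \eqref{eq_4_point_relation} of Theorem \ref{Theorem_4_point} over the rectangle $0\le x\le X-1$, $0\le y\le Y-1$, and collecting terms. The right-hand side of the summed identity is immediately $\sum_{x=0}^{X-1}\sum_{y=0}^{Y-1}\xi(x+1,y+1)$, which after a shift of indices is exactly $\sum_{x=1}^{X}\sum_{y=1}^{Y}\xi(x,y)$, matching the right-hand side of \eqref{eq_4_point_relation_integrated}.

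For the left-hand side, I would reindex each of the four terms so that the argument of $q^{H}$ becomes a generic lattice point $(a,b)\in\{0,\dots,X\}\times\{0,\dots,Y\}$, and then read off the coefficient of $q^{H(a,b)}$. The first term $q^{H(x+1,y+1)}$ contributes $+1$ when $1\le a\le X$ and $1\le b\le Y$; the second term $-b\, q^{H(x,y+1)}$ contributes $-b$ when $0\le a\le X-1$ and $1\le b\le Y$; the third $-bq\, q^{H(x+1,y)}$ contributes $-bq$ when $1\le a\le X$ and $0\le b\le Y-1$; and the fourth $(b+bq-1)q^{H(x,y)}$ contributes $(b+bq-1)$ when $0\le a\le X-1$ and $0\le b\le Y-1$.

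The key observation is the algebraic cancellation
\[
1-b-bq+(b+bq-1)=0,
\]
so at any strictly interior point $(a,b)$ with $1\le a\le X-1$, $1\le b\le Y-1$ all four terms are present and the coefficient vanishes. What survives are the boundary contributions, which I would record by a short case analysis over the four corners and four edges. At the corners one finds coefficients $(b+bq-1)$, $-bq$, $-b$, $1$ at $(0,0)$, $(X,0)$, $(0,Y)$, $(X,Y)$ respectively; along the bottom and left interior edges only the third-and-fourth or second-and-fourth terms contribute, giving the coefficients $-bq+(b+bq-1)=-(1-b)$ and $-b+(b+bq-1)=-(1-bq)$; along the top and right interior edges the surviving combinations give $1-b=(1-b)$ and $1-bq=(1-bq)$. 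Assembling these coefficients produces exactly the left-hand side of \eqref{eq_4_point_relation_integrated}.

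There is no genuine obstacle here; the argument is a telescoping bookkeeping exercise. The only place requiring care is correctly tracking how the four different shifted arguments of $H$ in \eqref{eq_4_point_relation} translate into four slightly different summation ranges after reindexing, so that the interior cancellation and the boundary signs come out right.
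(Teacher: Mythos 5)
Your proposal is correct and is exactly the paper's approach: the paper also obtains the corollary by summing the four-point relation over $x=0,\dots,X-1$, $y=0,\dots,Y-1$, though it omits the bookkeeping that you carry out explicitly, and all your coefficients at the corners, edges, and interior check out.
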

\begin{proof}
We sum \eqref{eq_4_point_relation} over $x=0,\dots,X-1$, $y=0,\dots,Y-1$.
\end{proof}

\section{The telegraph partial differential equation}

\label{Section_PDEs}

We saw in Theorem \ref{Theorem_LLN} and equation \eqref{eq_tele_for_contour}  that the limit shape (after a non-linear transformation) solves
the telegraph equation. In order to move forward, we need to collect the facts about this equation
and its solutions. Some parts of this section are based on \cite[Chapter V]{Courant}.

\subsection{Existence and uniqueness of solutions}

Take three arbitrary real parameters $\lambda$, $\mu$, $\nu$ and a continuous function
$g(x,y):\mathbb R_{\ge 0} \times \mathbb R_{\ge 0} \to \mathbb R$. Consider the following integral equation
for an unknown continuous function $\phi(x,y)$, $x\ge 0$, $y\ge 0$:
\begin{multline}
\label{eq_integrated_telegraph}
 \phi(X,Y)+ \lambda \int_0^X \phi(x,Y) dx  +\mu \int_0^Y \phi(X,y) dy\\  +\nu \int_0^X \int_0^Y \phi(x,y)dxdy = g(X,Y).
\end{multline}
\begin{proposition}
\label{Proposition_integrated_PDE_solution}
 For each $a,b>0$, the equation \eqref{eq_integrated_telegraph} has a continuous solution
 $\phi(x,y)$ in $[0,a]\times [0,b]$. The solution is unique.
\end{proposition}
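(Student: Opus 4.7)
The plan is to recast \eqref{eq_integrated_telegraph} as a linear fixed-point equation and apply the Banach contraction principle in a suitable weighted sup-norm, the standard strategy for Volterra-type integral equations in two variables. Define the linear operator
$$T\phi(X,Y) := \lambda\int_0^X \phi(x,Y)\,dx + \mu\int_0^Y \phi(X,y)\,dy + \nu\int_0^X\!\!\int_0^Y \phi(x,y)\,dx\,dy,$$
so that \eqref{eq_integrated_telegraph} becomes $(I+T)\phi = g$. Dominated convergence shows that $T$ maps $C([0,a]\times[0,b])$ into itself, so the problem reduces to inverting $I+T$ on this Banach space.

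For a parameter $K>0$ to be chosen, equip $C([0,a]\times[0,b])$ with the weighted norm
$$\|\phi\|_K := \sup_{(x,y)\in[0,a]\times[0,b]} e^{-K(x+y)}|\phi(x,y)|,$$
which is equivalent to the usual sup-norm. The heart of the argument is the estimate
$$|T\phi(X,Y)| \le \|\phi\|_K \left(\frac{|\lambda|+|\mu|}{K}+\frac{|\nu|}{K^2}\right) e^{K(X+Y)},$$
obtained by bounding $|\phi(x,y)| \le \|\phi\|_K e^{K(x+y)}$ pointwise and then using $\int_0^X e^{Kx}\,dx \le K^{-1}e^{KX}$ (and the analogous bound on $[0,Y]$) in each of the three terms. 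Hence $\|T\phi\|_K \le C(K)\|\phi\|_K$ with $C(K) \to 0$ as $K\to\infty$, where crucially $C(K)$ depends only on $\lambda,\mu,\nu,K$ and not on $a,b$.

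Choosing $K$ large enough that $C(K)<1$, the operator $I+T$ becomes a contraction on $(C([0,a]\times[0,b]),\|\cdot\|_K)$, so it is invertible with inverse given by the Neumann series $\sum_{n\ge 0}(-T)^n$. This series converges absolutely in $\|\cdot\|_K$ and hence uniformly on $[0,a]\times[0,b]$, producing a unique continuous $\phi$ solving $(I+T)\phi = g$ and thereby establishing both existence and uniqueness. There is no genuine obstacle here; the argument is a textbook application of Banach's theorem with a weighted norm, the only subtle point being the uniformity of the contraction constant $C(K)$ in $a$ and $b$. As a bonus, the Neumann expansion yields an explicit representation $\phi = \sum_{n\ge 0}(-T)^n g$ of the solution as an iterated integral transform of $g$, which will be a natural starting point for identifying the corresponding Riemann function in the differential formulation developed next.
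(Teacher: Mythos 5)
Your argument is correct and establishes the result, but it takes a genuinely different route from the paper's. The paper proves the claim by showing directly that the affine map $\Theta f = g - \lambda\int_0^X f - \mu\int_0^Y f - \nu\int_0^X\!\int_0^Y f$ is a contraction in the \emph{ordinary} sup-norm, but only on a sufficiently small rectangle (the condition $(|\lambda|+|\mu|+|\nu|)(a+b+ab)<1$); it then extends to arbitrary $[0,a]\times[0,b]$ by appealing to translation-invariance of the form of the equation, i.e.\ by stepping across a grid of small rectangles and absorbing the already-determined part of the integrals into the inhomogeneity $g$ at each step. You instead introduce a Bielecki-type exponentially weighted norm $\|\phi\|_K=\sup e^{-K(x+y)}|\phi|$, under which the operator norm of $T$ is $\le (|\lambda|+|\mu|)/K + |\nu|/K^2$, a bound \emph{independent of $a,b$}; picking $K$ large makes this $<1$ and inverts $I+T$ by Neumann series on the whole rectangle at once. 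Your approach is somewhat cleaner in that it avoids the patching step entirely and gives the explicit representation $\phi=\sum_{n\ge0}(-T)^n g$; the paper's is more elementary (no weighted norm needed) at the cost of the extension argument.

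One small imprecision in your wording: $I+T$ is not itself a contraction. What you have shown is that $\|T\|_K<1$, hence $I+T$ is boundedly invertible by Neumann series; equivalently, the affine map $\phi\mapsto g-T\phi$ is a contraction in $\|\cdot\|_K$, and its unique fixed point is the solution. The mathematics is right, but say it that way.
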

\begin{proof}
Because of the invariance of the form of the equation with respect to translations, it suffices to prove the claim for small $a$ and $b$; we will require that
$$
 (|\lambda|+|\mu|+|\nu|)(a+b+ab)<1.
$$

 Let $\mathbf C_{a,b}$ denote the Banach space of continuous functions on $[0,a]\times [0,b]$ equipped
 with the supremum norm. Let $\Theta:\mathbf C_{a,b}\to \mathbf C_{a,b}$ be defined through
 \begin{multline*}
  [\Theta f](X,Y)\\=g(X,Y)-\lambda \int_0^X f(x,Y) dx  -\mu \int_0^Y f(X,y) dy-\nu \int_0^X \int_0^Y f(x,y)dxdy.
 \end{multline*}
 We claim that for sufficiently small $a$, $b$ the map $\Theta$ is a contraction. Indeed,
 \begin{multline*}
  \|\Theta f_1 -\Theta f_2\|= \sup_{0\le X\le a, \, 0 \le Y \le b} \Bigl| \lambda \int_0^X (f_1(x,Y)-f_2(x,Y))dx
 \\ + \mu \int_0^Y (f_1(X,y)-f_2(X,y))dy+\nu \int_0^X \int_0^Y (f_1(x,y)-f_2(x,y))dxdy \Bigr| \\ \le
 (|\lambda|+|\mu|+|\nu|)(a+b+ab) \|f_1-f_2\|.
 \end{multline*}
 By the contraction mapping principle (Banach fixed--point theorem), there exists a unique $\phi$ such that $\Theta \phi=\phi$,
 which gives the unique solution to \eqref{eq_integrated_telegraph}.
\end{proof}
Differentiating \eqref{eq_integrated_telegraph}, we rewrite it as a partial differential equation (with $\tilde g=g_{xy}$)
\begin{equation}
\label{eq_telegraph_general}
 \phi_{xy}(x,y)+ \lambda \phi_y(x,y)+ \mu \phi_x(x,y)+\nu \phi(x,y) = \tilde g(x,y), \quad x,y>0.
\end{equation}
 For various choices of $\lambda$, $\mu$, $\nu$ and $\tilde g$ this equation has various names, e.g.\
 the telegraph equation or Klein--Gordon equation.

 The solutions to \eqref{eq_telegraph_general} with different $\lambda$, $\mu$, $\nu$ are readily
 related to each other by an observation that if $\phi$ solves \eqref{eq_telegraph_general}, then $\psi(x,y)=e^{wx + vy}\phi(x,y)
 $ solves
 \begin{equation}
 \label{eq_telegraph_transform}
 \psi_{xy}+
 (\lambda-w) \psi_y +
 (\mu-v) \psi_x+(\nu-w\mu-v\lambda+wv) \psi = \tilde g(x,y) \exp(wx+vy).
  \end{equation}

 \begin{proposition} \label{Proposition_PDE_solution}
  Take $a,b>0$ and consider the equation \eqref{eq_telegraph_general} on an unknown continuous
  function $\phi:[0,a]\times [0,b]\to\mathbb R$ with continuous mixed derivative $\phi_{xy}$ in the interior
  of the rectangle. If $\tilde g(x,y)$ is continuous, and
  \eqref{eq_telegraph_general} is supplemented with boundary condition
  $$
   \phi(x,0)=\chi(x),\quad \phi(0,y)=\psi(y),
  $$
  with given continuously differentiable $\chi$ and $\psi$ that have the same value at the origin, then \eqref{eq_telegraph_general} has a unique
  solution.
 \end{proposition}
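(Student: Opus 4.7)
The plan is to reduce the boundary value problem for the PDE \eqref{eq_telegraph_general} to the integral equation \eqref{eq_integrated_telegraph} already handled by Proposition \ref{Proposition_integrated_PDE_solution}. Concretely, I would integrate both sides of \eqref{eq_telegraph_general} over $[0,X]\times[0,Y]$. The mixed derivative term telescopes to $\phi(X,Y)-\chi(X)-\psi(Y)+\chi(0)$; the $\lambda\phi_y$ and $\mu\phi_x$ terms each yield $\lambda\int_0^X\phi(x,Y)dx$ and $\mu\int_0^Y\phi(X,y)dy$, plus one-dimensional boundary contributions involving $\chi$ and $\psi$; and the $\nu\phi$ term produces the double integral $\nu\int_0^X\int_0^Y\phi\,dx\,dy$. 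Rearranging puts $\phi$ into the form of \eqref{eq_integrated_telegraph} with the explicit forcing
\[
 g(X,Y)=\chi(X)+\psi(Y)-\chi(0)+\lambda\int_0^X\chi(x)dx+\mu\int_0^Y\psi(y)dy+\int_0^X\!\!\int_0^Y\tilde g(x,y)\,dx\,dy,
\]
which is continuous since $\tilde g,\chi,\psi$ are. Proposition \ref{Proposition_integrated_PDE_solution} then supplies a unique continuous $\phi$ on a small rectangle.

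Next I would promote this to the full rectangle $[0,a]\times[0,b]$. The form of \eqref{eq_integrated_telegraph} is translation-invariant, as already noted in the proof of Proposition \ref{Proposition_integrated_PDE_solution}, so one may iterate the fixed-point construction on translates of a small rectangle and glue via uniqueness; alternatively, and more cleanly, the same contraction argument carried out in the weighted norm $\|f\|_\kappa=\sup_{(x,y)\in[0,a]\times[0,b]}e^{-\kappa(x+y)}|f(x,y)|$ with $\kappa$ sufficiently large produces existence and uniqueness on the whole rectangle in one step.

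To close the loop I must verify that a solution of the integral equation actually solves the original PDE with the prescribed boundary data. Setting $Y=0$ in the integral equation reduces it, after cancellations using $\chi(0)=\psi(0)$, to the one-dimensional Volterra equation $\phi(X,0)+\lambda\int_0^X\phi(x,0)dx=\chi(X)+\lambda\int_0^X\chi(x)dx$, which is satisfied by $\chi$; by uniqueness for Volterra equations $\phi(X,0)=\chi(X)$, and symmetrically $\phi(0,Y)=\psi(Y)$. Differentiating the integral equation once in $X$ and once in $Y$ is then legitimate because $\tilde g$ is continuous and $\chi,\psi$ are $C^1$, and this recovers \eqref{eq_telegraph_general} together with continuity of $\phi_{xy}$ in the interior. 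For uniqueness, any PDE solution with the prescribed traces on the axes satisfies the integral equation by the same integration by parts, so the uniqueness part of Proposition \ref{Proposition_integrated_PDE_solution} applies.

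The only slightly delicate point is extending the local existence to the full rectangle while ensuring that $\phi_{xy}$ remains continuous globally; the weighted-norm reformulation is the cleanest way to handle this, since the right-hand side $g$ inherits its regularity directly from the hypotheses on $\tilde g$, $\chi$, and $\psi$, so continuity of $\phi_{xy}$ follows at once from differentiating the integral equation without any matching argument across subrectangles.
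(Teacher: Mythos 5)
Your reduction to the integral equation \eqref{eq_integrated_telegraph} follows the same basic strategy as the paper's, but you keep $\lambda,\mu$ in the picture, whereas the paper first applies the exponential transformation \eqref{eq_telegraph_transform} to reduce to $\lambda=\mu=0$. This is more than cosmetic. With $\lambda=\mu=0$ the integrated relation rearranges to $\phi(X,Y)=\chi(X)+\psi(Y)-\chi(0)+\int_0^X\int_0^Y\bigl(\tilde g-\nu\phi\bigr)\,dx\,dy$, so the boundary traces are visibly built in (set $X=0$ or $Y=0$), and the existence and continuity of $\phi_{xy}$ is immediate because $\tilde g-\nu\phi$ is continuous. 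Keeping $\lambda,\mu\neq 0$ forces you to add the Volterra-uniqueness step to recover $\phi(x,0)=\chi(x)$ and $\phi(0,y)=\psi(y)$, which you correctly supply, but it also leaves a genuine gap in your claim that ``continuity of $\phi_{xy}$ follows at once from differentiating the integral equation'': differentiating $\mu\int_0^Y\phi(X,y)\,dy$ in $X$ already presupposes that $\phi_X$ exists, and symmetrically differentiating $\lambda\int_0^X\phi(x,Y)\,dx$ in $Y$ presupposes $\phi_Y$. You can close this by freezing $X$, reading the integral equation as a one-dimensional Volterra equation in $Y$ (as you already do on the boundary), solving it explicitly, and only then differentiating in $X$; but this bootstrap must be spelled out, and the paper's preliminary transformation sidesteps it entirely.

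Separately, the paragraph on promoting local existence to the full rectangle is superfluous: Proposition \ref{Proposition_integrated_PDE_solution} as stated already gives existence and uniqueness on all of $[0,a]\times[0,b]$ for arbitrary $a,b>0$; the smallness restriction appears only inside its proof, where it is removed via the translation-invariance remark. No gluing or weighted-norm reformulation is needed, and you can simply invoke the proposition directly.
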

 \begin{remark} When the boundary data or $\tilde g(x,y)$ are less regular,
 then one need to understand the solution $\phi$ in a generalized sense through \eqref{eq_integrated_telegraph}, \eqref{eq_solution_integrated}.
 In the next section we provide an explicit formula \eqref{eq_inhom_solution_integral} for the solution, which can be also used
 for extending to more general initial data, see Remark \ref{Remark_by_parts} below.
 \end{remark}
 \begin{proof}[Proof of Proposition \ref{Proposition_PDE_solution}]
  Using transformation \eqref{eq_telegraph_transform} if necessary, we may and will consider only
  the case $\lambda=\mu=0$. We integrate the equation to get
  \begin{multline}
  \label{eq_solution_integrated}
   \phi(X,Y)-\phi(X,0)-\phi(0,Y)+\phi(0,0)+\nu \int_0^X \int_0^Y \phi(x,y)dx dy \\ =
   \int_0^X \int_0^Y \tilde g(x,y) dx dy,
  \end{multline}
  which is \eqref{eq_integrated_telegraph} with
  $$
   g(X,Y)= \int_0^X \int_0^Y \tilde g(x,y) dx dy +\chi(X)+\psi(Y)-\chi(0).
  $$
  By Proposition \ref{Proposition_integrated_PDE_solution}, there is a unique continuous solution.
  Since $\phi(X,Y)$ in \eqref{eq_solution_integrated} is given by the sum of double integrals of continuous
  functions and two other continuously differentiable functions, its mixed partial derivative
  exists and is continuous. Thus, we can differentiate \eqref{eq_solution_integrated} returning to \eqref{eq_telegraph_general}.
 \end{proof}

\subsection{Solutions as contour integral}

Define the \emph{Riemann function} (for the equation \eqref{eq_general_PDE} below) through

\begin{multline}
 \R(X,Y; x,y)=\frac{1}{2\pi \ii} \oint_{-\beta_1}   \frac{(\beta_2-\beta_1)\, dz}{(z+\beta_1)(z+\beta_2)} \\ \times \exp\left[
 (\beta_1-\beta_2) \left(-(X-x) \frac{z}{z+ \beta_2} + (Y-y) \frac{z}{z+\beta_1}\right)
 \right] ,
\end{multline}
where the integration goes in positive direction and encircles $-\beta_1$, but not $-\beta_2$. Note that we can also integrate in the negative
direction around $-\beta_2$ for the same result, because the residue of the integrand at infinity vanishes.

\begin{theorem} \label{Theorem_general_solution} Consider the equation
 \begin{equation}
 \label{eq_general_PDE}
 \phi_{XY}(X,Y)+ \beta_1 \phi_Y(X,Y)+ \beta_2 \phi_X(X,Y)=u(X,Y), \quad X,Y>0,
\end{equation}
 with boundary conditions
 \begin{equation}
 \label{eq_inhomogeneous_PDE_boundary}
  \phi(x,0)=\chi(x),\quad \phi(0,y)=\psi(y),
 \end{equation}
where $\chi$ and $\psi$ are continuously differentiable with $\psi(0)=\chi(0)$. The
solution (afforded by Proposition \ref{Proposition_PDE_solution}) has the form
\begin{multline}
\label{eq_inhom_solution_integral}
 \phi(X,Y)=\psi(0) \R(X,Y;0,0)+\int_0^Y  \R(X,Y; 0,y)  \bigl(\psi'(y) + \beta_2 \psi(y) \bigr) dy
\\
+\int_0^X \R(X,Y; x,0) \bigl(\chi'(x)+\beta_1\chi(x)\bigr) dx + \int_0^X \int_0^Y \R(X,Y; x,y) u(x,y) dx dy.
\end{multline}
\end{theorem}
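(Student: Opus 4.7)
The plan is to verify directly that the right-hand side of \eqref{eq_inhom_solution_integral} satisfies both the PDE \eqref{eq_general_PDE} and the boundary conditions \eqref{eq_inhomogeneous_PDE_boundary}; uniqueness is already supplied by Proposition \ref{Proposition_PDE_solution}. The entire argument rests on three key properties of the Riemann function $\R(X,Y;x,y)$ that I would establish first: (a) $L\R=0$ with $L=\partial_X\partial_Y+\beta_1\partial_Y+\beta_2\partial_X$, for each fixed $(x,y)$; (b) the diagonal normalization $\R(X,Y;X,Y)=1$; and (c) the edge values $\R(X,Y;X,y)=e^{-\beta_2(Y-y)}$ and $\R(X,Y;x,Y)=e^{-\beta_1(X-x)}$.

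Property (a) follows by differentiating under the integral sign: applying $L$ brings down the bracket
\[
-(\beta_1-\beta_2)^2\frac{z^2}{(z+\beta_1)(z+\beta_2)}+\beta_1(\beta_1-\beta_2)\frac{z}{z+\beta_1}-\beta_2(\beta_1-\beta_2)\frac{z}{z+\beta_2},
\]
which is precisely the one appearing in \eqref{eq_tele_for_contour} and vanishes identically in $z$ after combining the last two terms over a common denominator. Property (b) is immediate: setting $x=X$ and $y=Y$ kills the exponential and leaves a single residue at $z=-\beta_1$ equal to $1$. Property (c) is the main technical step. Setting $x=X$ leaves a contour integral with factor $\exp[(\beta_1-\beta_2)(Y-y)z/(z+\beta_1)]$, which has an essential singularity at $z=-\beta_1$. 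I would substitute $w=z+\beta_1$, factor out the finite piece $e^{(\beta_1-\beta_2)(Y-y)}$, expand both $(\beta_2-\beta_1)/[w(w+\beta_2-\beta_1)]$ and $\exp[-(\beta_1-\beta_2)(Y-y)\beta_1/w]$ as Laurent series in $w$, and read off the $w^{-1}$ residue. The diagonal ($n=m$) contributions assemble into the Taylor series of $e^{-\beta_1(Y-y)}$, yielding $\R(X,Y;X,y)=e^{(\beta_1-\beta_2)(Y-y)}\cdot e^{-\beta_1(Y-y)}=e^{-\beta_2(Y-y)}$. The other edge identity follows by the swap $(X,x)\leftrightarrow(Y,y)$, $\beta_1\leftrightarrow\beta_2$, under which the integrand is invariant once the contour is moved to enclose $-\beta_2$ instead of $-\beta_1$ (permissible since the residue at infinity vanishes, as noted right after the definition of $\R$).

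With (a)--(c) in hand, the boundary conditions are an easy check. Setting $X=0$ in \eqref{eq_inhom_solution_integral} kills the $\chi$-integral and the double integral, and the remaining piece becomes $\psi(0)e^{-\beta_2 Y}+\int_0^Y e^{-\beta_2(Y-y)}[\psi'(y)+\beta_2\psi(y)]\,dy$ by (c). The integrand is exactly $\frac{d}{dy}[e^{\beta_2 y}\psi(y)]\cdot e^{-\beta_2 Y}$, so the integral telescopes to $\psi(Y)-\psi(0)e^{-\beta_2 Y}$ and $\phi(0,Y)=\psi(Y)$; the check $\phi(X,0)=\chi(X)$ is symmetric.

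Finally, I would verify the PDE by applying $L$ termwise to \eqref{eq_inhom_solution_integral}. The term $\psi(0)\R(X,Y;0,0)$ vanishes by (a). For each single integral, the Leibniz rule yields a bulk term $\int(L\R)\cdot(\ldots)$ that vanishes by (a), plus one boundary term from the single moving endpoint; for the $\psi$-integral this is $[\R_X+\beta_1\R](X,Y;0,Y)\cdot(\psi'(Y)+\beta_2\psi(Y))$, which vanishes because (c) gives $\R_X(X,Y;x,Y)=-\beta_1\R(X,Y;x,Y)$. The $\chi$-integral is handled analogously using $\R_Y(X,Y;X,y)=-\beta_2\R(X,Y;X,y)$. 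For the double integral, the bulk term vanishes by (a), the two one-dimensional edge terms vanish by the same cancellations, and the corner term produced by $\partial_X\partial_Y$ at the moving corner $(X,Y)$ evaluates to $\R(X,Y;X,Y)\,u(X,Y)=u(X,Y)$ by (b). Summing, $L\phi=u$. The main technical hurdle is the Laurent-series computation for (c); everything else is a systematic application of the Leibniz rule.
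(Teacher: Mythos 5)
Your proposal is correct and follows essentially the same route as the paper: establish a small set of properties of $\R$ (the PDE $L\R=0$, the diagonal normalization $\R(X,Y;X,Y)=1$, and boundary behavior along the lines $x=X$ and $y=Y$), then verify termwise that the formula satisfies the equation and the boundary data. The one place you go a different way is in establishing your property (c). The paper works with the differential form $[\R_X+\beta_1\R]_{Y=y}=0$, $[\R_Y+\beta_2\R]_{X=x}=0$ (plus the same with lowercase derivatives), and verifies them by differentiating under the integral, simplifying the rational bracket, and observing that the factor $(z+\beta_1)$ in the denominator cancels — so the resulting integrand is holomorphic inside the contour and the integral vanishes. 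You instead integrate that ODE to the explicit edge values $\R(X,Y;X,y)=e^{-\beta_2(Y-y)}$, $\R(X,Y;x,Y)=e^{-\beta_1(X-x)}$ and prove them directly via a Laurent-series residue computation at the essential singularity $w=z+\beta_1=0$. Your computation is correct (the partial-fraction split $\frac{\beta_2-\beta_1}{w(w+\beta_2-\beta_1)}=\frac1w-\frac1{w+\beta_2-\beta_1}$ paired against $e^{s/w}$ indeed produces $e^{-\beta_1(Y-y)}$ after multiplying by the extracted $e^{(\beta_1-\beta_2)(Y-y)}$), but it is more work than the paper's pole-cancellation argument, which sidesteps the essential singularity entirely. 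Your explicit formulas also give you a slightly slicker boundary check — telescoping an exact derivative $\frac{d}{dy}[e^{\beta_2 y}\psi(y)]$ — where the paper integrates by parts using the differential relations. Both are valid and the overall structure is the same.
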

\begin{remark}
\label{Remark_by_parts}
 If we integrate by parts the terms involving $\psi'(y)$ and $\chi'(x)$ in
 \eqref{eq_inhom_solution_integral}, then using the smoothness $\R(X,Y; x,y)$  we get an expression which continuously depends on the
 boundary data $\psi(y)$, $\chi(x)$ (in the supremum norm). This can be used to define the
 solution to \eqref{eq_general_PDE} for non-differentiable $\chi(x)$, $\psi(y)$.
\end{remark}

\begin{proof}
 The function $\R(X,Y; x,y)$ satisfies the following properties, which are checked by direct
 differentiation under the integral sign:
 \begin{enumerate}
  \item $\R_{XY}+\beta_1 \R_Y+\beta_2\R_X=0$,
  \item $[\R_X+\beta_1\R]_{Y=y} =0=[\R_x-\beta_1\R]_{Y=y}$,
  \item $[\R_Y+\beta_2\R]_{X=x}=0=[\R_y-\beta_2\R]_{X=x}$,
  \item $[\R]_{X=x,\, Y=y}=1$.
 \end{enumerate}

Using these properties we apply the differential operator $F\mapsto F_{XY}+\beta_1 F_Y+\beta_2 F_X$
to each term in \eqref{eq_inhom_solution_integral}. The first term gives $0$ by the first property.
The second term gives (using the first two properties)
\begin{multline*}
 \int_0^Y   \bigl(\R_{XY}(X,Y; 0,y)+\beta_1 \R_{Y}(X,Y; 0,y)+\beta_2 \R_{X}(X,Y; 0,y)\bigr)  \\ \times \bigl(\psi'(y) + \beta_2 \psi(y) \bigr) dy
 \\+ \left[ \bigl(\R_X(X,Y; 0,y)+\beta_1 \R(X,Y,0,y)\bigr) (\psi'(y) + \beta_2 \psi(y) \bigr)\right]_{y=Y}=0.
\end{multline*}
The third term also vanishes by similar reasoning with the first and third
properties. The fourth term gives (using all four properties)
\begin{multline*}
 \int_0^X \int_0^Y \bigl(\R_{XY}(X,Y; x,y)+\beta_1 \R_{Y}(X,Y; x,y)+\beta_2 \R_{X}(X,Y; x,y)\bigr) u(x,y) dx dy
 \\
 + \int_0^X [\R_{X}(X,Y; x,y)+\beta_1 \R(X,Y;x,y)]_{y=Y} dx
 \\ + \int_0^Y [\R_{Y}(X,Y;x,y)+\beta_2 \R(X,Y;x,y)]_{x=X} dy
 \\+ [\R(X,Y;x,y) u(x,y)]_{x=X, y=Y}= u(X,Y).
\end{multline*}
We conclude that \eqref{eq_inhom_solution_integral} satisfies
\eqref{eq_general_PDE}. It remains to check the boundary conditions. At $X=0$, the
third and fourth terms in \eqref{eq_inhom_solution_integral} vanish. Integrating by
parts and using the third and fourth properties, we obtain
\begin{multline*}
 \psi(0) \cdot \R(0,Y;0,0)+\int_0^Y  \R(0,Y; 0,y)  \bigl(\psi'(y) + \beta_2 \psi(y) \bigr) dy\\ =
 \R(0,Y; 0,Y)  \psi(Y)-
  \int_0^Y  (\R_y(0,Y; 0,y)-\beta_2\R(0,Y;0,y))  \psi(y) dy= \psi(Y).
\end{multline*}
At $Y=0$, the second and fourth terms in \eqref{eq_inhom_solution_integral} vanish.
Integrating by parts and using the second and fourth properties, we then get
\begin{multline*}
 \chi(0) \R(X,0;0,0)+\int_0^X \R(X,0; x,0) \bigl(\chi'(x)+\beta_1\chi(x)\bigr) dx\\=
 \chi(X) \R(X,0;X,0)+\int_0^X (\R_x(X,0;x,0)-\beta_1 \R(X,0,x,0))\chi(x)dx =\chi(X).
\end{multline*}
\end{proof}

\subsection{Discretization}
The telegraph equation has a natural discretization, which we present here. (We have not seen it in the literature before.)

Consider the following equation for an unknown function $\Phi(x,y)$, $x,y=0,1,2,\dots$:
\begin{equation}
\label{eq_discrete_PDE}
 \Phi(x+1,y+1)-b_1 \Phi(x,y+1)-b_2\Phi(x+1,y)+(b_1+b_2-1)\Phi(x,y)=u(x+1,y+1)
\end{equation}
with a given right-hand side $u$ and subject to boundary conditions
\begin{equation}
\label{eq_discrete_PDE_boundary}
\Phi(x,0)=\chi(x), \quad  \Phi(0,y)=\psi(Y), \quad X,Y=0,1,2,\dots,\quad \chi(0)=\psi(0).
\end{equation}
We take $b_1$ and $b_2$ to be arbitrary distinct real numbers satisfying $0<b_1,b_2<1$. Although, these restrictions can be easily removed if needed (this mould lead to natural modifications of the formulas below).

\begin{proposition}
\label{Proposition_discrete_PDE_unique}
 The equations \eqref{eq_discrete_PDE}, \eqref{eq_discrete_PDE_boundary} have a unique solution.
\end{proposition}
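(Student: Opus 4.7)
The plan is to solve \eqref{eq_discrete_PDE} for the ``newest'' value in the stencil and then run an explicit induction on the anti-diagonal $X+Y$. Rewriting the recurrence as
\begin{equation*}
 \Phi(x+1,y+1)=u(x+1,y+1)+b_1\Phi(x,y+1)+b_2\Phi(x+1,y)-(b_1+b_2-1)\Phi(x,y),
\end{equation*}
one sees that $\Phi$ at the corner $(x+1,y+1)$ is expressed in terms of $\Phi$ at three points whose coordinate sums are strictly smaller than $x+y+2$. Crucially, the coefficient of the new value is $1$, so no invertibility or nondegeneracy issue arises; the only relevant hypotheses on $b_1,b_2$ enter (later) when one estimates the solution, not when one constructs it.

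For existence, I would define $\Phi$ by induction on $n=X+Y$. At $n=0$ set $\Phi(0,0)=\chi(0)=\psi(0)$, the consistency of the two boundary prescriptions at the corner being exactly the assumption in \eqref{eq_discrete_PDE_boundary}. For $n\ge 1$, first set $\Phi(n,0)=\chi(n)$ and $\Phi(0,n)=\psi(n)$ from the boundary data, and for each $(X,Y)$ with $X,Y\ge 1$ and $X+Y=n$ define $\Phi(X,Y)$ by the displayed recursion applied with $(x,y)=(X-1,Y-1)$; the three inputs $\Phi(X-1,Y)$, $\Phi(X,Y-1)$, $\Phi(X-1,Y-1)$ have already been constructed at previous stages of the induction. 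By construction the resulting $\Phi$ satisfies both \eqref{eq_discrete_PDE} and \eqref{eq_discrete_PDE_boundary}.

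For uniqueness, let $\Phi_1$ and $\Phi_2$ be two solutions and put $\Psi=\Phi_1-\Phi_2$. Then $\Psi$ satisfies the homogeneous version of \eqref{eq_discrete_PDE} (with $u\equiv 0$) and vanishes identically on the axes. Running the same induction on $X+Y$, we get $\Psi(X,Y)=0$ for every $(X,Y)\in\mathbb Z_{\ge 0}^2$, which gives $\Phi_1=\Phi_2$.

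There is no genuine obstacle here: the discretization is a diagonally marching explicit scheme, and the only sanity check is that the boundary data are compatible at the origin, which is built into the hypothesis $\chi(0)=\psi(0)$. The real content of this section lies not in this well-posedness statement but in the ensuing analogue of the Riemann-function representation, where one will need to identify the correct discrete Riemann kernel; the present proposition is simply the discrete counterpart of Proposition \ref{Proposition_PDE_solution} and requires no fixed-point argument because at the discrete level the ``integral equation'' collapses to an explicit recurrence.
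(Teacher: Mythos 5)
Your proof is correct and uses essentially the same idea as the paper: solve \eqref{eq_discrete_PDE} for $\Phi(x+1,y+1)$ and propagate the boundary data recursively along anti-diagonals $X+Y=\mathrm{const}$. You spell out the uniqueness step (difference of two solutions vanishes by the same recursion) a bit more explicitly than the paper does, but the argument is the same.
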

\begin{proof}
 Using \eqref{eq_discrete_PDE} and starting from \eqref{eq_discrete_PDE_boundary}, we
 recursively define the values of $\Phi(x,y)$ first for the point $(1,1)$, then for the
 points $(1,2)$, $(2,1)$, then for the points $(1,3)$, $(2,2)$, $(3,1)$, etc.
\end{proof}

Define the discrete Riemann function through
\begin{multline}
\label{eq_Discrete_R}
 \Rd(X,Y; x,y)=\frac{1}{2\pi \ii} \oint_{-\frac{1}{b_2(1-b_1)}} \frac{ (b_2-b_1)\, dz}{(1+ b_2(1-b_1) z)(1+ b_1(1-b_2) z)} \\ \times \left(\frac{1+  b_1(1-b_1)z}{1+ b_2(1-b_1) z}
  \right)^{X-x} \left( \frac{1+b_2 (1-b_2)z}{1+b_1(1-b_2)z} \right)^{Y-y},
\end{multline}
where the integration goes in positive direction and encircles $-\frac{1}{b_2(1-b_1)}$, but not $-\frac{1}{b_1(1-b_2)}$. Note that we can also integrate in the negative
direction around ${-\frac{1}{b_1(1-b_2)}}$ for the same result.

\begin{theorem} \label{Theorem_discrete_PDE_through_integrals} The solution to \eqref{eq_discrete_PDE}, \eqref{eq_discrete_PDE_boundary} has the form
\begin{multline}
\label{eq_discrete_PDE_solution}
 \Phi(X,Y)=\chi(0) \Rd(X,Y;0,0)+\sum_{y=1}^Y \Rd(X,Y;0,y) \bigl(\psi(y)-b_2 \psi(y-1)\bigr)\\+\sum_{x=1}^X \Rd(X,Y;x,0) \bigl(\chi(x)-b_1 \chi(x-1)\bigr)
  +\sum_{x=1}^X \sum_{y=1}^Y \Rd(X,Y;x,y) u(x,y).
\end{multline}
\end{theorem}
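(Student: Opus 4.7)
The plan is to mirror the strategy used for the continuous analogue in Theorem \ref{Theorem_general_solution}: establish four key properties of the discrete Riemann function $\Rd$ by residue calculus, and then verify directly that the right-hand side of \eqref{eq_discrete_PDE_solution} satisfies both the discrete equation \eqref{eq_discrete_PDE} and the prescribed boundary data. Uniqueness is already in hand via Proposition \ref{Proposition_discrete_PDE_unique}, so this is all that remains.

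The starting point is a single algebraic identity. Writing $A(z)=\tfrac{1+b_1(1-b_1)z}{1+b_2(1-b_1)z}$ and $B(z)=\tfrac{1+b_2(1-b_2)z}{1+b_1(1-b_2)z}$, so that the integrand in \eqref{eq_Discrete_R} is $\tfrac{b_2-b_1}{(1+b_2(1-b_1)z)(1+b_1(1-b_2)z)}\,A(z)^{X-x}B(z)^{Y-y}$, a direct computation gives
\[
A(z)-b_1=\frac{(1-b_1)(1+b_1(1-b_2)z)}{1+b_2(1-b_1)z},\qquad B(z)-b_2=\frac{(1-b_2)(1+b_2(1-b_1)z)}{1+b_1(1-b_2)z},
\]
hence $(A-b_1)(B-b_2)\equiv(1-b_1)(1-b_2)$, which rearranges to $A(z)B(z)-b_1B(z)-b_2A(z)+(b_1+b_2-1)\equiv 0$. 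From this identity I would deduce four properties of $\Rd$, each by inserting an appropriate factor into the integrand and evaluating via residues: (P1) the discrete homogeneous equation in the $(X,Y)$ variables, obtained from the identity itself; (P2) the pair of relations $\Rd(X+1,y;x,y)=b_1\Rd(X,y;x,y)$ and $\Rd(X,y;x,y)=b_1\Rd(X,y;x+1,y)$ holding on $Y=y$, obtained by inserting $A-b_1$, which cancels the factor $1+b_1(1-b_2)z$ in the denominator so that the only finite pole is inside the contour, while the integrand decays like $z^{-2}$ at infinity whenever $X\ge x$, forcing the residue at infinity and hence the integral to vanish; (P3) the symmetric pair on $X=x$, obtained from $B-b_2$; (P4) $\Rd(x,y;x,y)=1$, from a direct residue at $z=-1/(b_2(1-b_1))$ using the identity $b_2(1-b_1)-b_1(1-b_2)=b_2-b_1$.

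With these in hand I would apply the discrete operator $\mathcal{D}[\Phi](X,Y):=\Phi(X+1,Y+1)-b_1\Phi(X,Y+1)-b_2\Phi(X+1,Y)+(b_1+b_2-1)\Phi(X,Y)$ termwise to \eqref{eq_discrete_PDE_solution}. Property (P1) annihilates $\chi(0)\Rd(X,Y;0,0)$ and the bulk of every sum. When $X$ or $Y$ increments, the summation limits grow by one and the single sums acquire boundary contributions at $y=Y+1$ or $x=X+1$, killed by the first relations in (P2) or (P3); the double sum produces analogous marginal strips killed by the same relations, plus an isolated corner term $\Rd(X+1,Y+1;X+1,Y+1)\,u(X+1,Y+1)$ which by (P4) equals precisely $u(X+1,Y+1)$. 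For the boundary at $X=0$ the $\chi$-sum and $u$-sum are empty, and Abel summation on the remaining sum gives $\Rd(0,Y;0,Y)\psi(Y)=\psi(Y)$, a telescoping sum of $\Rd(0,Y;0,y)-b_2\Rd(0,Y;0,y+1)$ that vanishes by the second relation in (P3), and a leftover $-b_2\Rd(0,Y;0,1)\psi(0)$ which cancels $\chi(0)\Rd(0,Y;0,0)$ thanks to the $y=0$ instance of the same identity together with $\chi(0)=\psi(0)$. The case $Y=0$ is symmetric, and $(X,Y)=(0,0)$ is immediate from (P4).

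The main obstacle I expect is establishing (P2) and (P3) cleanly: the contour in \eqref{eq_Discrete_R} encircles only one of the two poles of the prefactor, so after multiplying by $A-b_1$ one has to verify both that the pole at $-1/(b_1(1-b_2))$ is genuinely cancelled and that, with the signs of $X-x$ and $Y-y$ kept under control (they are nonnegative throughout \eqref{eq_discrete_PDE_solution}), the residue at infinity of the resulting integrand really does vanish. Once the residue calculus is in place, the remainder is routine discrete bookkeeping that parallels the continuous argument step for step.
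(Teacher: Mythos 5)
Your proposal is correct and takes essentially the same route as the paper: the paper likewise isolates the same four properties of $\Rd$ and then applies the discrete operator termwise, though it simply asserts the properties ``directly from the definition'' while you spell out the underlying algebraic identity $(A-b_1)(B-b_2)=(1-b_1)(1-b_2)$ and the residue/pole-cancellation argument, and it telescopes the boundary sum directly where you invoke Abel summation — cosmetic differences only.
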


\begin{proof}
Directly from the definition, we see that the function $\Rd$ satisfies:
\begin{enumerate}
 \item  $\Rd(X+1,Y+1)-b_1 \Rd(X,Y+1)-b_2\Rd(X+1,Y)\\ +(b_1+b_2-1) \Rd(X,Y)=0$,

\smallskip
 \item  $[\Rd(X+1)-b_1\Rd(X)]_{y=Y}=0=[\Rd(x-1)-b_1\Rd(x)]_{y=Y}$,

\smallskip
 \item  $[\Rd(Y+1)-b_2\Rd(Y)]_{x=X}=0=[\Rd(y-1)-b_2\Rd(y)]_{x=X}$,

\smallskip
 \item $[\Rd(X,Y;x,y)]_{x=X,y=Y}=1$.
\end{enumerate}

We apply the difference operator $F\mapsto F(X+1,Y+1)-b_1 F(X,Y+1)-b_2 F(X+1,Y)+(b_1+b_2-1)
F(X,Y)$ to each of the four terms of \eqref{eq_discrete_PDE_solution} using the properties of
$\Rd$. The first term gives zero by the first property. The second term gives (using the first and
second properties)
\begin{multline}
 \sum_{y=1}^Y \bigl(\Rd(X+1,Y+1;0,y)-b_1\Rd(X,Y+1;0,y)-b_2\Rd(X+1,Y;0,y)\\ +(b_1+b_2-1) \Rd(X,Y;0,y) \bigr) \bigl(\psi(y)-b_2 \psi(y-1)\bigr)
 \\ + \bigl(\Rd(X+1,Y+1;0,Y+1)-b_1 \Rd(X,Y+1;0,Y+1)\bigr) \bigl(\psi(Y+1)-b_2 \psi(Y)\bigr)=0.
\end{multline}
The third term gives zero for similar reasons via the first and third properties.
The fourth term gives (using all four properties)
\begin{multline}
 \sum_{x=1}^X \sum_{y=1}^Y \bigl(\Rd(X+1,Y+1;x,y)-b_1 \Rd(X,Y+1;x,y)-b_2 \Rd(X+1,Y;x,y)\\ +(b_1+b_2-1) \Rd(X,Y;x,y) \bigr) u(x,y)
\\+ \sum_{x=1}^X \bigl( \Rd(X+1,Y+1;x,Y+1)-b_1 \Rd(X,Y+1;x,Y+1)\bigr) u(x,Y+1)
\\+ \sum_{y=1}^Y \bigl( \Rd(X+1,Y+1;X+1,y) - b_2 \Rd(X+1,Y; X+1,y) \bigr) u(X+1,y)
\\+ \Rd(X+1,Y+1;X+1,Y+1) u(X+1,Y+1)= u(X+1,Y+1).
\end{multline}
We conclude that \eqref{eq_discrete_PDE_solution} satisfies \eqref{eq_discrete_PDE}, and it remains
to check the boundary conditions.

At $X=0$, note that by the third property of $\Rd$, $\Rd(0,Y;0,y)=b_2^{-y}
\Rd(0,Y;0,0)$. Therefore, we have (using the fourth property as well)
\begin{multline}
 \Phi(0,Y)= \Rd(0,Y;0,0) \left(\psi(0) +\sum_{y=1}^Y b_2^{-y} \bigl(\psi(y)-b_2 \psi(y-1)\bigr) \right)\\=
 \Rd(0,Y;0,0) \psi(Y) b_2^{-Y}= \Rd(0,Y;0,Y)\psi(Y)=\psi(Y).
\end{multline}
At $Y=0$, by the second property, $\Rd(X,0;x,0)=b_1^{-x} \Rd(X,0;0,0)$, and thus,
\begin{multline}
 \Phi(X,0)= \Rd(X,0;0,0) \left(\chi(0) +\sum_{x=1}^X b_1^{-x} \bigl(\chi(x)-b_1 \chi(x-1)\bigr) \right)\\=
 \Rd(X,0;0,0) \chi(X) b_1^{-X}= \Rd(X,0;X,0)\chi(X)=\chi(X). \qedhere
\end{multline}
\end{proof}

\subsection{Solutions as path integrals: discrete case}

\label{Section_paths_discrete}

In this section we interpret the formula of Theorem \ref{Theorem_discrete_PDE_through_integrals} as an expectation of a certain path integral. Essentially, this is a development of a version of the Feynman-Kac formula for the difference equation \eqref{eq_discrete_PDE}.

\bigskip

\begin{figure}[t]
\begin{center}
{\scalebox{0.6}{\includegraphics{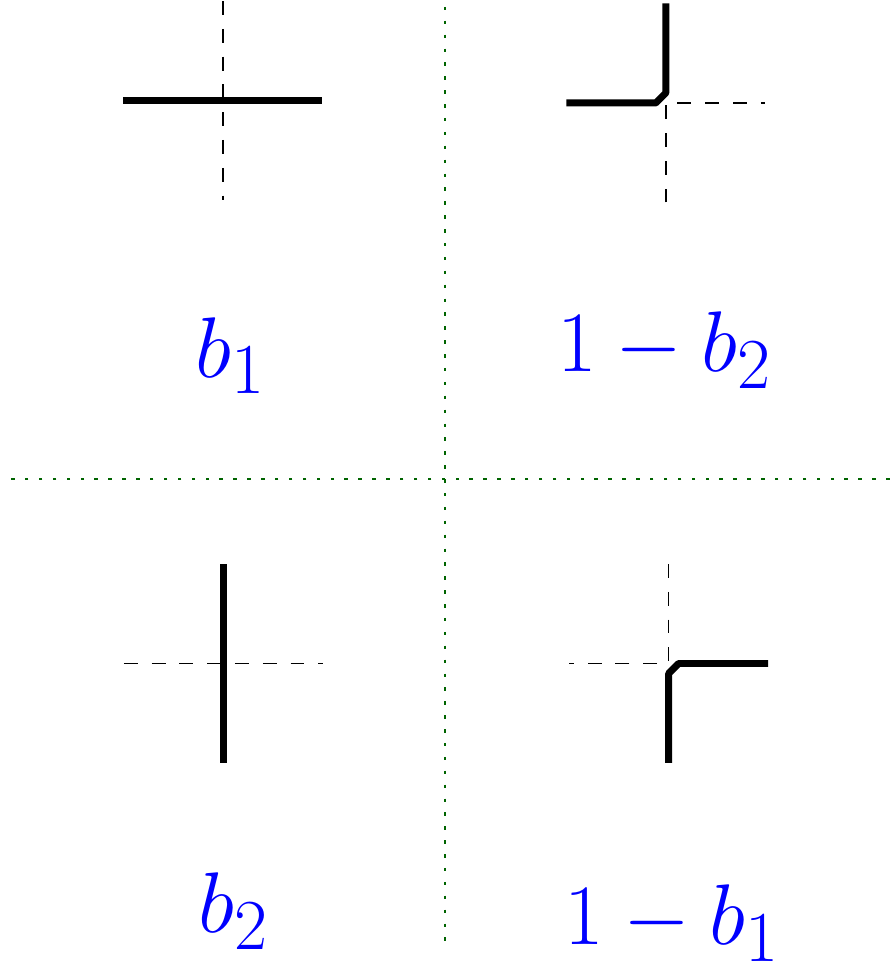}}}
 \caption{The weights of the random walk towards the origin.
 \label{Fig_weights_rev}}
\end{center}
\end{figure}

Consider a random path that starts at a point $(X,Y)$ in the positive quadrant and moves in the direction of decreasing $x$ and $y$. At each step, the path moves by one to the left, or down, or makes a turn. The choices are made according to probabilities of Figure \ref{Fig_weights_rev}. These weights are obtained from the weights of Figure \ref{Fig_weights} by central symmetry $(x,y)\mapsto (-x,-y)$. In other words, the weights of the straight segments remained the same, while the weights of corners were swapped in order to preserve stochasticity.

\begin{theorem}
\label{Theorem_discrete_as_expectation_boundary}
  Consider the equation \eqref{eq_discrete_PDE}, \eqref{eq_discrete_PDE_boundary} with
  $u(X,Y)=0$, $X,Y\ge 0$, and $\chi(0)=\psi(0)=0$. For convenience,
  extend $\chi(-a)=\psi(-a)=0$, $a>0$. The solution $\Phi(X,Y)$ admits
  the following stochastic formula. Take a (reversed, with probabilities of
   Figure \ref{Fig_weights_rev}) path leaving $(X+1,Y)$ to the left in horizontal direction, and
    let $\mathbf y$ denote the ordinate of the first point when it reaches the
     line $x=0$. Take another path leaving $(X,Y+1)$ down in vertical direction, and
      let $\mathbf x$ denote the abscissa of the first point when it
       reaches the line $y=0$. Then
  \begin{equation}
  \label{eq_solution_as_expectation}
   \Phi(X,Y)=\E \left[\psi(\mathbf y) \right]+ \E \left[\chi(\mathbf x) \right].
  \end{equation}
\end{theorem}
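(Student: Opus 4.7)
The plan is to appeal to the uniqueness of the solution to \eqref{eq_discrete_PDE}--\eqref{eq_discrete_PDE_boundary} afforded by Proposition \ref{Proposition_discrete_PDE_unique}: it suffices to verify that $\Psi(X,Y) := \E[\psi(\mathbf{y})] + \E[\chi(\mathbf{x})]$ satisfies the discrete telegraph equation with $u \equiv 0$, together with the prescribed boundary values $\chi$ and $\psi$. By the linearity of \eqref{eq_discrete_PDE} and the evident $x \leftrightarrow y$ symmetry exchanging $(b_1, b_2)$ and $(\psi,\chi)$, the two summands of $\Psi$ can be treated independently, and the arguments for them mirror each other.

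Focusing on the $\psi$-summand, I would introduce two auxiliary functions: $F_L(X,Y)$ equal to $\E[\psi(\mathbf{y})]$ conditional on the walk having just entered vertex $(X,Y)$ from the right, and $F_D(X,Y)$ defined analogously except with the walk entering $(X,Y)$ from above. Then the $\psi$-summand of $\Psi$ is precisely $F_L(X, Y)$. A one-step conditioning with the weights of Figure \ref{Fig_weights_rev} produces the coupled recursions (for $X, Y \ge 1$)
\begin{equation*}
F_L(X,Y) = b_1 F_L(X-1,Y) + (1-b_1) F_D(X, Y-1),
\end{equation*}
\begin{equation*}
F_D(X,Y) = b_2 F_D(X, Y-1) + (1-b_2) F_L(X-1, Y),
\end{equation*}
together with the boundary identifications $F_L(0,Y) = F_D(0,Y) = \psi(Y)$ and $F_L(X,0) = F_D(X,0) = 0$. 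The latter uses the observation that once a trajectory reaches $\{y=0\}$ it is forced to drift leftward to the origin, where $\psi(0) = 0$ is recorded.

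The main computational step is to verify that $F_L$ solves the discrete telegraph equation. Substituting the $F_L$-recursion into $F_L(X+1, Y+1)$ and $F_L(X+1, Y)$ and using the algebraic identity $b_1 + b_2 - 1 - b_1 b_2 = -(1-b_1)(1-b_2)$, the left-hand side of \eqref{eq_discrete_PDE} collapses to
\begin{equation*}
(1-b_1)\bigl[F_D(X+1, Y) - b_2 F_D(X+1, Y-1) - (1-b_2) F_L(X, Y)\bigr],
\end{equation*}
which vanishes by the $F_D$-recursion for $Y \ge 1$ and equals $0$ directly for $Y = 0$ since $F_D(X+1, 0) = 0$. The boundary values $F_L(0, Y) = \psi(Y)$ and $F_L(X, 0) = 0$ match the prescription for the $\psi$-summand. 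The analogous calculation handles the $\chi$-summand, and Proposition \ref{Proposition_discrete_PDE_unique} then yields $\Psi = \Phi$. I expect the only delicate point to be pinning down the walk's behavior at the coordinate axes; the hypothesis $\psi(0) = \chi(0) = 0$ guarantees that any reasonable convention produces the same answer, so this subtlety is cosmetic rather than substantive.
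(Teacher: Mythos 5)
Your proof is correct, and it takes a genuinely different route from the paper's. The paper also reduces to indicator boundary data $\psi = \mathbf{1}_{y = y_0}$ and invokes the uniqueness from Proposition~\ref{Proposition_discrete_PDE_unique}, but it verifies the discrete telegraph equation indirectly: it first identifies the relevant exit probability with a one-path height-function expectation of the (forward) stochastic six-vertex model, applies the four-point relation of Theorem~\ref{Theorem_4_point} to exhibit that quantity as a solution of \eqref{eq_discrete_PDE}, expresses it via the discrete Riemann function $\Rd$ through Theorem~\ref{Theorem_discrete_PDE_through_integrals}, and finally passes to the reversed walk by a coordinate flip that turns the shift identity for $\Rd$ into the needed PDE in the $(x,y)$ arguments. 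Your proof bypasses all of this machinery: the one-step conditioning on the direction of motion yields the coupled first-order recursions for $F_L$ and $F_D$, and eliminating $F_D$ using the identity $b_1 + b_2 - 1 - b_1 b_2 = -(1-b_1)(1-b_2)$ collapses the discrete telegraph operator applied to $F_L$ directly. This is shorter and more self-contained --- it never touches the six-vertex model or $\Rd$ --- though it forgoes the explicit $\Rd$-formula for the hitting law that the paper's detour produces and later reuses in the proof of Theorem~\ref{Theorem_discrete_as_expectation_noise}.

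Two cosmetic points: at $Y = 0$ your collapsed bracket is $F_D(X+1,0) - b_2 F_D(X+1,-1) - (1-b_2)F_L(X,0)$, so you also need $F_D(X+1,-1) = 0$ and $F_L(X,0)=0$ in addition to $F_D(X+1,0)=0$ (all three hold, since $\psi$ vanishes on $(-\infty,0]$). And the remark that a trajectory hitting $\{y=0\}$ is ``forced to drift leftward to the origin'' is not accurate --- the walk may cross below the axis --- but the extension $\psi(-a)=0$ together with $\psi(0)=0$ makes $F_L(X,0)=F_D(X,0)=0$ regardless, so the conclusion stands.
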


\smallskip

We will give a proof a little later, and now we will see what happens when $u\ne 0$.

Suppose that we are given a trajectory $\mathcal T$ of a path build out of the
blocks of Figure \ref{Fig_weights_rev}. For a point $(x,y)\in\mathbb Z \times
\mathbb Z$ we say that $(x,y)$ is \emph{weakly below} $\mathcal T$, if any of the
points of the square $(x-1/2,x+1/2)\times (y-1/2,y+1/2)$ is below (i.e., has a
smaller vertical coordinate and the same horizontal coordinate) than a point of the
path. Similarly, we say that $(x,y)$ is \emph{weakly to the left} from $\mathcal
T$, if any point of $(x-1/2,x+1/2)\times(y-1/2,y+1/2)$ is to the left of a point of
the path.

Now suppose that we are given two paths $\mathcal T_{-}$ and $\mathcal T_{|}$. Define
\begin{equation}
\label{eq_signed_indicator}
 \mathcal I_{\mathrm {between}}(x,y)= \mathbf 1_{(x,y) \text{ is weakly below } \mathcal T_-}+
 \mathbf 1_{(x,y)\text{ is weakly to the left from } \mathcal T_{|}}-1.
\end{equation}
In other words, $\mathcal I_{\mathrm {between}}(x,y)$ is $\pm 1$ between the paths $\mathcal
T_-$, $\mathcal T_+$ and vanishes otherwise. The sign depends on which path is higher. An
illustration of the values of this function is shown in Figure \ref{Fig_between}.

\begin{figure}[t]
\begin{center}
{\scalebox{0.6}{\includegraphics{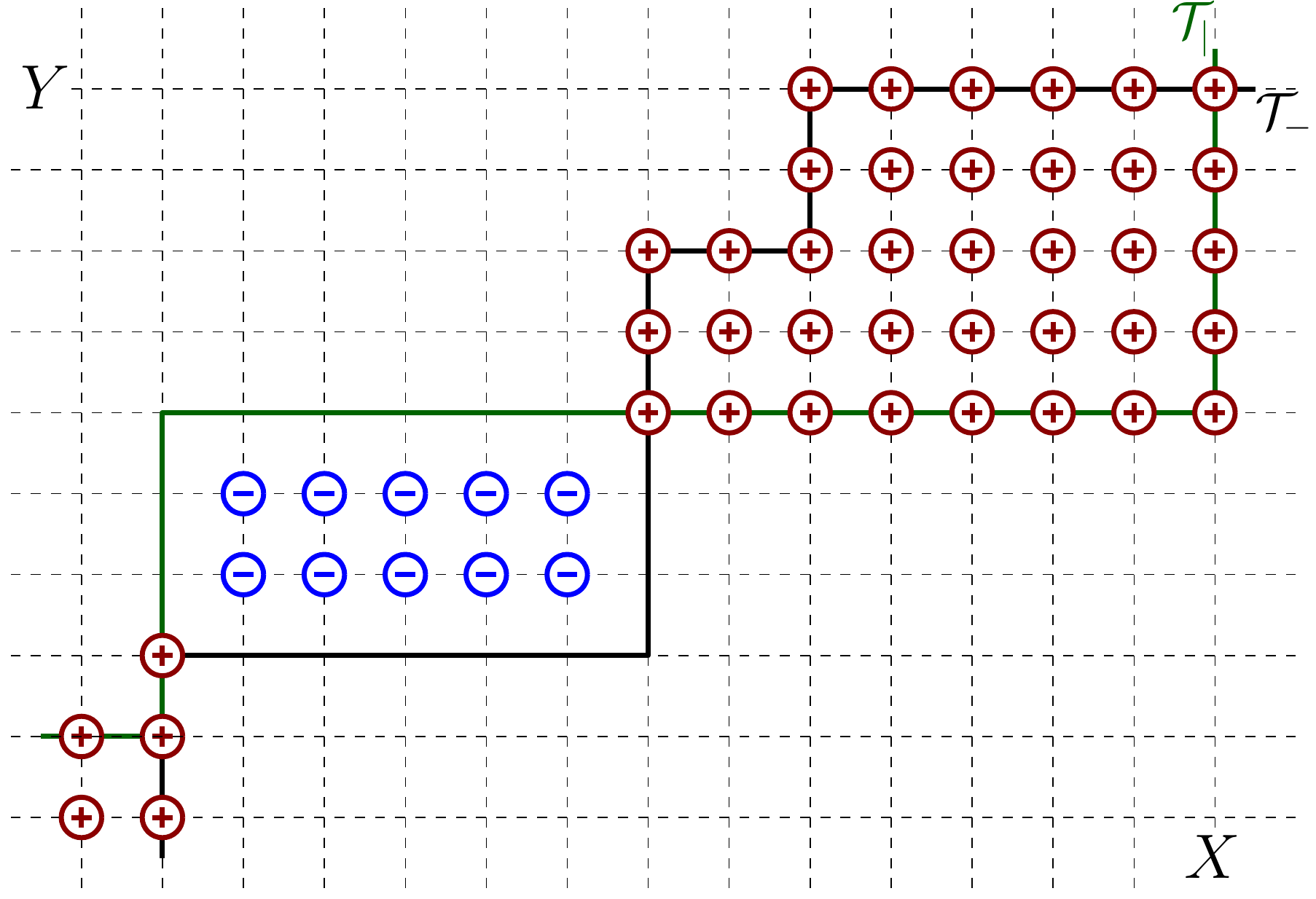}}}
 \caption{Two paths $\mathcal T_-$, $\mathcal T_|$, and the function $\mathcal I_{\mathrm {between}}(x,y)$: values $+1$ and $-1$ are shown by $\textcircled{+}$ and $\textcircled{-}$, respectively.
 \label{Fig_between}}
\end{center}
\end{figure}

\begin{theorem}
\label{Theorem_discrete_as_expectation_noise}
  Consider the equation \eqref{eq_discrete_PDE}, \eqref{eq_discrete_PDE_boundary} with
  $\chi(x)=\psi(y)=0$, $x,y\ge 0$.  The solution $\Phi(X,Y)$ admits
  the following stochastic formula. Take a (reversed, with probabilities of
   Figure \ref{Fig_weights_rev}) path $\mathcal T_-$ leaving $(X+1,Y)$ to the left in horizontal direction and
    another path $\mathcal T_|$ leaving $(X,Y+1)$ down in vertical direction. Then
  \begin{equation}
  \label{eq_solution_as_expectation_noise}
   \Phi(X,Y)=\E \left[ \sum_{x=1}^X \sum_{y=1}^Y u(x,y) \mathcal I_{\mathrm {between}}(x,y) \right].
  \end{equation}
  where we use the definition \eqref{eq_signed_indicator}.
   In words, $\Phi(X,Y)$ is the expected signed sum of all the inhomogeneities of \eqref{eq_discrete_PDE} between the paths.
\end{theorem}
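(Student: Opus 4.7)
My plan is to show that the candidate $\Psi(X, Y) := \E\!\left[\sum_{x=1}^X \sum_{y=1}^Y u(x,y)\,\mathcal I_{\mathrm{between}}(x,y)\right]$ on the right of \eqref{eq_solution_as_expectation_noise} satisfies the discrete telegraph equation \eqref{eq_discrete_PDE} with zero boundary data, and then to invoke the uniqueness provided by Proposition \ref{Proposition_discrete_PDE_unique}. The vanishing of $\Psi$ at $X = 0$ or $Y = 0$ is immediate since the double sum is empty. By linearity in $u$, verification of the PDE reduces to the single-source case $u = \delta_{(x_0, y_0)}$, for which Theorem \ref{Theorem_discrete_PDE_through_integrals} identifies the unique solution with $\Rd(X, Y; x_0, y_0)\, \mathbf 1_{X \geq x_0,\, Y \geq y_0}$. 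It therefore suffices to prove
\[
G(X, Y) := \E\bigl[\mathcal I_{\mathrm{between}}(x_0, y_0)\bigr] \;=\; \Rd(X, Y; x_0, y_0), \qquad X \geq x_0,\ Y \geq y_0,
\]
where the expectation implicitly depends on $(X, Y)$ through the starting positions of the two paths.

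Since the two reversed walks are independent, \eqref{eq_signed_indicator} gives $G = p_- + p_{|} - 1$, with $p_-$ and $p_{|}$ the marginal probabilities of $(x_0, y_0)$ being weakly below $\mathcal T_-^{(X+1, Y)}$ and weakly to the left of $\mathcal T_{|}^{(X, Y+1)}$. Noting from \eqref{eq_Discrete_R} that $\Rd(X, Y; x_0, y_0)$ depends only on $X - x_0$ and $Y - y_0$, and invoking the four local properties established in the proof of Theorem \ref{Theorem_discrete_PDE_through_integrals}, the function $\Rd$ is the unique solution on $\{X \geq x_0,\ Y \geq y_0\}$ of the homogeneous 4-point discrete telegraph equation with boundary values $\Rd(X, y_0; x_0, y_0) = b_1^{X - x_0}$ and $\Rd(x_0, Y; x_0, y_0) = b_2^{Y - y_0}$. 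I will match $G$ to $\Rd$ via these conditions. The corner value $G(x_0, y_0) = 1$ is clear, since both paths reach $(x_0, y_0)$ on their first deterministic step, giving $p_- = p_{|} = 1$. The boundary values follow from a direct analysis: on $Y = y_0$, the vertical path $\mathcal T_{|}^{(X, y_0+1)}$ forces $p_{|}(X, y_0) = 1$ because it is rooted at $x = X \geq x_0$, while $\mathcal T_-^{(X+1, y_0)}$ starts at height $y_0$ and keeps $(x_0, y_0)$ weakly below it exactly when it continues leftward at each of the $X - x_0$ intermediate vertices (each with reversed-walk continuation probability $b_1$), so $p_-(X, y_0) = b_1^{X - x_0}$ and $G(X, y_0) = b_1^{X - x_0}$; the case $X = x_0$ is symmetric.

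The principal step is verifying the homogeneous 4-point discrete telegraph equation for $G$ in the interior $\{X > x_0,\ Y > y_0\}$; combined with the boundary data verified above, uniqueness then forces $G \equiv \Rd$. My approach is a Markov first-step decomposition. Conditioning on the first decision of $\mathcal T_-^{(X+2, Y+1)}$ at the H-in vertex $(X+1, Y+1)$, one obtains $p_-(X+1, Y+1) = b_1\, p_-(X, Y+1) + (1-b_1)\, \pi^{V}(X+1, Y)$, where the coupling uses that extra edges at $y = Y+1$ lie outside the relevant window near $(x_0, y_0)$ whenever $x_0 < X+1$, so do not alter the ``weakly below'' indicator, and $\pi^{V}(A, B)$ denotes the probability for a V-in walk making its first decision at $(A, B)$. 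A second first-step decomposition of $\pi^{V}(X+1, Y)$ at the V-in vertex $(X+1, Y)$ gives $\pi^{V}(X+1, Y) = b_2\, \pi^{V}(X+1, Y-1) + (1-b_2)\, p_-(X, Y)$, where the second term uses that the H-in walk started at $(X, Y)$ is trajectory-equivalent (for the $(x_0, y_0)$ indicator with $x_0 < X$) to $\mathcal T_-^{(X+1, Y)}$. Substituting the second decomposition into the first yields exactly the homogeneous four-point equation for $p_-$, and symmetry gives the same for $p_{|}$; adding and using that constants are solutions gives the equation for $G$. The main obstacle is the careful bookkeeping of the auxiliary V-in / H-in walks and the verification that the prepended edges in each coupling are geometrically irrelevant for the indicator, which requires only that $(x_0, y_0)$ lie strictly below and to the left of each branching vertex -- a condition guaranteed throughout the interior.
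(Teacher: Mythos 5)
Your proposal is correct but takes a genuinely different route from the paper.

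The paper computes $\E[\mathcal I_{\mathrm{between}}(x_0,y_0)+1]$ explicitly: it writes this expectation as a sum of exit-point probabilities $P_{-,-}$ and $P_{|,|}$ over where each path crosses the lines $x=x_0$ and $y=y_0$, substitutes the closed-form expressions for these probabilities in terms of $\Rd$ obtained in the proof of Theorem~\ref{Theorem_discrete_as_expectation_boundary} (see \eqref{eq_x44} and \eqref{eq_x49}), and then telescopes the resulting sum via a summed version of the four-point identity for $\Rd$, namely \eqref{eq_x50}. Your proof avoids this explicit bookkeeping: you show directly that $G(X,Y)=\E[\mathcal I_{\mathrm{between}}(x_0,y_0)]$ satisfies the homogeneous four-point relation via a Markov first-step decomposition of the two reversed walks (eliminating the auxiliary V-in probability $\pi^V$ from a coupled pair of recursions), check that $G$ has the same boundary values $b_1^{X-x_0}$ and $b_2^{Y-y_0}$ as $\Rd$ on the rays $Y=y_0$ and $X=x_0$, and invoke uniqueness. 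Your route is self-contained and does not require the explicit formulas $P_{-,-}$, $P_{|,|}$ from the earlier proof, at the cost of the more delicate trajectory-coupling argument needed to justify that the prepended edges do not change the indicator.

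One small imprecision: you state that the four-point relation is verified ``in the interior $\{X>x_0,\,Y>y_0\}$,'' but to pin down $G$ on $\{X\ge x_0,\,Y\ge y_0\}$ from boundary data on $\{X=x_0\}\cup\{Y=y_0\}$ one needs the relation centered at every $(X,Y)$ with $X\ge x_0$ and $Y\ge y_0$ (for instance, determining $G(x_0+1,y_0+1)$ requires the relation at $(x_0,y_0)$). Fortunately your own irrelevance criterion --- that $(x_0,y_0)$ lie strictly below and to the left of each branching vertex --- is satisfied precisely on this larger set, so the argument goes through; only the stated domain is off by one.
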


 By linearity of the equation, the solution to \eqref{eq_discrete_PDE} when both $u$ and $\chi$, $\psi$ are non-vanishing  is the sum of the right--hand sides in \eqref{eq_solution_as_expectation}, \eqref{eq_solution_as_expectation_noise}.

\begin{corollary}
\label{Corollary_solution_expectation}
 In the notations of Theorem \ref{Theorem_discrete_as_expectation_boundary}, \ref{Theorem_discrete_as_expectation_noise} consider the case when both $u(x,y)$ and $\chi$, $\psi$ are non-vanishing. Then
  \begin{equation}
  \label{eq_solution_as_expectation_discrete_sum}
   \Phi(X,Y)=\E \left[\psi(\mathbf y) \right]+ \E \left[\chi(\mathbf x) \right]+\E \left[ \sum_{x=1}^X \sum_{y=1}^Y u(x,y) \mathcal I_{\mathrm {between}}(x,y) \right].
  \end{equation}
\end{corollary}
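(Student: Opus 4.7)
The plan is to exploit the linearity of the discrete equation \eqref{eq_discrete_PDE} together with the uniqueness afforded by Proposition \ref{Proposition_discrete_PDE_unique}. Observe that the left-hand side of \eqref{eq_discrete_PDE} is a linear difference operator in $\Phi$, and the boundary data $\chi,\psi$ enter linearly into \eqref{eq_discrete_PDE_boundary}. So, given arbitrary data $(u,\chi,\psi)$, I would split the problem as
\begin{equation*}
\Phi = \Phi_1 + \Phi_2,
\end{equation*}
where $\Phi_1$ solves \eqref{eq_discrete_PDE} with right-hand side $u\equiv 0$ and boundary values $\chi,\psi$, while $\Phi_2$ solves \eqref{eq_discrete_PDE} with the given right-hand side $u$ and zero boundary values $\chi\equiv\psi\equiv 0$. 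Both auxiliary problems satisfy the compatibility $\chi(0)=\psi(0)$ (trivially in the second case, and by hypothesis in the first), so Proposition \ref{Proposition_discrete_PDE_unique} applies to each.

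Next I would invoke Theorem \ref{Theorem_discrete_as_expectation_boundary}, which represents $\Phi_1(X,Y)$ as $\E[\psi(\mathbf y)] + \E[\chi(\mathbf x)]$, and Theorem \ref{Theorem_discrete_as_expectation_noise}, which represents $\Phi_2(X,Y)$ as the expected signed sum of $u(x,y)\mathcal I_{\mathrm{between}}(x,y)$ using the two reversed paths $\mathcal T_-,\mathcal T_|$ from $(X+1,Y)$ and $(X,Y+1)$. Adding the two expressions yields precisely \eqref{eq_solution_as_expectation_discrete_sum}. Finally, by the uniqueness statement in Proposition \ref{Proposition_discrete_PDE_unique}, $\Phi_1+\Phi_2$ must coincide with the unique solution $\Phi$ to the full problem, completing the proof.

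There is essentially no obstacle here: once the two special cases have been established in Theorems \ref{Theorem_discrete_as_expectation_boundary} and \ref{Theorem_discrete_as_expectation_noise}, the corollary is merely the assembly of their conclusions via linearity. The only minor point to check is that the two independent path couplings used in the two theorems can be taken jointly with the same distribution: since both auxiliary expectations depend only on the marginal laws of the reversed paths from $(X+1,Y)$ and from $(X,Y+1)$, one can freely use a single joint law (e.g., the independent coupling) without affecting either $\E[\psi(\mathbf y)]+\E[\chi(\mathbf x)]$ or $\E[\sum u\cdot \mathcal I_{\mathrm{between}}]$.
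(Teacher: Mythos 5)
Your proof is correct and matches the paper's reasoning: the corollary is established by superposition — split the data into $(u,0,0)$ and $(0,\chi,\psi)$, apply Theorems \ref{Theorem_discrete_as_expectation_boundary} and \ref{Theorem_discrete_as_expectation_noise} to the two pieces, and invoke uniqueness from Proposition \ref{Proposition_discrete_PDE_unique} to identify the sum with $\Phi$. The paper states this linearity argument in a single sentence preceding the corollary; your write-up is the same argument spelled out, with the additional (correct) observation that a common joint law for the two reversed paths can be used since the boundary terms depend only on marginals.
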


\begin{proof}[Proof of Theorem \ref{Theorem_discrete_as_expectation_boundary}]
 By linearity, it suffices to consider the case
 \begin{equation}
 \label{eq_x46}
  \chi\equiv 0,  \qquad \psi(y)=\begin{cases} 1,& y=y_0,\\
 0,&\text{otherwise}. \end{cases}
 \end{equation}
In this case the right--hand side of \eqref{eq_solution_as_expectation} becomes the probability of intersecting the line $x=1/2$ at point $(1/2,y_0)$. Let us compute this probability.

\smallskip

We start by considering a particular case of the stochastic six--vertex model (with the weights of Figure \ref{Fig_weights} at $\alpha=0$) when we have only one
path. In this case the expectation of the height function has a simple probabilistic meaning:
\begin{align}
\label{eq_height_one_path}
 \E&\left[ \frac{1-q^{H(x+1,y)}}{1-q} \right]\\&\notag =
 {\rm Prob}\bigl(\text{ the path passes to the right from }(x+1/2,y+1/2)\, \bigr)
\\\notag &= {\rm Prob}\bigl(\text{ the path passes below }(x+1/2,y+1/2)\, \bigr).
\end{align}
In this formula we think about the paths as having integer coordinates, and we introduced shifts by
$1/2$ to avoid ambiguity for the case when the path passes exactly through the point of interest.

Suppose that the path enters the positive quadrant through the point $(1,y_0)$ coming from the left. Then by Theorem
\ref{Theorem_4_point}, \eqref{eq_height_one_path} denoted as $F^-_{y_0}(X,Y)$ (the superscript
$^-$ indicates that the path enters horizontally) solves
\begin{equation}
\label{eq_x41}
 F^-_{y_0}(X+1,Y+1)-b_1 F^-_{y_0}(X,Y+1)-b_2 F^-_{y_0}(X+1,Y)+(b_1+b_2-1) F^-_{y_0}(X,Y)=0,
\end{equation}
with
\begin{equation}
  F^-_{y_0}(X,0)=0,\qquad F^-_{y_0}(0,Y)=\begin{cases} 0,& Y<{y_0},\\ 1, & Y\ge {y_0}. \end{cases}
\end{equation}
 Theorem \ref{Theorem_discrete_PDE_through_integrals} gives a closed formula:
\begin{equation}
\label{eq_x42}
F^-_{y_0}(X,Y)=\Rd(X,Y;0,y_0)+(1-b_2) \sum_{y=y_0+1}^Y \Rd(X,Y;0,y).
\end{equation}
Consider the difference
$$
 P_{-,-}(0,y_0; X,Y):=F^-_{y_0}(X,Y)-F^-_{y_0}(X,Y-1).
$$
Relation \eqref{eq_height_one_path} implies that it computes the probability that the path, which entered the quadrant
horizontally at $(1,y_0)$, ends horizontally at $(X+1/2,Y)$ (i.e., the path \emph{enters into}
$(X+1,Y)$ from the left). Using \eqref{eq_x42} we get
\begin{multline}
\label{eq_x43}
 P_{-,-}(0,y_0; X,Y)=(1-b_2)\sum_{y=y_0+1}^{Y-1} (\Rd(X,Y;0,y)-\Rd(X,Y-1;0,y))  \\ +(1-b_2)\Rd(X,Y;0,Y)+ \Rd(X,Y;0,y_0)-\Rd(X,Y-1;0,y_0).
\end{multline}
Since $\Rd(X,Y;x,y)$ depends only on differences $X-x$, $Y-y$, the sum telescopes and \eqref{eq_x43} simplifies to
\begin{equation}
\label{eq_x45}
 P_{-,-}(0,y_0; X,Y)= \Rd(X,Y;0,y_0)-b_2\Rd(X,Y;0,y_0+1).
\end{equation}

By translation invariance, the same formula holds for the path which starts not by
entering from the left into $(1,y_0)$, but into an arbitrary point $(x_0+1,y_0)$:
\begin{equation}
\label{eq_x44}
 P_{-,-}(x_0,y_0; X,Y)= \Rd(X,Y;x_0,y_0)-b_2\Rd(X,Y;x_0,y_0+1).
\end{equation}
Note that this holds for $Y=y_0$ as well, if we agree that ${\Rd(X,y_0;x_0,y_0+1)=0}$.


By symmetry, we can also obtain similar formulas for the case when the path starts by entering  from below into a point $(x_0,y_0+1)$. 
The probability of this path entering into $(X,Y+1)$ from below is
\begin{equation}
\label{eq_x49}
 P_{|,|}(x_0,y_0; X,Y)= \Rd(X,Y;x_0,y_0)-b_1\Rd(X,Y;x_0+1,y_0).
\end{equation}

\bigskip

Let us return to proving \eqref{eq_solution_as_expectation} in the particular case \eqref{eq_x46}. We need to show that
\begin{equation}
\label{eq_x47}
\Phi(X,Y)=P_{-,-}(-X,-Y; 0,-y_0).
\end{equation}
Note that we changed the signs of the coordinates to reflect the fact that the walk in the direction of growing $(x,y)$ with weights of Figure \ref{Fig_weights} differs from the one from Figure \ref{Fig_weights_rev} that we need to use.

The definition of $P_{-,-}$ readily implies that \eqref{eq_x47} satisfies the boundary condition \eqref{eq_discrete_PDE_boundary}, \eqref{eq_x46}. On the other hand, note that since $\Rd(X,Y;x,y)$ depends only on $(X-x)$, $(Y-y)$, the first property in the proof of Theorem \ref{Theorem_discrete_PDE_through_integrals} is equivalent to
\begin{multline}
\label{eq_x48}
 \Rd(X,Y;x-1,y-1)-b_1 \Rd(X,Y;x,y-1)-b_2\Rd(X,Y;x-1,y)\\+(b_1+b_2-1) \Rd(X,Y;x,y)=0.
\end{multline}
Combining \eqref{eq_x45} with \eqref{eq_x48}, we conclude that \eqref{eq_x47} satisfies \eqref{eq_discrete_PDE}.
\end{proof}

\begin{proof}[Proof of Theorem \ref{Theorem_discrete_as_expectation_noise}]
By linearity, it suffices to prove \eqref{eq_solution_as_expectation_noise} for the case when $u(x,y)$ is nonzero only at one point, where it equals $1$. In this case, by Theorem \ref{Theorem_discrete_PDE_through_integrals} the solution is
$$
 \Phi(X,Y)=\mathbf 1_{X\ge x_0} \mathbf 1_{Y\ge y_0} \Rd(X,Y;x_0,y_0).
$$

When either $X<x_0$ or $Y<y_0$, matching with \eqref{eq_solution_as_expectation_noise} is immediate, so we will only consider the case $X\ge x_0$, $Y\ge y_0$. Then \eqref{eq_solution_as_expectation_noise} suggests that we need to compute the expectation of $\mathcal I_{\mathrm {between}}(x_0,y_0)$.

Using the notations from the proof of Theorem \ref{Theorem_discrete_as_expectation_boundary} and \eqref{eq_x44}, \eqref{eq_x49}, we have
\begin{multline}
\label{eq_x51}
\E[\mathcal I_{\mathrm {between}}(x_0,y_0)+1]\\ =  \sum_{y=y_0}^Y P_{-,-}(-X,-Y; -x,-y)+\sum_{x=x_0}^X P_{|,|}(-X,-Y; -x,-y)\\=
\sum_{y=y_0}^Y (\Rd(-x_0,-y; -X,-Y) -  b_2 \Rd(-x_0,-y;-X,-Y+1))\\ +
\sum_{x=x_0}^X (\Rd(-x,-y_0; -X,-Y)- b_1 \Rd(-x,-y_0;-X+1,-Y))\\=
\sum_{y=y_0}^Y (\Rd(X,Y; x_0,y) -  b_2 \Rd(X,Y;x_0,y+1))\\ +
\sum_{x=x_0}^X (\Rd(X,Y;x,y_0)- b_1 \Rd(X,Y;x+1,y_0)),
\end{multline}
where we agree that $\Rd(X,Y;X+1,y_0)=\Rd(X,Y; x_0,Y+1)=0$.

On the other hand, let us sum \eqref{eq_x48} over $x=x_0+1\dots,X+1$, $y=y_0+1\dots,Y+1$ except for $(x,y)=(X+1,Y+1)$. Note that the formula \eqref{eq_Discrete_R} for $\Rd$ makes sense even when $x>X$, and moreover it vanishes identically. This implies that \eqref{eq_x48} still holds for such $x$ (as its proof is just a computation showing identical vanishing of the integrand). Similarly, we can deform the contour in \eqref{eq_Discrete_R}, so that it encloses $-\frac{1}{b_1(1-b_2)}$ instead of $-\frac{1}{b_2(1-b_1)}$. Then the result vanishes for $y>Y$, and therefore, \eqref{eq_x48} holds again. Note however, that we can not take both $x>X$ and $y>Y$ simultaneously, as then the argument no longer works.

We get
\begin{multline}
\label{eq_x50}
 \Rd(X,Y;x_0,y_0) +(1-b_1)\sum_{x=x_0+1}^{X} \Rd(X,Y;x,y_0)+(1-b_2)\sum_{y=y_0+1}^{Y} \Rd(X,Y;x_0,y)\\ -\Rd(X,Y;X,Y)=0.
\end{multline}
Recall that $\Rd(X,Y;X,Y)=1$. Thus, \eqref{eq_x51} turns into
$$
 \E[\mathcal I_{\mathrm {between}}(x_0,y_0)+1] = 1+\Rd(X,Y; x_0,y_0).\qedhere
$$
\end{proof}

\subsection{Solutions as path integrals: continuous case}
\label{Section_paths_continuous}

In this section we develop a continuous analogue of Section \ref{Section_paths_discrete} and
present the Feynman-Kac formula for the solution of the telegraph equation \eqref{eq_general_PDE}.

The basic stochastic object is the \emph{persistent Poisson random walk}. It starts from $(X,Y)\in \mathbb R_{>0}^2$ and moves towards the origin along vertical and horizontal directions. Whenever it moves horizontally, it turns down with intensity $\beta_1>0$. Whenever it moves vertically, it turns to the left with intensity $\beta_2>0$. This process is the limit of the random walks of Section \ref{Section_paths_discrete} with weights of Figure \ref{Fig_weights_rev} in the limit regime \eqref{eq_limit_regime}. There is one choice to be made --- when the path leaves $(X,Y)$ it can start by going horizontally or vertically. We denote the resulting (random) trajectories through $\mathcal T_-$ and $\mathcal T_|$, respectively.

\begin{theorem}
 \label{Theorem_telegraph_as_expectation} Consider the telegraph equation \eqref{eq_general_PDE}, \eqref{eq_inhomogeneous_PDE_boundary}. Assume that $\psi(0)=\chi(0)=0$ and extend these functions to negative arguments as identical zeros.
  The solution $\phi(X,Y)$ admits the following stochastic formula. Consider two (independent) persistent Poisson paths $\mathcal T_-$ and $\mathcal T_|$, leaving $(X,Y)$ horizontally and vertically, respectively. Let $\mathbf y$ be the ordinate of the first intersection of $\mathcal T_-$ with the $y$--axis, and let $\mathbf x$ be the abscissa of the first intersection of $\mathcal T_|$ with the $x$--asix. Further, for any point $(x,y)\in\mathbb R^2_{>0}$, define
  $$
   \mathcal I_{\mathrm {between}}(x,y)=\begin{cases} 1,& (x,y) \text{ is between } \mathcal T_- \text{ and } \mathcal T_| \text{ with } \mathcal T_-\text{ above},\\
   -1, & (x,y) \text{ is between } \mathcal T_- \text{ and } \mathcal T_| \text{ with } \mathcal T_-\text{ below},\\
   0, &\text{ otherwise.}
   \end{cases}
  $$
  Then
  \begin{equation}
  \label{eq_solution_as_expectation_continuous}
  \phi(X,Y)=\E \chi(\mathbf x)+\E \psi(\mathbf y)+\E\left[ \int_0^X\int_0^Y  \mathcal I_{\mathrm {between}}(x,y) u(x,y) dx dy\right].
  \end{equation}
\end{theorem}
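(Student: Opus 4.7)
The plan is to obtain this as a scaling limit of the discrete Corollary \ref{Corollary_solution_expectation}. A direct Feynman-Kac verification by conditioning on the first turn of each walk is also possible, but passing from the already-proven discrete result lets us reuse the Riemann-function bookkeeping of Section \ref{Section_paths_discrete} without re-deriving the PDE identities.

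First I would set up the discretization on the lattice $(1/L)\mathbb{Z}_{\ge 0}^2$ with $L$ large: put $b_1 = e^{-\beta_1/L}$, $b_2 = e^{-\beta_2/L}$, sample the boundary data as $\chi^{(L)}(x) = \chi(x/L)$, $\psi^{(L)}(y) = \psi(y/L)$, and set $u^{(L)}(x,y) = L^{-2} u(x/L, y/L)$. A direct expansion shows that the discrete operator of \eqref{eq_discrete_PDE} applied to $\phi(x/L, y/L)$ equals $L^{-2}(\phi_{XY}+\beta_1\phi_Y+\beta_2\phi_X) + O(L^{-3})$, which matches \eqref{eq_general_PDE} once the right-hand side is scaled by $L^{-2}$, justifying the choice of $u^{(L)}$. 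Corollary \ref{Corollary_solution_expectation} then represents the discrete solution $\Phi^{(L)}$ as
$$
 \Phi^{(L)}(LX, LY) = \E\psi^{(L)}(\mathbf{y}^{(L)}) + \E\chi^{(L)}(\mathbf{x}^{(L)}) + \E\left[\sum_{x=1}^{LX}\sum_{y=1}^{LY} u^{(L)}(x,y)\, \mathcal{I}^{(L)}_{\mathrm{between}}(x,y)\right],
$$
using the pair of reversed discrete walks of Figure \ref{Fig_weights_rev} from $(LX+1, LY)$ and $(LX, LY+1)$.

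The second step is convergence of the underlying stochastic objects. A reversed walk of Figure \ref{Fig_weights_rev} turns at each step with probability $1 - b_1 = \beta_1/L + O(L^{-2})$ (if moving horizontally) or $1 - b_2 = \beta_2/L + O(L^{-2})$ (if moving vertically); after rescaling coordinates by $1/L$, the segment lengths, being geometric with parameter of order $1/L$, converge jointly in distribution to independent exponentials, and the pair of discrete walks converges in the Skorokhod topology to the independent persistent Poisson walks $\mathcal{T}_-$ and $\mathcal{T}_|$. The third step is convergence of the solution: $\Phi^{(L)}(LX, LY) \to \phi(X,Y)$. The cleanest route is via the explicit Riemann-function formulas in Theorems \ref{Theorem_general_solution} and \ref{Theorem_discrete_PDE_through_integrals}: under the contour change $z \mapsto z/L$ one checks from \eqref{eq_Discrete_R} that $L\cdot \Rd(LX,LY;Lx,Ly) \to \R(X,Y;x,y)$ uniformly on compacta in $\{0 \le x < X,\; 0 \le y < Y\}$; the boundary sums in \eqref{eq_discrete_PDE_solution} become the boundary integrals in \eqref{eq_inhom_solution_integral} (after a summation by parts converting the increments $\psi(y)-b_2\psi(y-1)$ into $\psi'+\beta_2\psi$), and the double sum $\sum u^{(L)} \Rd(\cdot\,;x,y)$ is a Riemann sum for the double integral.

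Combining the above, I pass to the limit in each term of the discrete formula. Convergence $\E\psi^{(L)}(\mathbf{y}^{(L)}) \to \E\psi(\mathbf{y})$ and similarly for $\chi$ follow from the Skorokhod convergence of walks, continuity of the boundary data, and bounded convergence (the exit points lie in $[0,Y]$ and $[0,X]$). For the inhomogeneity term, $\mathcal{I}^{(L)}_{\mathrm{between}}$ viewed as a function of $(x/L, y/L)$ converges pointwise to $\mathcal{I}_{\mathrm{between}}$ off the (Lebesgue null) random curves $\mathcal{T}_\pm$, so the rescaled sum converges to $\int_0^X \int_0^Y u(x,y) \mathcal{I}_{\mathrm{between}}(x,y)\,dx\,dy$. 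I expect the main technical obstacle to be interchanging the walk-convergence with the expectation in the inhomogeneity term: the cleanest fix is a Skorokhod representation, putting all walks on a common probability space where the convergence is almost sure and then applying dominated convergence, which is available because $|\mathcal{I}^{(L)}_{\mathrm{between}}| \le 1$ and the walks exit the rectangle $[0,X]\times [0,Y]$ in deterministic time.
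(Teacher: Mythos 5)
Your proposal follows the same route as the paper's own (very brief) proof: pass from Corollary \ref{Corollary_solution_expectation} to the continuum by sending $L\to\infty$ in the limit regime \eqref{eq_limit_regime}, using that the reversed discrete walks of Figure \ref{Fig_weights_rev} converge to persistent Poisson walks and that the discrete Riemann function \eqref{eq_Discrete_R} converges to the continuous one. You fill in details that the paper omits, and the scalings you choose are correct: $u^{(L)}=L^{-2}u$ exactly compensates the $O(L^{-2})$ order of the discrete operator, and the Skorokhod-coupling plus dominated-convergence argument is the right way to justify exchanging the limit with the expectation in the inhomogeneity term (with $|\mathcal I^{(L)}_{\mathrm{between}}|\le 1$ and $u$ bounded on $[0,X]\times[0,Y]$ providing the dominating function).

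One slip should be corrected: the limit relation is $\Rd(LX,LY;Lx,Ly)\to\R(X,Y;x,y)$ with \emph{no} prefactor of $L$. This is forced by the normalization $\Rd(X,Y;X,Y)=\R(X,Y;X,Y)=1$, so a multiplicative factor of $L$ is impossible; moreover your own subsequent conclusions presuppose it, since with an extra $L$ the boundary sums $\sum_{y}\Rd(LX,LY;0,y)\bigl(\psi(y/L)-b_2\psi((y-1)/L)\bigr)$ in \eqref{eq_discrete_PDE_solution} would vanish rather than converge to the boundary integrals of \eqref{eq_inhom_solution_integral}. Also, the substitution that makes the convergence transparent is not $z\mapsto z/L$ but rather $z\mapsto L/z$ (this sends the discrete pole at $z\approx -L/\beta_1$ into a neighbourhood of $-\beta_1$); the resulting contour integral has a different-looking exponent than the paper's formula for $\R$, but the two are equal after a residue computation, which is worth noting so that the ``uniformly on compacta'' claim is actually checked. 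Finally, the passage from $\psi(y)-b_2\psi(y-1)$ to $L^{-1}(\psi'+\beta_2\psi)$ is a Taylor expansion rather than a summation by parts; this is only a terminological point.
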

\begin{proof}
 Consider the limit transition \eqref{eq_limit_regime} with simultaneous rescaling by $L$ of the coordinates $x$ and $y$, boundary conditions $\chi$, $\psi$, the right--hand side $u(x,y)$, and the solutions $\Phi(X,Y)$. Then Corollary \ref{Corollary_solution_expectation} and the straightforward limit relation
 $$
   \lim_{L\to\infty} \Rd(LX,LY;Lx,Ly)=\R(X,Y;x,y),
 $$
 implies that the solution to the difference relation \eqref{eq_discrete_PDE} turns into the solution to the telegraph equation \eqref{eq_general_PDE}. Simultaneously, the same limit transition turns the random walks of Section \ref{Section_paths_discrete} into persistent Poisson random walks.

 We conclude that \eqref{eq_solution_as_expectation_continuous} is the $L\to\infty$ limit of \eqref{eq_solution_as_expectation_discrete_sum}.
\end{proof}

\section{Law of Large Numbers through four point relation}

\label{Section_LLN_4p}

From now on we set $\alpha=0$ and study only the stochastic six-vertex model. Our aim is to extend
Theorem \ref{Theorem_LLN} to arbitrary boundary conditions. Our main technical tool is the
four point relation of Section \ref{Section_four_point}.

\subsection{LLN for general boundary conditions}

\begin{theorem}
\label{Theorem_LLN_general}
 Fix $a,b>0$, take two 1-Lipschitz monotone functions $\chi:[0,a]\to\mathbb R$, $\psi:[0,b]\to \mathbb R$
 such that $\chi(0)=\psi(0)$. Suppose that the boundary condition in the stochastic six-vertex model
 is chosen so that as $L\to\infty$, $\frac1L H(Lx,0)\to
 \chi(x)$ and $\frac{1}{L} H(0,Ly)\to \psi(y)$ uniformly on $x\in[0,a]$, $y\in [0,b]$.

 Define the function $\q^{\h}:[0,a]\times[0,b]\to\mathbb R$ as the solution to the PDE
  \begin{multline}
 \label{eq_limit_shape_PDE}
  \frac{\partial^2}{\partial x \partial y} \bigl(\q^{\h(x,y)} \bigr)+\beta_2\frac{\partial}{\partial x} \left( \q^{\h(x,y)} \right)
  +\beta_1 \frac{\partial}{\partial y} \bigl(\q^{\h(x,y)}\bigr)=0, \\ \q^{\h(x)}=\chi(x),\quad \q^{\h(0,y)}=\psi(y,0).
 \end{multline}

 Then the height function of the stochastic six-vertex model ($\alpha=0$) satisfies the Law of
 Large Numbers in the limit regime \eqref{eq_limit_regime}:
 \begin{equation}
 \label{eq_uniform_convergence}
  \lim_{L\to\infty} \sup_{(x,y)\in[0,a]\times[0,b]} \left|\frac{1}{L} H(Lx,Ly)-\h(x,y)\right|=0, \qquad \text{ in probability.}
 \end{equation}
\end{theorem}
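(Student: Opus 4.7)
The strategy is to view the four-point relation \eqref{eq_4_point_relation} of Theorem \ref{Theorem_4_point} as asserting that $q^{H(x,y)}$ solves the discrete telegraph equation \eqref{eq_discrete_PDE} with parameters $b_1=b$, $b_2=bq$, random inhomogeneity $\xi$, and boundary values $q^{H(x,0)}, q^{H(0,y)}$. Theorem \ref{Theorem_discrete_PDE_through_integrals} then immediately yields the stochastic representation
\[
  q^{H(X,Y)} = D_L(X,Y) + \sum_{x=1}^X\sum_{y=1}^Y \Rd(X,Y;x,y)\,\xi(x,y),
\]
where $D_L$ is the (deterministic-in-the-boundary) linear functional of $\{q^{H(x,0)}\},\{q^{H(0,y)}\}$ appearing in \eqref{eq_discrete_PDE_solution}. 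Taking expectations and using $\E[\xi]=0$ (by total expectation applied to \eqref{eq_4_point_no_correlation}), it follows that $\E[q^{H(X,Y)}]$ itself satisfies the discrete telegraph equation with boundary data $\E[q^{H(x,0)}], \E[q^{H(0,y)}]$.

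\textbf{Convergence of the mean to the PDE solution.} Under the scaling $(X,Y)=(\lfloor Lx\rfloor,\lfloor Ly\rfloor)$ in the regime \eqref{eq_limit_regime}, a direct residue/steepest-descent analysis of the contour integral \eqref{eq_Discrete_R} shows $\Rd(\lfloor LX\rfloor,\lfloor LY\rfloor;\lfloor Lx\rfloor,\lfloor Ly\rfloor)\to\R(X,Y;x,y)$ uniformly on compact sets, with uniform boundedness of $\Rd$. The discrete boundary contribution $q^{H(x,0)}-b\,q^{H(x-1,0)}$ decomposes as $(q^{H(x,0)}-q^{H(x-1,0)})+(1-b)\,q^{H(x-1,0)}$; since $1-b\sim\beta_1/L$ and the hypothesis $L^{-1}H(L\cdot,0)\to\chi$ gives $q^{H(Lx,0)}\to\q^{\chi(x)}$ uniformly (with the analogous statement on the $y$--axis), the resulting Riemann sums in \eqref{eq_discrete_PDE_solution} converge to the integral formula \eqref{eq_inhom_solution_integral} of Theorem \ref{Theorem_general_solution} applied with $u\equiv 0$ and the correct boundary combinations $\chi'(x)+\beta_1\chi(x)$, $\psi'(y)+\beta_2\psi(y)$. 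Hence $\E[q^{H(\lfloor Lx\rfloor,\lfloor Ly\rfloor)}]\to \q^{\h(x,y)}$ uniformly in $(x,y)$, where $\h$ is the unique solution of \eqref{eq_limit_shape_PDE}.

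\textbf{Variance bound and upgrade to uniform convergence.} For the fluctuations, Remark \ref{Remark_noise_uncor} gives pairwise uncorrelation of the $\xi$'s, so (conditionally on the boundary) the variance of $q^{H(X,Y)}$ equals $\sum \Rd(X,Y;x,y)^2\,\E[\xi(x,y)^2\mid\mathrm{bdry}]$. The conditional variance formula \eqref{eq_4_point_covariance}, combined with the deterministic bounds $|\Delta_x|,|\Delta_y|\le|q^{-1}-1|=O(L^{-1})$ (since $H(x+1,y)-H(x,y)\in\{-1,0\}$) and the scalings $1-b,1-bq,1-q=O(L^{-1})$, yields $\E[\xi(x,y)^2]=O(L^{-3})$ uniformly. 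Combined with uniform boundedness of $\Rd$, summation over the $O(L^2)$ lattice points produces $\mathrm{Var}(q^{H(\lfloor Lx\rfloor,\lfloor Ly\rfloor)})=O(L^{-1})$. Chebyshev's inequality together with the uniformly Lipschitz inverse of $h\mapsto(\q^{1/L})^h$ then gives pointwise convergence in probability of $L^{-1}H(Lx,Ly)$ to $\h(x,y)$. The uniform statement \eqref{eq_uniform_convergence} follows by a standard $\eps$--net argument: $L^{-1}H(Lx,Ly)$ is deterministically $1$-Lipschitz in each coordinate, $\h$ is uniformly continuous on the compact rectangle $[0,a]\times[0,b]$, so pointwise convergence in probability on a finite $\eps$-net combined with the common Lipschitz bound forces uniform convergence in probability. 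The main obstacle I anticipate is in the second paragraph: proving the \emph{uniform} convergence $\Rd\to\R$ with sufficient quantitative control, and correctly matching the discrete boundary factors to the derivative combinations $\chi'+\beta_1\chi$, $\psi'+\beta_2\psi$ when only uniform (rather than $C^1$) boundary convergence is assumed---this requires either integrating by parts discretely or reinterpreting the limiting boundary data in the generalized sense of Remark \ref{Remark_by_parts}.
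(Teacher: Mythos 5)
Your proof takes a genuinely different route from the paper's main argument, and it is precisely the one the paper sketches in the remark immediately following its proof of Theorem~\ref{Theorem_LLN_general}. The paper works entirely with the \emph{integrated} four-point relation (Corollary~\ref{corollary_4_point_integrated}): it extracts subsequential limits of $\E q^{H(L\cdot,L\cdot)}$ by Arzel\`a--Ascoli, identifies them as solutions of the integrated telegraph equation~\eqref{eq_integrated_telegraph}, and concludes by the contraction-mapping uniqueness of Proposition~\ref{Proposition_integrated_PDE_solution}; for the fluctuations it subtracts the mean relation from the random one and again uses integrated-equation uniqueness to force the Lipschitz limit $\tilde U$ to vanish. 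You instead invoke the explicit discrete Riemann-function representation of $q^{H}$ (Theorems~\ref{Theorem_4_point} and~\ref{Theorem_discrete_PDE_through_integrals}), pass to the limit $\Rd\to\R$, and handle the fluctuations by Chebyshev plus a Lipschitz/$\eps$-net upgrade. Your route has the virtue of making the explicit structure of the answer visible (and it is the route the paper itself uses in Sections~\ref{Section_CLT_conjecture} and~\ref{Section_Low_density_proof}); the paper's route trades that explicitness for a scheme that never needs to differentiate boundary data, since uniqueness in the integrated form requires only continuity.

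That trade-off is exactly where your proof has an unexecuted step. You correctly note that the summands $q^{H(x,0)}-b_1\,q^{H(x-1,0)}$ involve discrete differences of boundary data that is only assumed 1-Lipschitz, not $C^1$, so the resulting Riemann sums cannot be matched directly to the $\tilde\chi'(x)+\beta_1\tilde\chi(x)$ and $\tilde\psi'(y)+\beta_2\tilde\psi(y)$ integrands of \eqref{eq_inhom_solution_integral} (here $\tilde\chi=\q^{\chi}$, $\tilde\psi=\q^{\psi}$). You propose discrete summation by parts, or the generalized formulation of Remark~\ref{Remark_by_parts}, as the remedy, and either would close the gap: summing by parts shifts the difference operator from the non-smooth boundary data onto $\Rd$, whose rescaled discrete differences converge (uniformly with derivatives) to derivatives of $\R$, after which one uses only the uniform convergence $q^{H(Lx,0)}\to\q^{\chi(x)}$. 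But as written, this step is flagged rather than carried out, so the argument is not quite complete; carrying out the discrete integration by parts (or simply switching to the paper's integrated-equation scheme for this one step) is needed.
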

\begin{remark} Proposition \ref{Proposition_PDE_solution} says that
\eqref{eq_limit_shape_PDE} has a unique solution in the quadrant $x,y\ge 0$ for any continuously
differentiable boundary data on the lines $x=0$, $y=0$. When the boundary data are less regular,
one has to consider the integrated form \eqref{eq_integrated_telegraph} of the equation instead.
Note that $\h(x,0)$ and $\h(0,y)$ must be $1$--Lipschitz by the definition of the height
function.
\end{remark}
\begin{remark}
\label{Remark_back_to_six}
 In terms of the partial derivatives of $\h(x,y)$ and $\q$, $\s$ parameters, the equation \eqref{eq_limit_shape_PDE}
 turns into a \emph{non-linear} PDE
 \begin{equation}
 \label{eq_limit_shape_PDE_2}
  \frac{1}{\ln(\q)} \h_{xy}+\h_x\h_y+\frac{1}{\s-1} \h_x+\frac{\s}{\s-1}\h_y=0.
 \end{equation}
 In terms of $\rho=\h_x$ it gives (writing \eqref{eq_limit_shape_PDE_2} as an expression of $\h_y$ through $\h_x$, $\h_{xy}$ and differentiating  with respect to $x$)
 \begin{equation}
 \label{eq_limit_shape_PDE_3}
  \frac{1}{\ln(\q)}\left( \rho_{xy}+\frac{(1-\s)\rho_x\rho_y}{\s+(\s-1)\rho}\right)
  + \rho_x \cdot\frac{\s}{\s-1} \cdot \frac{1}{\s+(\s-1)\rho}+\rho_y \cdot \frac{1}{\s-1}\cdot (\s+(\s-1)\rho).
 \end{equation}
 As $\q\to 0$, \eqref{eq_limit_shape_PDE_3} becomes the equation for the limit shape of the
 stochastic six-vertex model discussed in \cite{RS}, in agreement with Proposition \ref{Proposition_q_0} above.

 Another limit is $\s\to 1$ with fixed $\q$, which turns \eqref{eq_limit_shape_PDE_2} into
 $\h_x+\h_y=0$. The limit shape $\h$ becomes constant along the lines $x-y=const$.
\end{remark}

\begin{proof}[Proof of Theorem \ref{Theorem_LLN_general}]
 The function $\frac{1}{L}H(Lx,Ly)$ is monotone and $1$--Lipschitz in each of its variables.
 Therefore, by the Arzela--Ascoli theorem, the sequence of functions $\E q^{H(Lx,Ly)}$ has subsequential limits
 (with respect to supremum norm topology on continuous functions in $[0,a]\times[0,b]$) which are also
 Lipschitz. Let $\tilde \h(x,y)$ be one of such limits.
 Taking the expectation of \eqref{eq_4_point_relation_integrated}, we obtain
  \begin{multline}
  \label{eq_4_point_integrated_expectation}
  -(1-b) \sum_{x=1}^{LX-1} \E q^{H(x,0)} - (1-bq) \sum_{y=1}^{LY-1}   \E  q^{H(0,y)}\\ +(1-b) \sum_{x=1}^{LX-1} \E q^{H(x,LY)}
  + (1-bq) \sum_{y=1}^{LY-1}   \E q^{H(LX,y)}
 \\ + (b+bq-1)  \E q^{H(0,0)}- bq \cdot  \E q^{H(LX,0)} - b \cdot  \E q^{H(0,LY)} +  \E q^{H(LX,LY)}
 =0.
\end{multline}
 Sending $L\to\infty$ in \eqref{eq_4_point_integrated_expectation}, we get for all $0\le X\le a$, $0\le Y\le b$
  \begin{multline}
  \label{eq_4_point_integrated_limit}
  -\beta \int_0^X \q^{\tilde \h(x,0)} dx - (\beta-\ln(\q)) \int_0^Y \q^{\tilde \h(0,y)}dy +\beta \int_0^X \q^{\tilde \h(x,Y)}dx
  \\ + (\beta-\ln(\q)) \int_0^Y   \q^{\tilde \h(X,y)}dy
 -  \q^{\tilde \h(0,0)}- \q^{\tilde \h(X,0)} - \q^{\tilde \h(0,Y)} +  \q^{\tilde \h(X,Y)}
 =0.
\end{multline}
By Proposition \ref{Proposition_integrated_PDE_solution}, the integral equation
\eqref{eq_4_point_integrated_limit} has a unique solution. Hence, all limiting points $\tilde \h$
coincide with a unique limit $\h$, and $\q^\h$ solves \eqref{eq_limit_shape_PDE}.

\smallskip

So far we have shown that the expectation $\E q^{H}$ converges to $\q^{\h}$, and next we show that
the fluctuations decay to $0$.

\smallskip

 Set $U(x,y)=q^{H(Lx,Ly)}-\E q^{H(Lx,Ly)}$. Subtracting
 \eqref{eq_4_point_integrated_expectation} from \eqref{eq_4_point_relation_integrated}, we obtain
 \begin{multline}
  \label{eq_4_point_integrated_error}
  U(X,Y)+(1-b) \sum_{x=1}^{LX-1} U(x/L,Y)
  + (1-bq) \sum_{y=1}^{LY-1}   U(X,y/L)
 =\sum_{x=1}^{LX}\sum_{y=1}^{LY} \xi(x,y).
\end{multline}
 We claim that the maximum of right--hand side of \eqref{eq_4_point_integrated_error} over $(X,Y)\in[0,a]\times [0,b]$
 converges to $0$ in probability as $L\to\infty$. Indeed, consider the function
 $$
 V(X,Y)=\sum_{x=1}^{LX}\sum_{y=1}^{LY} \xi(x,y).
$$
 Since $U(X,Y)$, $(X,Y)\in[0,a]\times[0,b]$, is Lipschitz, \eqref{eq_4_point_integrated_error} implies that so
 is $V(X,Y)$. Thus, it suffices to show that for  \emph{some fixed} $X$ and $Y$, $V(X,Y)\to 0$ in
 probability. Using \eqref{eq_4_point_no_correlation}, see Remark \ref{Remark_noise_uncor},  we
 get
 \begin{equation}
  \label{eq_error_variance}
   \E [V(X,Y)]^2= \sum_{x=1}^{LX}\sum_{y=1}^{LY} \E [\xi(x,y)]^2
 \end{equation}
 We further use \eqref{eq_4_point_covariance} to compute each term of the right-hand side. Note
 that $|\Delta_x|<C(1-q)$, $|\Delta_y|<C(1-q)$ for a constant $C>0$ which depends only on $a,b$. It
 follows that as $L\to\infty$, $\E[\xi(x,y)]^2\le \mathrm{const}\cdot L^{-3}$ and \eqref{eq_error_variance} goes to
 $0$ as $\mathrm{const}\cdot L^{-1}$. Thus, $V(X,Y)$ converges to $0$ in probability.

 \smallskip

 The uniformly bounded random functions $U(X,Y)$ are uniformly Lipschitz on $[0,a]\times[0,b]$ as $L\to\infty$.
 Therefore, their laws are tight (in Skorohod topology) as $L\to\infty$, see, e.g., \cite[Corollary 3.7.4]{EK}. Any
 subsequential limit $\tilde U$ has continuous trajectories and must solve the $L=\infty$ version of \eqref{eq_4_point_integrated_error},
 which reads
\begin{multline}
  \label{eq_4_point_integrated_error_limit}
  \tilde U(X,Y)+ \beta_1 \int_0^X \tilde U(x,Y) dx
  + \beta_2 \int_0^Y \tilde U(X,y)dy
 =0, \quad 0\le x,y\le M.
\end{multline}
By Proposition \ref{Proposition_integrated_PDE_solution}, the only solution to
\eqref{eq_4_point_integrated_error_limit} is $\tilde U\equiv 0$. Thus, the law of $U(X,Y)$,
$(X,Y)\in[0,a]\times[0,b]$, converges to the law of the zero function.

We have thus shown that $\sup_{(x,y)\in[0,a]\times [0,b]}| q^{H(Lx,Ly)}-\q^\h(x,y)|\to 0$ in
probability as $L\to\infty$, which implies \eqref{eq_uniform_convergence}.
\end{proof}

\begin{remark} An alternative way to prove Theorem \ref{Theorem_LLN_general} is to use Theorems
\ref{Theorem_4_point} and \ref{Theorem_discrete_PDE_through_integrals} to represent $q^{H}$ through
the  Riemann function. The convergence of the discrete Riemann function to its continuous
counterpart of Theorem \ref{Theorem_general_solution} would then imply the description of the
limiting profile through the telegraph equation.
\end{remark}

\subsection{Consistency check}

\label{Section_consistency}

We would like to directly see that the result of Theorem \ref{Theorem_LLN_general}
complemented with formulas for the solution of Theorem \ref{Theorem_general_solution} matches the
contour integral expression of Theorem \ref{Theorem_LLN} at $\alpha=0$.

Let us find formulas for the solution to \eqref{eq_general_PDE} with specific
boundary condition. We take $u(X,Y)=0$, $\phi(X,0)=\q^{-p_1
X}=\exp(-(\beta_1-\beta_2)p_1 X)$, $\phi(0,Y)=\q^{p_2 Y}=\exp((\beta_1-\beta_2)p_2
Y)$ for two constants $p_1,p_2$. Then the solution is
\begin{multline}
 2\pi \ii \phi(X,Y)=2\pi \ii \R(X,Y;0,0)\\ +2\pi \ii \int_0^Y  \R(X,Y; 0,y)  \bigl(p_2(\beta_1-\beta_2) + \beta_2 \bigr)\exp((\beta_1-\beta_2)p_2 y) dy
\\+2\pi \ii \int_0^X \R(X,Y; x,0) \bigl(-p_1(\beta_1-\beta_2)+\beta_1\bigr)\exp(-(\beta_1-\beta_2)p_1 x) dx
\end{multline}
Plugging in the definition of $\R$ and integrating in $x$ and $y$, this can be transformed to
(with the notation $p_i=\frac{\rho_i}{1+\rho_i}$, so that $\rho_i=\frac{p_i}{1-p_i}$)
\begin{multline}
\label{eq_x16}
 \oint_{-\beta_1} \exp\left[
 (\beta_1-\beta_2) \left(-X \frac{z}{z+ \beta_2} + Y \frac{z}{z+\beta_1}\right)
 \right]
 \frac{ (\beta_2 \rho_1-\beta_1 \rho_2) dz}{(z-\beta_1
\rho_2)(z-\beta_2 \rho_1)}
\\-
  \oint\limits_{-\beta_1}\frac{\rho_1\beta_2+\beta_1}{z-\rho_1\beta_2}
\exp\left[
 (\beta_1-\beta_2)Y \frac{z}{z+\beta_1}
 \right] \left( \exp\left[
 -\frac{\rho_1}{1+\rho_1}(\beta_1-\beta_2) X \right]  \right) \frac{dz}{(z+\beta_1)}.
\end{multline}
Note that the residue at $z=\rho_1 \beta_2$ for both terms in \eqref{eq_x16}
coincides with
$$
 \exp\left[
 (\beta_1-\beta_2) \left(-X \frac{\rho_1}{1+ \rho_1} + Y \frac{\rho_1 \beta_2}{\rho_1 \beta_2+\beta_1}\right)
 \right].
$$
Thus, we can include $\rho_1 \beta_2$ into the integration contours. After that, the
second integral vanishes, and we get the final expression
\begin{equation}
\label{eq_solution_double_Bernoully}
 \oint_{-\beta_1,\, \rho_1 \beta_2} \exp\left[
 (\beta_1-\beta_2) \left(-X \frac{z}{z+ \beta_2} + Y \frac{z}{z+\beta_1}\right)
 \right]
 \frac{ (\beta_2 \rho_1-\beta_1 \rho_2) dz}{(z-\beta_1
\rho_2)(z-\beta_2 \rho_1)}.
\end{equation}

In particular, when $p_1=0$, $p_2=1$ (i.e., $\rho_1=0$, $\rho_2=+\infty$), we return to the domain wall boundary conditions, and the contour integral transforms into
\begin{equation}
\label{eq_solution_step}
 \oint_{-\beta_1,\, 0} \exp\left[
 (\beta_1-\beta_2) \left(-X \frac{z}{z+ \beta_2} + Y \frac{z}{z+\beta_1}\right)
 \right]
 \frac{ dz}{z},
\end{equation}
in agreement with Theorem \ref{Theorem_LLN} (cf.\ Remark \ref{Remark_DW_in_betas}). Note that $0$ is included in the contour, as here we deal with $\q^{\h(x,y)}$, while \eqref{eq_LLN_betas} corresponded to $\q^{\h(x,y)}-1$.

\section{CLT for general boundary conditions}

\label{Section_CLT_conjecture}

We say that a function $f:[a,b]\to \mathbb R$ is piecewise $C^1$--smooth, if it is continuous on the segment $[a,b]$ and there exists a finite partition  $a=x_0<x_1<\dots<x_n=b$ such that $f$ is continuously differentiable on each open interval $(x_{i-1},x_i)$, $1\le i \le n$, and its derivative has left and right limits at each point $x_i$, $1\le i \le n-1$.

The goal of this section is to prove the following statement.

\begin{theorem}\label{Conjecture_general_CLT} In the setting of Theorem \ref{Theorem_LLN_general}, assume additionally that the boundary conditions $\chi(x)$, $\psi(y)$ are piecewise $C^1$--smooth\footnote{We believe that the statement is true for arbitrary monotone and $1$--Lipschitz   $\chi$ and $\psi$. However, without the piecewise-smoothness condition the justification of convergence of the sum \eqref{eq_x58} to the integral
\eqref{eq_SPDE_covariance} needs additional technical efforts.}. Then the fluctuation field
$\sqrt{L}\bigl( q^{H(Lx,Ly)}-\E q^{H(Lx,Ly)} \bigr)$ converges as $L\to\infty$ (in the sense of convergence of finite-dimensional distributions) to a random Gaussian field
$\phi(x,y)$, $x,y\ge 0$, which solves
\begin{multline}
 \label{eq_SDE_betas}
  \phi_{xy}+\beta_1\phi_{y}
  + \beta_2 \phi_x
  \\= \eta  \cdot \sqrt{
  (\beta_1+\beta_2)
 \q^{\h}_x \q^{\h}_y
   +  (\beta_2-\beta_1) \beta_2\, \q^{\h} \q^{\h}_x
   - (\beta_2-\beta_1) \beta_1\, \q^{\h} \q^{\h}_y}
\end{multline}
with zero boundary conditions $\phi(x,0)=\phi(0,y)=0$, where $\eta$ is the two--dimensional white
noise, and $\q^\h$ is the limit shape afforded by Theorem \ref{Theorem_LLN_general}.
\end{theorem}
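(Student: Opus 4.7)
The plan is to derive an integral equation for the rescaled fluctuation field
\[
W_L(X,Y) := \sqrt{L}\bigl(q^{H(LX,LY)} - \E q^{H(LX,LY)}\bigr)
\]
directly from Corollary \ref{corollary_4_point_integrated}, and to pass to the $L\to\infty$ limit in both the integral operator and the noise term. Applying \eqref{eq_4_point_relation_integrated} at the lattice point $(LX,LY)$, subtracting its expectation, and noting that the deterministic boundary data kill the contributions along $x=0$ and $y=0$, one is left with
\[
W_L(X,Y) + (1-b)\sum_{x=1}^{LX-1} W_L\!\left(\tfrac{x}{L},Y\right) + (1-bq)\sum_{y=1}^{LY-1} W_L\!\left(X,\tfrac{y}{L}\right) = \sqrt{L}\, N_L(X,Y),
\]
where $N_L(X,Y) := \sum_{x=1}^{LX}\sum_{y=1}^{LY}\xi(x,y)$. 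Since $(1-b)\sim \beta_1/L$ and $(1-bq)\sim \beta_2/L$, the discrete sums converge formally to Riemann integrals, yielding in the limit
\[
\phi(X,Y) + \beta_1\int_0^X\phi(x,Y)\,dx + \beta_2\int_0^Y\phi(X,y)\,dy = Z(X,Y),
\]
which by Proposition \ref{Proposition_integrated_PDE_solution} uniquely determines $\phi$ in terms of the driving noise $Z:=\lim_L \sqrt{L}\,N_L$.

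The heart of the proof is identifying $Z$ as the Gaussian integral $\int_0^X\!\int_0^Y v(x,y)\,d\eta(x,y)$, with $v^2$ equal to the radicand in \eqref{eq_SDE_betas}. By \eqref{eq_4_point_no_correlation} the $\xi(x,y)$ are martingale increments with respect to the sequential sampling filtration, and \eqref{eq_4_point_covariance} provides explicit conditional variances. The LLN of Theorem \ref{Theorem_LLN_general} gives $q^{H(Lx,Ly)}\to q^{\h(x,y)}$ uniformly, while the expansions
\[
qb(1-b)+b(1-qb)=\tfrac{\beta_1+\beta_2}{L}+O(L^{-2}),\quad b(1-qb)(1-q)=\tfrac{\beta_2(\beta_2-\beta_1)}{L^2}+O(L^{-3}),
\]
and the analogous expansion for $b(1-b)(1-q)$, together with the heuristics $\Delta_x\approx L^{-1}(q^{\h})_x$, $\Delta_y\approx L^{-1}(q^{\h})_y$, give $\E[\xi(x,y)^2\mid\mathcal{F}]\sim L^{-3}v^2(x/L,y/L)$. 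Summing over the $O(L^2)$ lattice sites in $[0,LX]\times[0,LY]$ and multiplying by $L$, the quadratic variation of $\sqrt{L}\,N_L(X,Y)$ converges to $\int_0^X\!\int_0^Y v^2$. A martingale CLT, with the Lindeberg condition trivial since $|\xi(x,y)|=O(L^{-1})$ deterministically, then yields joint Gaussianity of $\sqrt{L}\,N_L$ at finitely many $(X_k,Y_k)$, with covariance matching that of the claimed stochastic integral.

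It remains to pass to the limit in the integral equation and verify tightness. Since the map from the right--hand side to the solution in Proposition \ref{Proposition_integrated_PDE_solution} is a contraction on small rectangles (as in its proof), uniform control of $\sqrt{L}\,N_L$ in the supremum norm transfers to uniform control of $W_L$; a second moment estimate analogous to \eqref{eq_error_variance} applied to $\sqrt{L}\,N_L$ indexed by rectangles, combined with a Kolmogorov--type continuity criterion, supplies tightness. The uniqueness part of Proposition \ref{Proposition_integrated_PDE_solution} then identifies every subsequential limit with the unique solution of the limiting integral equation driven by $Z$. Differentiating recovers \eqref{eq_SDE_betas} with zero boundary values $\phi(x,0)=\phi(0,y)=0$.

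The main obstacle is the quadratic variation step. The LLN controls $q^{H(Lx,Ly)}$ itself uniformly, but not its discrete gradients $\Delta_x$, $\Delta_y$ appearing inside \eqref{eq_4_point_covariance}; direct substitution is illegitimate because these gradients fluctuate on the lattice scale $L^{-1}$. The remedy is a coarse--graining at a mesoscopic scale $L^{-1/2}\ll\ell\ll 1$: sums of $\Delta_x$ over rows telescope to differences of $q^{H}$ over mesoscopic windows, reducing the averaged $\Delta_x$ to quantities directly controlled by Theorem \ref{Theorem_LLN_general}, and similarly for $\Delta_y$. The piecewise $C^1$--smoothness hypothesis on $\chi$, $\psi$ is used precisely here, to guarantee that $(q^\h)_x$ and $(q^\h)_y$ are Riemann integrable and that the coarse--graining error can be absorbed.
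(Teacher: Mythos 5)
Your overall structure is close to the paper's: start from the four-point relation, identify $q^H-\E q^H$ with a martingale-weighted sum of the increments $\xi$, and apply a martingale CLT where the Lindeberg condition is trivial because $|\xi|=O(L^{-1})$ almost surely. (The paper works with the explicit closed form of Theorem \ref{Theorem_discrete_PDE_through_integrals}, giving $q^{H(LX,LY)}-\E q^{H(LX,LY)}=\sum \Rd(LX,LY;x,y)\xi(x,y)$ directly, so that finite-dimensional convergence reduces to variance/covariance limits without needing tightness of a noise field or invertibility of the limiting integral operator; your route via the integrated relation \eqref{eq_4_point_relation_integrated} is equivalent in spirit but must also establish tightness and argue through Proposition \ref{Proposition_integrated_PDE_solution}. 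This is doable but slightly more work.)

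There is, however, a genuine gap in your treatment of the quadratic variation. You correctly flag that the conditional variance \eqref{eq_4_point_covariance} involves the lattice gradients $\Delta_x$, $\Delta_y$ and that these fluctuate at scale $L^{-1}$, and you propose a coarse-graining whereby ``sums of $\Delta_x$ over rows telescope''. This handles the two \emph{linear} terms $q^{H}\Delta_x$ and $q^{H}\Delta_y$ (the paper's \eqref{eq_x54}, \eqref{eq_x55}): after freezing $q^H$ and the Riemann weight over a mesoscopic box, the remaining sum of $\Delta_x$ indeed telescopes. But your argument does not address the \emph{cross term} $\bigl(qb(1-b)+b(1-qb)\bigr)\Delta_x\Delta_y$, which is the same order $L^{-3}$ as the other two and is responsible for the $(\beta_1+\beta_2)\,\q^\h_x\q^\h_y$ piece of the radicand in \eqref{eq_SDE_betas}. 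The product $\Delta_x\Delta_y$ does not telescope, and one cannot replace it pointwise by $L^{-2}\q^\h_x\q^\h_y$: at a typical lattice site the product is zero (it is nonzero only when the vertex at $(x+1,y+1)$ is of type $II$), and at the exceptional sites it is large, of order $L^{-2}$. The paper resolves this by a separate combinatorial argument: over a mesoscopic square the number of type-$II$ vertices equals, up to a controllable error coming from rare corner vertices, the product $n\cdot m$ of the numbers of paths entering through the bottom and left sides of the square (the ``grid'' structure of Figure \ref{Fig_grid}); corners are dominated stochastically by Bernoulli variables with success probability $O(L^{-1})$, which bounds the error. Without this ingredient the identification of the limiting quadratic variation, and hence of the noise variance, is incomplete.
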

\begin{remark}
  The first version of this text stated Theorem \ref{Conjecture_general_CLT} as a conjecture; we also provided two heuristic arguments for it.  The conjecture was proved by Shen and Tsai a few months later, see  \cite{ST}. On the other hand, we later realized that one of our heuristic arguments could be also turned into a complete proof (different from the one in \cite{ST}); it is this proof that we include below. Our other heuristic argument can be found in the appendix.
\end{remark}
\begin{remark}
There are two ways to make sense of the solution to \eqref{eq_SDE_betas}. One can use the
integrated form \eqref{eq_integrated_telegraph} to smooth out the white noise. Alternatively, one
can use the formula for the solution of Theorem \ref{Theorem_general_solution}.
\end{remark}
\begin{remark}
If we denote $\phi(x,y)=\psi(x,y) \q^{\h(x,y)} \ln(\q),$ so that
$$
\psi(x,y)=\lim_{L\to\infty} \frac{H(Lx,Ly)-\E H(Lx,Ly)}{\sqrt{L}},
$$
then \eqref{eq_SDE_betas} is rewritten as
\begin{multline}
 \label{eq_SDE_betas_2}
  \psi_{xy}+\beta_1 \psi_y
  + \beta_2 \psi_x +(\beta_1-\beta_2) ( \psi_y \h_x + \psi_x  \h_y)
  \\= \eta  \cdot  \sqrt{
  (\beta_1+\beta_2)
 \h_x \h_y
   - \beta_2 \, \h_x
   + \beta_1\, \h_y}.
\end{multline}
\end{remark}
\begin{remark} We checked on a
computer the consistency between \eqref{eq_SDE_betas} and Theorem \ref{Theorem_CLT}. Namely, using
Theorem \ref{Theorem_general_solution}, the solution to \eqref{eq_general_PDE} has the covariance
\begin{multline}
\label{eq_x24}
 \mathrm{Cov}(\phi(X_1,Y_1),\phi(X_2,Y_2))\\ =\int\limits_0^{X_1 \wedge X_2}\,
 \int\limits_0^{Y_1\wedge Y_2} \R(X_1,Y_1;x,y) \R(X_2,Y_2; x,y) V^{\infty}(x,y)\, dx dy,
\end{multline}
with $V^{\infty}$ as in the second line of \eqref{eq_SPDE_covariance} below.
Plugging into \eqref{eq_x24} the contour integral expressions for $\R$ and the expressions for $\q^\h$ of Theorem \ref{Theorem_LLN} for
the domain wall boundary conditions we arrive at a $6$--fold integral expression. On the other
hand it has to be equal to the double contour integral of Theorem \ref{Theorem_CLT} (for points on
the same horizontal line, as in that theorem). We actually do not know how to verify it rigorously without using Theorem \ref{Conjecture_general_CLT}, but
evaluation of both expressions using Maple software (using symbolic computations of terms for converging series) shows that they are indeed equal.
\end{remark}

In the rest of this section we prove Theorem \ref{Conjecture_general_CLT}. The idea is to combine Theorems \ref{Theorem_4_point} and \ref{Theorem_discrete_PDE_through_integrals} with Martingale Central Limit theorem to reach the result. We detail only one-point convergence, as convergence of finite-dimensional distributions is proven in the same way by invoking multi-dimensional CLT instead of its one-dimensional counterpart.

\smallskip

 We combine Theorem \ref{Theorem_4_point} with Theorem \ref{Theorem_discrete_PDE_through_integrals}
 to get
\begin{multline}
\label{eq_x20}
  q^{H(X,Y)}=q^{H(0,0)} \Rd(X,Y;0,0)+
  \sum_{y=1}^Y \Rd(X,Y;0,y) \bigl(q^{H(0,y)}-b_2 q^{H(0,y-1)}\bigr)\\+
  \sum_{x=1}^X \Rd(X,Y;x,0) \bigl(q^{H(x,0)}-b_1 q^{H(x-1,0)}\bigr)
  +\sum_{x=1}^X \sum_{y=1}^Y \Rd(X,Y;x,y) \xi(x,y).
\end{multline}
The first three terms in \eqref{eq_x20} are deterministic, while the expectation of $\xi(x,y)$
vanishes. Therefore, rescaling $(X,Y)\mapsto (LX,LY)$, we get
\begin{equation}
\label{eq_x21}
  q^{H(LX,LY)}-\E q^{H(LX,LY)}=\sum_{x=1}^{LX} \sum_{y=1}^{LY} \Rd(LX,LY;x,y) \xi(x,y).
\end{equation}
 We now compute the $L\to\infty$ limit of the variance of \eqref{eq_x21}. Relation \eqref{eq_4_point_no_correlation} implies that $\xi(x,y)$ is uncorrelated noise; denote its variance by $V(x,y)$. Then
\begin{equation}
\label{eq_x52}
  \E(q^{H(LX,LY)}-\E q^{H(LX,LY)})^2=\E\left[\sum_{x=1}^{LX} \sum_{y=1}^{LY} [\Rd(LX,LY;x,y)]^2 V(x,y)\right].
\end{equation}
$V(x,y)$ is computed through \eqref{eq_4_point_covariance} to be
\begin{multline}
\label{eq_x53}
   V(x,y)=\bigr(qb(1-b)+b(1-qb)\bigl) \Delta_x \Delta_y
   \\ + b(1-qb)(1-q) q^{H(x,y)}  \Delta_x
   - b(1-b)(1-q)  q^{H(x,y)} \Delta_y.
\end{multline}
Choose a small parameter $\theta>0$. We split the summation domain $[1,LX]\times [1,LY]$ in \eqref{eq_x52} into disjoint squares of size $\theta L \times \theta L$ (and possibly smaller rectangles near the boundary of the domain). Take one such square $[LX_0,LX_0+L\theta]\times [LY_0,LY_0+L\theta]$ and consider the part of the sum corresponding to the indices $x$ and $y$ inside it. We first approximate the sum in the right--hand side of \eqref{eq_x52} without expectation and then take the expectation at the last step. Note that $|V(x,y)|<{\rm const}\cdot L^{-3}$, since $1-b$, $1-qb$, $1-q$, $\Delta_x$, and $\Delta_y$ all decay as $L^{-1}$. Therefore, the random variable under expectation in \eqref{eq_x52} multiplied by $L$ is uniformly bounded. Hence, convergence in probability would imply convergence of expectation in \eqref{eq_x52}.

Let us deal with the terms in the second line of \eqref{eq_x53} and concentrate on ${b(1-qb)(1-q) [q^{H(x,y)}  \Delta_x]}$. Since $H(x,y)$ is $1$--Lipschitz in both variables, using Theorem \ref{Theorem_LLN_general}, we get
 $$
 q^{H(x,y)}=q^{H(L X_0,LY_0)}+O(\theta)= q^{\h(X_0,Y_0)}+o(1)+O(\theta),
$$
where the remainder $o(1)$ tends to $0$ in probability as $L\to\infty$  uniformly in $(x,y)\in[1,LX]\times[1,LY]$, and remainder $O(\theta)$ is bounded from above by a deterministic constant tending to zero with speed $\theta$ as $\theta\to 0$. Also
$$
[\Rd(LX,LY;x,y)]^2 =[\R(X,Y;X_0,Y_0)]^2+O(\theta).
$$
Without loss of generality, we may assume that $q<1$. Then $\Delta_x$ is a \emph{positive number}, hence summations of $(o(1)+O(\theta))\cdot \Delta_x$ cause no problems: if real numbers $a_1,a_1,\dots,a_k$ are positive and real numbers $e_1,\dots,e_k$ satisfy $|e_i|<C$, then $|a_1 e_1+a_2 e_2+\dots+a_k e_k|\le C(a_1+\dots+a_k)$. We conclude that
\begin{multline}
 \sum_{\begin{smallmatrix} x\in  [LX_0, LX_0+L\theta]\\ y\in  [LY_0,LY_0+L\theta] \end{smallmatrix}
} [\Rd(LX,LY;x,y)]^2 b(1-qb)(1-q) q^{H(x,y)}  \Delta_x\\ = -L^{-2} \beta_2\ln(\q) [\R(X,Y;X_0,Y_0)]^2 \q^{\h(X_0,Y_0)} \\ \times \left(\sum_{y\in
 [LY_0,LY_0+L\theta]} (q^{H(LX_0+L\theta+1,y)}-q^{H(LX_0,y)})\right)\\ + (o(1)+O(\theta))\cdot L^{-2} \cdot (\theta L) \cdot \sup_{y} (q^{H(LX_0+L\theta+1,y)}-q^{H(LX_0,y)}).
\end{multline}
Applying Theorem \ref{Theorem_LLN_general} again, we get
\begin{multline}
\label{eq_x54}
 -\theta L^{-1} \beta_2\ln(\q) [\R(X,Y;X_0,Y_0)]^2 \q^{\h(X_0,Y_0)} \left( \q^{\h(X_0+\theta,Y_0)}-\q^{\h(X_0,Y_0)}\right)\\ +\theta L^{-1} o(1) + (o(1)+O(\theta))L^{-1}  \theta^2.
\end{multline}
Similarly, the asymptotic behavior of the sum of the terms arising from ${- b(1-b)(1-q)  q^{H(x,y)} \Delta_y}$ in the third line of \eqref{eq_x53} is
\begin{multline}
\label{eq_x55}
 \theta L^{-1} \beta_1\ln(\q) [\R(X,Y;X_0,Y_0)]^2 \q^{\h(X_0,Y_0)} \left( \q^{\h(X_0,Y_0+\theta)}-\q^{\h(X_0,Y_0)}\right)\\+\theta L^{-1} o(1) + (o(1)+O(\theta)) L^{-1}  \theta^2.
\end{multline}
The next step is to deal with the first line of \eqref{eq_x53}, which is more complicated due to the product $\Delta_x \Delta_y$. The key observation here is that the random variable $\Delta_x\Delta_y$ vanishes unless the vertex at $(x+1,y+1)$ has type $II$, as in Figure \ref{Fig_weights}; in the latter case $\Delta_x\Delta_y$ is $q^{2H(x,y)}(1-q)(1-q^{-1})$. Arguing similarly to the previous two cases, we then write
\begin{multline}
\label{eq_x56}
\sum_{ \begin{smallmatrix} x\in  [LX_0, LX_0+L\theta]\\ y\in  [LY_0,LY_0+L\theta] \end{smallmatrix}
} [\Rd(LX,LY;x,y)]^2 \bigr(qb(1-b)+b(1-qb)\bigl) \Delta_x \Delta_y\\ =(o(1)+O(\theta))\cdot L^{-1} \cdot \theta^2-L^{-3}[\R(X,Y;X_0,Y_0)]^2(\beta_1+\beta_1)\ln^2(\q) \q^{2\h(X_0,Y_0)}\\ \times \#\{\text{type }II\text{ vertices in }[LX_0, LX_0+L\theta]\times [LY_0,LY_0+L\theta]\}.
\end{multline}

\begin{figure}[t]
\begin{center}
{\scalebox{0.45}{\includegraphics{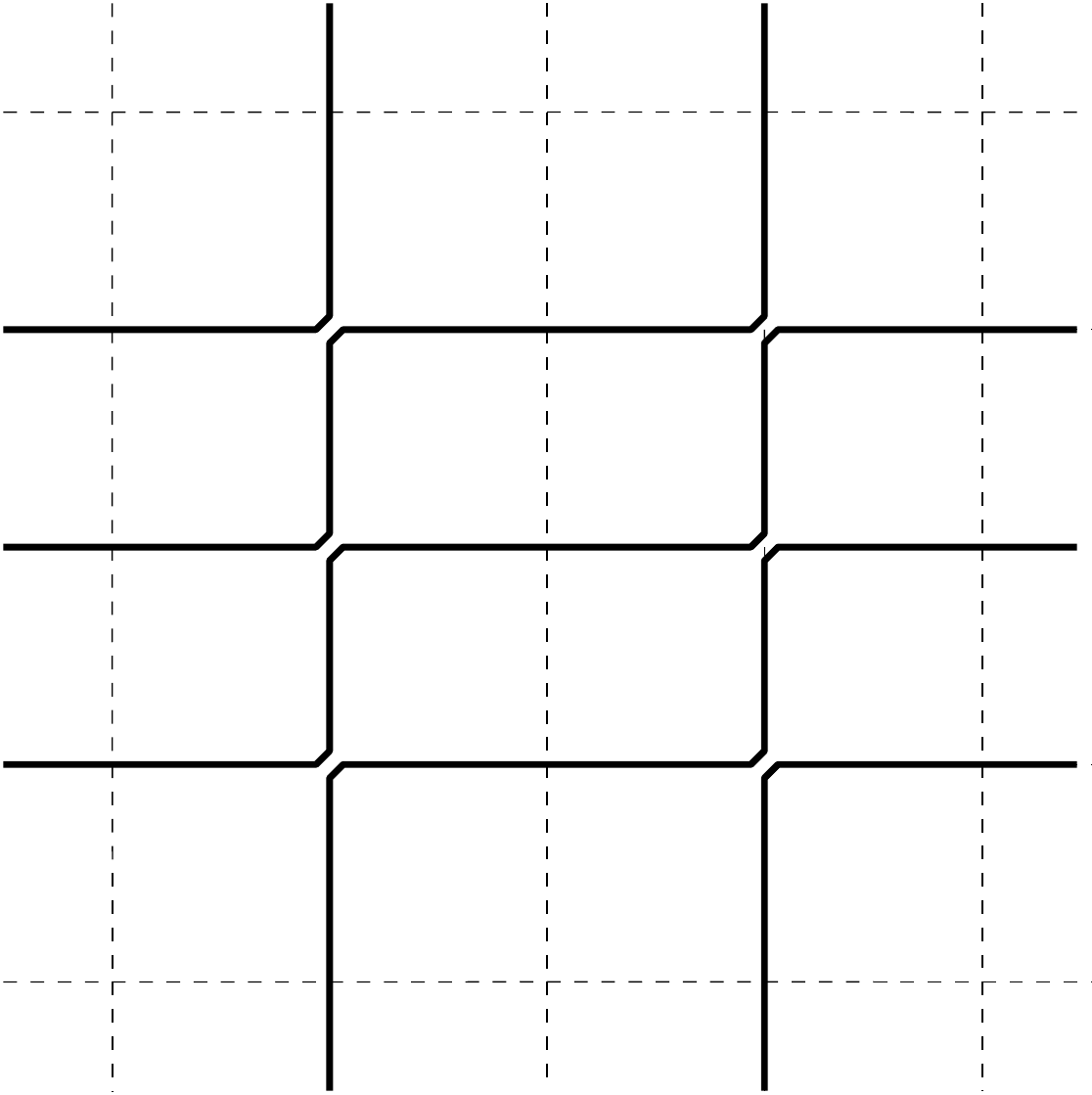}}}

 \caption{ When there are no corner-type vertices (types $V$ and $VI$), the configuration of the six-vertex model looks like a grid with the number of intersections (i.e., type $II$ vertices) equal to the product of the numbers of vertically and horizontally incoming paths. $6=2\times 3$ in the picture.
 \label{Fig_grid} }
\end{center}

\end{figure}
We would like to understand the last line of \eqref{eq_x56}. For that let $\square$ denote the square
$ [LX_0, LX_0+L\theta]\times [LY_0,LY_0+L\theta]$. Suppose that along the
bottom part of $\square$, $n$ paths are entering inside it, and along the left part of $\square$, $m$ paths are entering inside. Further, suppose that there are $\mathcal C$ vertices of types $V$ and $VI$ inside $\square$ --- these vertices represent ``corners''. Note that if $\mathcal C$=0, then the number of type $II$ vertices in $\square$ is $n\cdot m$. Indeed, if we reinterpret the type $II$ vertex as two paths transversally intersecting each other (rather than touching), then each of $n$ paths which entered vertically, must intersect each of the $m$ paths which entered horizontally, cf.\ Figure \ref{Fig_grid}. Let us view the general $\mathcal C>0$ case as a perturbation of $\mathcal C=0$. Then each of $\mathcal C$ corners might change the number of type $II$ vertices at most by $\theta L$, as adding this corner changes the behavior of only one path. The conclusion is that
\begin{equation}
 \left| (\text{Number of type $II$ vertices in }\square)-nm\right|\le \theta L \cdot \mathcal C.
\end{equation}
Let us now find an upper bound for $\mathcal C$. Let $U$ be the sum of $\theta^2 L^2$ i.i.d.\ Bernoulli random variables $\xi_i$ with $\mathrm{Prob}(\xi_i=1)=1-\min(b_1, b_2)$, $\mathrm{Prob}(\xi_i=0)=
\min(b_1, b_2)$. Then the definition of the stochastic six-vertex model implies that $\mathcal C\le U$ in the sense of stochastic dominance. In particular, $\E \mathcal C\le \mathrm{const}\cdot\frac{\theta^2 L^2}{L}$, and $\mathcal C\le \mathrm{const}\cdot \frac{\theta^2 L^2}{L}$ with probability tending to $1$ as $L\to\infty$.

We conclude that
\begin{equation}
 \left| (\text{Number of type $II$ vertices in }\square)-nm\right|\le \mathrm{const} \cdot \theta^3 L^2
\end{equation}
both in expectation and with high probability as $L\to\infty$. Finally,
$$
 n=H(LX_0, L Y_0)- H(LX_0+L\theta, L Y_0)=L(\h(X_0,Y_0)-\h(X_0+\theta,Y_0))+L\cdot o(1),
$$
$$
 m=L(\h(X_0,Y_0+\theta)-\h(X_0,Y_0))+L\cdot o(1),
$$
and  \eqref{eq_x56} turns into
\begin{multline}
\label{eq_x57}o(1)\cdot L^{-1}+ O(\theta^3)\cdot L^{-1} +L^{-1}[\R(X,Y;X_0,Y_0)]^2(\beta_1+\beta_1)\ln^2(\q) \q^{2\h(X_0,Y_0)} \\ \times
 (\h(X_0+\theta,Y_0)-\h(X_0,Y_0))\cdot (\h(X_0,Y_0+\theta)-\h(X_0,Y_0)).
\end{multline}
We now combine the terms from \eqref{eq_x54}, \eqref{eq_x55}, \eqref{eq_x57} and obtain
\begin{multline}
\label{eq_x58}
L \left[\sum_{x=1}^{LX} \sum_{y=1}^{LY} [\Rd(LX,LY;x,y)]^2 V(x,y)\right]\\ =  \sum_{0\le i \le X/\theta}\sum_{0\le j \le Y/\theta} [\R(X,Y; \theta i, \theta j)]^2  \Bigl[
-\theta \beta_2\ln(\q)\q^{\h(\theta i,\theta j)} \\ \times  \left( \q^{\h(\theta(i+1),\theta j)} -\q^{\h(\theta i,\theta j)}\right)+ \theta \beta_1\ln(\q)\q^{\h(\theta i,\theta j)} \left( \q^{\h(\theta i,\theta(j+1))}-\q^{\h(\theta i,\theta j)}\right)
\\ +
 (\beta_1+\beta_1)\ln^2(\q) \q^{2\h(\theta i,\theta j)}
 (\h(\theta(i+1),\theta j)-\h(\theta i,\theta j))\cdot (\h(\theta i,\theta(j+1))-\h(\theta i,\theta j))
 \Bigr]
 \\ + o(1) + O(\theta),
\end{multline}
where $o(1)$ is a random term which (for any fixed $\theta>0$) converges to $0$ in probability as $L\to\infty$, and $O(\theta)$ is a $\theta$-dependent random variable, whose absolute value is almost surely bounded by ${\rm const}\cdot \theta$.

At this point we first send $L\to\infty$ and then $\theta\to 0$. Note that the sum in the right-hand side of \eqref{eq_x58} is deterministic, so there is no randomness involved in the $\theta\to 0$ limit. Recall that $\q^\h$ solves the Telegraph equation \eqref{eq_limit_shape_PDE}. The boundary data $\chi(x)$, $\psi(y)$ are two piecewise $C^1$--smooth functions. Hence, due to integral representation of the solution \eqref{eq_inhom_solution_integral}, $\q^\h$ and therefore also $\h$ inherit smoothness: $\h_x$ is piecewise-continuous in $x$ and continuous in $y$; $\h_y$ is continuous in $x$ and piecewise-continuous in $x$. Hence, all the terms in \eqref{eq_x58} are smooth and as $\theta\to 0$ the sum converges to an integral. We conclude that
\begin{multline}
\label{eq_SPDE_covariance}
 \lim_{L\to\infty} L \left[\sum_{x=1}^{LX} \sum_{y=1}^{LY} [\Rd(LX,LY;x,y)]^2 V(x,y)\right]\\  =\int_0^X \int_0^Y dxdy\, [\R(X,Y; x, y)]^2  \Bigl[
- \beta_2\ln(\q)\q^{\h(x,y)}  \q^{\h(x,y)}_x\\ + \beta_1\ln(\q)\q^{\h(x,y)}  \q^{\h(x,y)}_y
+
 (\beta_1+\beta_1)\ln^2(\q) \q^{2\h(x,y)}
 \h_x(x,y)  \h_y(x,y)
 \Bigr],
\end{multline}
both in probability and in expectation.
Since $\ln(\q)=\beta_1-\beta_2$ and $\q^\h_x=\ln(\q) \q^\h_x$, $\q^\h_y=\ln(\q) \q^\h \h_y$, \eqref{eq_SPDE_covariance} matches the variance of the solution to \eqref{eq_SDE_betas} at point $(X,Y)$ when written in the form of Theorem \ref{Theorem_general_solution}.

\smallskip

 If instead of variance, we compute the $L\to\infty$ limit of the covariance of \eqref{eq_x21} at $(X,Y)=(X_1,Y_1)$ and $(X,Y)=(X_2,Y_2)$, then the argument is very similar. Indeed, since the noise $\xi(x,y)$ is uncorrelated, \eqref{eq_x52} is replaced with
\begin{multline}
\label{eq_x61}
  \E\bigl[(q^{H(LX_1,LY_1)}-\E q^{H(LX_1,LY_1)})(q^{H(LX_2,LY_2)}-\E q^{H(LX_2,LY_2)})\bigr]\\=\E\left[\sum_{x=1}^{L\min(X_1,X_2)} \, \sum_{y=1}^{L\min(Y_1,Y_2)} \Rd(LX_1,LY_1;x,y) \Rd(LX_2,LY_2;x,y) V(x,y)\right].
\end{multline}
Repeating the asymptotic analysis of \eqref{eq_x52}, we arrive at an analogue of \eqref{eq_SPDE_covariance}:
\begin{align*}
\label{eq_SPDE_covariance_multi}
 \lim_{L\to\infty} &L \left[\sum_{x=1}^{L\min(X_1,X_2)} \, \sum_{y=1}^{L\min(Y_1,Y_2)} \Rd(LX_1,LY_1;x,y) \Rd(LX_2,LY_2;x,y) V(x,y)\right]\\  &=\int_0^{\min(X_1,X_2)} \int_0^{\min(Y_1,Y_2)} dxdy\, \R(X_1,Y_1; x, y) \R(X_2,Y_2; x, y) \\ &\times \Bigl[
- \beta_2\ln(\q)\q^{\h(x,x)}  \q^{\h(x,y)}_x + \beta_1\ln(\q)\q^{\h(x,y)}  \q^{\h(x,y)}_y
\\&\quad \quad +
 (\beta_1+\beta_1)\ln^2(\q) \q^{2\h(x,y)}
 \h_x(x,y)  \h_y(x,y)
 \Bigr],
\end{align*}
 which matches the covariance of the solution to \eqref{eq_SDE_betas} at points $(X_1,Y_1)$ and $(X_2,Y_2)$ when written in the form of Theorem \ref{Theorem_general_solution}.

\smallskip

It remains to prove the asymptotic Gaussianity of \eqref{eq_x21}. Let us linearly order the integer points inside the rectangle $[1,LX]\times [1,LY]$ as follows: $(1,1)$, $(2,1)$, $(1,2)$, $(3,1)$, $(2,2)$, $(1,3)$, $(4,1)$, $(3,2)$, $(2,3)$, $(1,4)$,\dots, i.e., we sequentially trace the diagonals $x+y=\mathrm{const}$. Theorem \ref{Theorem_4_point} implies that then
 $\Rd(LX,LY;x,y) \xi(x,y)$ is a martingale difference in $(x,y)$, and we can apply the Martingale Central Limit Theorem, see, e.g., \cite[Section 3]{HH}. There are two conditions to check:
 \begin{enumerate}
  \item The conditional variance, which by Theorem \ref{Theorem_4_point} is given by
    $$
     \sum_{x=1}^{LX} \sum_{y=1}^{LY} [\Rd(LX,LY;x,y)]^2 V(x,y),
     $$
     with $V$ as in \eqref{eq_x53},
should have the same $L\to\infty$ behavior as the unconditional variance \eqref{eq_x52}, in the sense that the ratio tends to $1$ in probability.
 \item The Lindeberg condition should hold, which in our setting reads
 \begin{equation}
 \label{eq_x59}
  \lim_{L\to\infty} \sum_{x=1}^{LX} \sum_{y=1}^{LY} \E \Bigl[L \cdot  \xi^2(x,y)  I_{L\cdot \xi^2(x,y)  >\eps}]=0,\quad \text{ for each } \eps>0.
 \end{equation}
 \end{enumerate}
 The first condition is a reformulation of \eqref{eq_SPDE_covariance}, and therefore, it is already proven.
 For the Lindeberg condition, note that by its definition \eqref{eq_4_point_relation}, $|\xi(x,y)|$ is uniformly bounded by $C/L$ for a deterministic constant $C$.
 Thus, the indicator $I_{ \xi^2(x,y) L >\eps}$ becomes empty as $L\to\infty$, and the expression \eqref{eq_x59} vanishes for large $L$.
 The asymptotic Gaussianity follows, and the proof of Theorem \ref{Conjecture_general_CLT} is complete.

\section{Low density limit}

The Law of Large Numbers of Section \ref{Section_LLN_4p} and the  Central Limit Theorem
of Section \ref{Section_CLT_conjecture} admit a low density degeneration in which the asymptotic
equations become \emph{linear}. The degeneration is explained in this section.

\smallskip

We still work in the asymptotic regime \eqref{eq_limit_regime}, but we change the asymptotic
behavior of the boundary conditions $H(x,0)$ and $H(0,y)$, as compared to Theorems
\ref{Theorem_LLN_general} and \ref{Conjecture_general_CLT}. We introduce a new parameter
$0<\delta<1$ and assume that $H(Lx,0)$ and $H(0,Ly)$ grow proportionally to $L^{1-\delta}$. This
means that there are much fewer paths entering the quadrant from the bottom and from the left. Hence,
the density of lines everywhere in the quadrant would stay low and tend to $0$ as
$L\to\infty$.

\begin{theorem}
\label{Theorem_LLN_CLT_low}
 Fix $a,b>0$, and $0<\delta<1$. Take two continuous monotone functions $\chi:[0,a]\to\mathbb R$, $\psi:[0,b]\to \mathbb R$
 such that $\chi(0)=\psi(0)$. Suppose that the boundary condition in the stochastic six-vertex model
 is chosen so that as $L\to\infty$, $ L^{\delta-1} H(Lx,0)\to
 \chi(x)$ and $L^{\delta-1} H(0,Lx)\to \psi(y)$ uniformly on $(x,y)\in[0,a]\times[0,b]$.

 Define the function $\h:[0,a]\times[0,b]\to\mathbb R$ as the solution to the PDE
  \begin{equation}
 \label{eq_limit_shape_PDE_low}
   \h_{xy}+\beta_2  \h_{x}
  +\beta_1 \h_y=0, \quad x,y\ge 0;\qquad \h(x,0)=\chi(x,0),\quad \h(0,y)=\psi(y,0),
 \end{equation}
 and a random field $\phi:[0,a]\times[0,b]\to\mathbb R$ as a solution to
\begin{equation}
 \label{eq_SDE_low}
  \phi_{xy}+\beta_1\phi_{y}
  + \beta_2 \phi_x
  = \eta  \cdot \sqrt{
    \beta_1\, \h_y-\beta_2\,\h_x
   }
\end{equation}
with zero boundary conditions $\phi(x,0)=\phi(0,y)=0$, where $\eta$ is the two--dimensional white
noise.
 Then the height function $H(x,y)$ of the stochastic six-vertex model ($\alpha=0$) satisfies (for $(x,y)\in[0,a]\times [0,b]$)
 \begin{equation}
 \label{eq_expectation_convergence_low}
  \lim_{L\to\infty} \E  \frac{H(Lx,Ly)}{L^{1-\delta}}=\h(x,y),
 \end{equation}
 \begin{equation}
   \label{eq_fluctuation_convergence_low}
   \lim_{L\to\infty} \frac{H(Lx,Ly)-\E H(Lx,Ly)}{\sqrt{L^{1-\delta}}}=\phi(x,y).
 \end{equation}
\end{theorem}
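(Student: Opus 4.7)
The plan is to adapt the four-point-relation analysis of Sections \ref{Section_LLN_4p} and \ref{Section_CLT_conjecture} to the low density scaling, exploiting the fact that the nonlinear theory for $q^H$ linearizes in $H$ itself when $H = o(L)$. Concretely, since $\ln q = (\beta_2-\beta_1)/L$ and $H(Lx,Ly) \sim L^{1-\delta}$ with $\delta > 0$, the expansion
\begin{equation*}
q^{H(Lx,Ly)} = 1 - (\beta_1-\beta_2) L^{-\delta}\cdot \bigl(L^{\delta-1} H(Lx,Ly)\bigr) + O(L^{-2\delta})
\end{equation*}
turns the nonlinear telegraph equation for $\q^{\h}$ of Theorem \ref{Theorem_LLN_general} into the linear equation \eqref{eq_limit_shape_PDE_low} for $\h$, and the quadratic noise coefficient of Theorem \ref{Conjecture_general_CLT} collapses to its linear piece $\beta_1 \h_y - \beta_2 \h_x$, because the $\h_x\h_y$ contribution stems from the $\Delta_x\Delta_y$ term in $V(x,y)$ and becomes subleading once type $II$ vertices are rare.

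For the LLN \eqref{eq_expectation_convergence_low}, I would take expectation of the integrated four point relation from Corollary \ref{corollary_4_point_integrated}, rescale $(X,Y) \mapsto (LX,LY)$, and introduce $G_L(x,y) := L^{\delta}\bigl(1 - \E\, q^{H(Lx,Ly)}\bigr)/(\beta_1-\beta_2)$. Using $L^{\delta}(1-b)/\beta_1 \to 1$, $L^{\delta}(1-bq)/\beta_2 \to 1$ and the boundary hypotheses combined with the expansion of $q^H$ along $x=0$ and $y=0$, the rescaled averaged relation converges to the integrated form \eqref{eq_integrated_telegraph} of \eqref{eq_limit_shape_PDE_low} with boundary data $\chi$ and $\psi$. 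Uniqueness in Proposition \ref{Proposition_integrated_PDE_solution} identifies any subsequential limit of $G_L$ with $\h$, and transferring back through the linearization yields $\E[L^{\delta-1} H(Lx,Ly)] \to \h(x,y)$, provided the Taylor remainder is controlled by a uniform bound $\E[H(Lx,Ly)^2] = O(L^{2(1-\delta)})$; such a bound is a byproduct of the variance computation in the CLT step below.

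For the CLT \eqref{eq_fluctuation_convergence_low}, I would apply Theorem \ref{Theorem_discrete_PDE_through_integrals} to the centered field $q^{H(X,Y)} - \E\, q^{H(X,Y)}$, which by Theorem \ref{Theorem_4_point} satisfies \eqref{eq_discrete_PDE} with right-hand side $\xi(X,Y)$ and zero boundary data, yielding
\begin{equation*}
q^{H(LX,LY)} - \E\, q^{H(LX,LY)} = \sum_{x=1}^{LX}\sum_{y=1}^{LY} \Rd(LX,LY;x,y)\,\xi(x,y).
\end{equation*}
The decisive step is the asymptotics of $V(x,y) = \E\, \xi^2(x,y)$ from \eqref{eq_4_point_covariance}. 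The $\Delta_x\Delta_y$ term vanishes unless $(x+1,y+1)$ is a type $II$ vertex; the grid-counting argument of Section \ref{Section_CLT_conjecture} combined with the low density gives only $O(\theta^2 L^{2-2\delta})$ such vertices in a square of side $L\theta$, and hence $\E[\Delta_x\Delta_y] = O(L^{-2-2\delta})$, making the first line of \eqref{eq_4_point_covariance} subdominant. The remaining two terms are linear in $\Delta$, and the LLN combined with discrete differencing of $\E\, q^H$ yields $\E\Delta_x \approx -(\beta_1-\beta_2) L^{-1-\delta}\,\h_x$ and $\E\Delta_y \approx -(\beta_1-\beta_2) L^{-1-\delta}\,\h_y$, so that
\begin{equation*}
L^{3+\delta}\,\E V(x,y) \longrightarrow (\beta_1-\beta_2)^2 \bigl(\beta_1\,\h_y(x/L,y/L) - \beta_2\,\h_x(x/L,y/L)\bigr).
\end{equation*}
A Riemann-sum passage together with $\Rd(LX,LY;x,y) \to \R(X,Y;x/L,y/L)$ identifies the limit variance with that of the solution to \eqref{eq_SDE_low} expressed through Theorem \ref{Theorem_general_solution}, after rescaling by $L/(\beta_1-\beta_2)$ to recover $H - \E H$. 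Asymptotic Gaussianity follows from the Martingale CLT applied with the diagonal ordering $x+y = \mathrm{const}$ used in Section \ref{Section_CLT_conjecture}; the Lindeberg condition is immediate from the deterministic bound $|\xi| \le C/L$. Finite-dimensional convergence is obtained by the same analysis applied to covariances, paralleling \eqref{eq_x61}.

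The principal obstacle is justifying the pointwise $C^1$-type statement $\E\Delta_x \approx -(\beta_1-\beta_2) L^{-1-\delta}\,\h_x$, since the LLN by itself only gives pointwise convergence of $G_L$ and not of its discrete derivatives. I would sidestep this by starting from the Riemann-function representation of $G_L$ that one obtains by applying Theorem \ref{Theorem_discrete_PDE_through_integrals} to the averaged four point relation: differentiation (or discrete differencing) under the contour integral in \eqref{eq_Discrete_R} yields $C^1$ convergence of $G_L$ to $\h$ automatically and delivers the needed asymptotic for $\E\Delta_x,\,\E\Delta_y$. A secondary technical issue is a uniform control on the Taylor remainder $O(L^{-2\delta})$ in the linearization of $q^H$; this requires the uniform second moment bound on $H$ mentioned above, which is fed back into the LLN argument to close the loop.
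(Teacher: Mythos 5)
Your proposal follows essentially the same route as the paper's Appendix~A argument: represent $q^{H}-1$ through the discrete Riemann function via the four point relation, linearize $q^H - 1 \approx (\ln q)\,H$ using $H = O(L^{1-\delta})$, identify the $\Delta_x\Delta_y$ contribution in \eqref{eq_4_point_covariance} as subdominant by counting type~$II$ vertices, and close with the Martingale CLT on the diagonal ordering. The leading-order variance $L^{-1-\delta}(\beta_1-\beta_2)^2\int\int \R^2(\beta_1\h_y - \beta_2\h_x)$ that you arrive at agrees with the paper's \eqref{eq_x34}.

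The one place where your route genuinely diverges from the paper's is the extraction of $\E\Delta_x,\E\Delta_y$ asymptotics. You correctly identify that the naive step $\E\Delta_x \approx (\ln q)\,L^{-\delta}\,\h_x$ requires $C^1$-convergence of $\E q^{H(Lx,Ly)}$, and you propose to obtain it by differencing the contour-integral representation of $\E q^H$. The paper instead sums by parts in the variance sum \eqref{eq_x30}--\eqref{eq_x14}, transferring the discrete derivative onto the explicit and smooth $\Rd^2$ and then passing to the limit using only uniform convergence of $\E H$; derivatives of $\h$ appear only after a final integration by parts, which is legitimate since $\chi,\psi$ are monotone (hence $BV$) and $\R$ is smooth. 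The paper's route is more robust for the given regularity hypotheses: your alternative works in the interior but needs some care near the boundary, because the Riemann-function representation has a term $\Rd(X,Y;X,0)\,(q^{H(X,0)}-1)$ whose $X$-difference puts an increment directly on $\chi$, which is assumed only continuous and monotone (not $C^1$). The summation-by-parts route sidesteps this. A smaller stylistic difference: you bound $\E[\Delta_x\Delta_y]$ pointwise and sum, whereas the paper bounds the total count of type~$II$ vertices in $[1,LX]\times[1,LY]$ globally (via columns/rows containing a corner or entering path); both give the same $O(L^{-1-2\delta})$ total, but the global count is slightly easier to justify rigorously.
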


%

Let us present an interpretation of Theorem \ref{Theorem_LLN_CLT_low}. Consider an
$L^{1-\delta}\times L^{1-\delta}$ box inside $[1,LX]\times [1,LY]$. The height function $H(x,y)$ changes by a constant
when we cross the box and, therefore, there are finitely many paths inside. Each path has rare
turns and, as $L\to\infty$, it turns into a \emph{persistent Poisson random walk}:

\noindent$\bullet$ Whenever a path travels to the right, it turns upwards with intensity $\beta_1$,
$\bullet$ whenever a path travels upwards, it turns to the right with intensity $\beta_2$.

Recall that the paths were interacting with each other through the non--intersecting condition. Let
us now change the way we view the vertices of type $V$ of Figure \ref{Fig_weights}: instead of thinking that
paths touch each other, let us imagine that we observe an \emph{intersection} of vertical
and horizontal paths. Now paths simply do not feel each other; the only
interaction is that whenever paths intersect, they cannot turn at exactly the same moment.
However, since intersections are rare, this interaction is negligible as $L\to\infty$. We conclude
that in an $L^{1-\delta}\times L^{1-\delta}$ box the configuration as $L\to\infty$ is probabilistically
indistinguishable from a collection of \emph{independent} persistent Poisson random walks. Gluing
together all $L^{1-\delta}\times L^{1-\delta}$ boxes, we conclude that the entire configuration in
$[1,LX]\times [1,LY]$ looks like that.

 Thus, Theorem \ref{Theorem_LLN_CLT_low} can be
treated as the Law of Large Numbers and Central Limit Theorem for the height function of a
collection of independent persistent Poisson random walks with prescribed densities of entry points on the boundary of the quadrant. We find it somewhat surprising that the
stochastic PDE \eqref{eq_SDE_low} appears in such a simple setup. It should be possible to prove
this Poisson result directly without appealing to the discretization provided by the six-vertex
model, but we leave this question out of the scope of the article.

The proof of Theorem \ref{Theorem_LLN_CLT_low} is similar to those of Theorems \ref{Theorem_LLN_general}, \ref{Conjecture_general_CLT}, the details are presented  in the appendix.

\section{Appendix A: Proof of Theorem \ref{Theorem_LLN_CLT_low}}
\label{Section_Low_density_proof}

Theorem \ref{Theorem_4_point} written in terms of $q^H-1$ and combined with Theorem \ref{Theorem_discrete_PDE_through_integrals} implies that
\begin{multline}
\label{eq_x25}
  q^{H(X,Y)}-1=
  \sum_{y=1}^Y \Rd(X,Y;0,y) \bigl[ (q^{H(0,y)}-1)-b_2 (q^{H(0,y-1)}-1)\bigr]\\+
  \sum_{x=1}^X \Rd(X,Y;x,0) \bigl[ (q^{H(x,0)}-1)-b_1 (q^{H(x-1,0)}-1)\bigr]
  \\ +\sum_{x=1}^X \sum_{y=1}^Y \Rd(X,Y;x,y) \xi(x,y).
\end{multline}

The first two terms of the right--hand side of \eqref{eq_x25} are deterministic and give $\E(q^H-1)$, while the third one is responsible for the fluctuations. Resuming \eqref{eq_x25} and using $q^{H(0,0)}=1$, we obtain
\begin{align}
\label{eq_x26}
  \E&[q^{H(X,Y)}-1]\\=&\notag
  \Rd(X,Y;0,Y)(q^{H(0,Y)}-1)\\ &+ \notag\sum_{y=1}^{Y-1} [\Rd(X,Y;0,y)-b_2 \Rd(X,Y;0,y+1)]  (q^{H(0,y)}-1)\\ \notag
    &+  \Rd(X,Y;X,0)(q^{H(X,0)}-1) \\ &+\notag \sum_{x=1}^{X-1} [\Rd(X,Y;x,0)-b_1 \Rd(X,Y;x+1,0)] (q^{H(x,0)}-1).
\end{align}
We now pass to the limit $L\to\infty$ in \eqref{eq_x26}. For that note the deterministic inequality
$$
 |H(x,y)|\le |H(La,0)|+|H(0,Lb)|, \quad 0\le x \le L a, \, 0\le x \le L b,
$$
which implies
\begin{equation}
\label{eq_x28}
 q^{H(x,y)}-1=\ln(q) H(x,y)+ O\bigl( [\ln(q) H(x,y)]^2\bigr)=\ln(q) H(x,y) +O (L^{-2\delta}).
\end{equation}
In addition, with the notation of Section \ref{Section_PDEs},
$$
 \lim_{L\to\infty} \Rd(LX,LY;Lx,Ly)=\R(X,Y;x,y),
$$
\begin{multline*}
  \lim_{L\to\infty} L (\Rd(LX,LY;Lx,Ly)- b_2 \Rd(LX,LY;Lx,Ly+1))\\=\beta_2 \R(X,Y;x,y)- \R_y(X,Y;x,y),
\end{multline*}
\begin{multline*}
  \lim_{L\to\infty} L (\Rd(LX,LY;Lx,Ly)- b_1 \Rd(LX,LY;Lx+1,Ly))\\=\beta_1 \R(X,Y;x,y)- \R_x(X,Y;x,y).
\end{multline*}
We conclude that
\begin{multline}
\label{eq_x27}
  \lim_{L\to\infty} \E \frac{H(LX,LY)}{L^{1-\delta}}\\=   \R(X,Y;0,Y) \h(0,Y)+
  \int_0^Y [\beta_2 \R(X,Y;0,y)- \R_y(X,Y;0,y)] \h(0,y)dy\\
    +  \R(X,Y;X,0)\h(X,0)+ \int_0^X [\beta_1 \R(X,Y;x,0)- \R_x(X,Y;x,0)] \h(x,0)dx.
\end{multline}
When integrated by parts, \eqref{eq_x27} matches the formula of Theorem \ref{Theorem_general_solution} for the solution to \eqref{eq_limit_shape_PDE_low}.

\smallskip

Thus, \eqref{eq_expectation_convergence_low} is proved and we proceed to \eqref{eq_fluctuation_convergence_low}.  Using \eqref{eq_x25} we have
\begin{equation}
\label{eq_x35}
  q^{H(LX,LY)}-\E q^{H(LX,LY)}= \sum_{x=1}^{LX} \sum_{y=1}^{LY} \Rd(LX,LY;x,y) \xi(x,y).
\end{equation}
The remaining proof proceeds in the following two steps: we first show that the finite--dimensional distributions of \eqref{eq_x35} converge to those of the Gaussian process $(\beta_1-\beta_2) \phi(X,Y)$, and then deduce the limit for the centered height function $H(LX,LY)$ as a corollary. In fact, in the first step we will detail only one--point convergence; the convergence of any finite--dimensional distributions is proven in the same way by invoking the multi--dimensional Central Limit Theorem instead of the one--dimensional version (cf.\ the proof of Theorem \ref{Conjecture_general_CLT} above).

\medskip

Let us investigate the variance of the right--hand side of \eqref{eq_x35} as $L\to\infty$. From \eqref{eq_4_point_no_correlation}, \eqref{eq_4_point_covariance} the variance equals
\begin{multline}
\label{eq_x29}
 \sum_{x=1}^{LX} \sum_{y=1}^{LY} \Rd(LX,LY;x,y)^2\\ \times  \E \Biggl[  \bigr(qb(1-b)+b(1-qb)\bigl)
 (q^{H(x,y)}-q^{H(x-1,y)}) (q^{H(x,y)}-q^{H(x,y-1)})
  \\ + b(1-qb)(1-q) q^{H(x,y)} (q^{H(x,y)}-q^{H(x-1,y)})
 \\  - b(1-b)(1-q) q^{H(x,y)} (q^{H(x,y)}-q^{H(x,y-1)}) \Biggr].
 \end{multline}
 We split \eqref{eq_x29} into two parts: the leading contribution and vanishing terms. The former is
 given by the third and fourth lines with $L\to\infty$ approximations $q^H\approx 1$ and $q^{H(x,y)}-q^{H(x-1,y)}\approx \ln(q) (H(x,y)-H(x-1,y))$:
\begin{multline}
\label{eq_x30}
 b(1-qb)(1-q) \ln(q) \sum_{x=1}^{LX} \sum_{y=1}^{LY} \Rd(LX,LY;x,y)^2\, \E \Bigl[    H(x,y)-H(x-1,y)\Bigr]
\\-  b(1-b)(1-q) \ln(q) \sum_{x=1}^{LX} \sum_{y=1}^{LY} \Rd(LX,LY;x,y)^2\, \E \Bigl[
  H(x,y)-H(x,y-1)\Bigr].
 \end{multline}
 We sum by parts in \eqref{eq_x30} and compute the limit $L\to\infty$. For the first sum we get
 \begin{multline}
 \label{eq_x14}
 b(1-qb)(1-q) \ln(q) \\ \times \Biggl[\sum_{x=1}^{LX} \sum_{y=1}^{LY} [\Rd(LX,LY;x,y)^2-\Rd(LX,LY;x+1,y)^2]\E H(x,y)
 \\+\sum_{y=1}^{LY}  \Rd(LX,LY;LX+1,y)^2\, \E H[LX,y]-\sum_{y=1}^{LY}  \Rd(LX,LY;1,y)^2\, \E H(0,y)\Biggr].
 \end{multline}
 The explicit formula \eqref{eq_Discrete_R} implies that  $L(\Rd(LX,LY;Lx,Ly)^2-\Rd(LX,LY;Lx+1,Ly)^2) \to -\frac{\partial}{\partial x} \R^2(X,Y;x,y)$ as $L\to\infty$. Combining with \eqref{eq_expectation_convergence_low}, we obtain the $L\to\infty$ asymptotics of \eqref{eq_x14}:
  \begin{multline}
  \label{eq_x31}
  L^{-1-\delta}  \beta_2 (\beta_2-\beta_1)^2  \Biggl[\int_0^X\int_0^Y \left(-\frac{\partial}{\partial x} \R^2(X,Y;x,y) \right)  \h(x,y) dxdy
 \\+\int_0^Y \R(X,Y;X,y)^2 \h[X,y]dy-\int_0^Y \R(X,Y;0,y)^2 \h(0,y)dy\Biggr].
 \end{multline}
 We further integrate by parts in \eqref{eq_x31} and do the same computation for the second sum in \eqref{eq_x30}. The final result is
 \begin{equation}
 \label{eq_x34}
  L^{-1-\delta} (\beta_2-\beta_1)^2 \int_0^X\int_0^Y \R^2(X,Y;x,y) \bigl(\beta_2\h_x(x,y)-\beta_1\h_y(x,y)\bigr) dxdy.
 \end{equation}
 Note that this is precisely the variance of $(\beta_1-\beta_2)\phi(X,Y)$, when we use Theorem \ref{Theorem_general_solution} to solve \eqref{eq_SDE_low}.

 The next step is to show that the remaining terms in \eqref{eq_x29} indeed do not contribute to the leading asymptotic behavior. We start from the second line in \eqref{eq_x29}. Note that $ (q^{H(x,y)}-q^{H(x-1,y)}) (q^{H(x,y)}-q^{H(x,y-1)})\le 0$ and $\Rd$ is uniformly bounded as $L\to\infty$ (because it converges to $\R$). Thus, the absolute value of the first line in \eqref{eq_x29} is bounded by (here $C$ is a positive constant) \begin{equation}
 \label{eq_x32}
  \frac{C}{L} \E \sum_{x=1}^{LX} \sum_{y=1}^{LY}  (q^{H(x-1,y)}-q^{H(x,y)}) (q^{H(x,y)}-q^{H(x,y-1)}).
 \end{equation}
 Note that the $(x,y)$--summand is non-zero if and only if both $H(x-1,y)=H(x,y)+1$ and $H(x,y-1)=H(x,y)$. In other words, this happens if the vertex at $(x,y)$ has type $II$ (cf.\ Figure \ref{Fig_weights}). We conclude that \eqref{eq_x32} is bounded from above by
 \begin{equation}
 \label{eq_x33}
  \frac{C'}{L^3} \E \bigl(\text{number of vertices of type }II\text{ inside }[1,LX]\times [1,LY] \bigr).
 \end{equation}
 We proceed to bound this expectation. For that let us first bound the expected number of vertices of types $V$ and $VI$ (corners). Let us denote the latter number by $\mathcal N$. Note that we have $O(L^{1-\delta})$ paths entering into $[1,LX]\times [1,LY]$  from the left or from below. Each path has $O(L^{-1})$ vertices, and at each of these vertices with probability at most $1-b_1$ or $1-b_2$ a corner might occur. We conclude that there are $O(1)$ corners along each path. It follows that $\E \mathcal N=O(L^{1-\delta})$ and $\E \mathcal N^2=O(L^{2-2\delta})$. Next note that each vertex of type $II$ must belong to a column (vertical line of fixed $x$--coordinate) in which either a path enters into the quadrant from below or there is a corner in this column. For the same reason, each vertex of type $II$ must belong to a row with similar properties. Since the number of both such rows and columns is $O(L^{1-\delta})$, we conclude that the number of vertices of type $II$ is $O( L^{1-\delta}\cdot L^{1-\delta})$. Plugging into \eqref{eq_x33} we get
 $$
    \frac{C'}{L^3} O( L^{1-\delta}\cdot L^{1-\delta})= O(L^{-1-2\delta}),
 $$
 which is of lower order than the leading term of \eqref{eq_x29}. The justification of the fact that the remainder terms that were left out when passing from \eqref{eq_x29} to \eqref{eq_x30} is straightforward and we omit it.

 We have computed the asymptotic variance of \eqref{eq_x35} and now proceed to showing the asymptotic Gaussianity. Let us linearly order the integer points inside the rectangle $[1,LX]\times [1,LY]$ as follows: $(1,1)$, $(2,1)$, $(1,2)$, $(3,1)$, $(2,2)$, $(1,3)$, $(4,1)$, $(3,2)$, $(2,3)$, $(1,4)$,\dots, i.e., we sequentially trace the diagonals $x+y=\mathrm{const}$. Theorem \ref{Theorem_4_point} implies that then
 $\Rd(LX,LY;x,y) \xi(x,y)$ is then a martingale difference in $(x,y)$, and we can apply the Martingale Central Limit Theorem, see, e.g., \cite[Section 3]{HH}. There are two conditions to check:
 \begin{enumerate}
  \item The conditional variance, which by Theorem \ref{Theorem_4_point} is given by (the expression below differs from \eqref{eq_x29} by the absence of the expectation)
  \begin{multline}
\label{eq_x36}
 \sum_{x=1}^{LX} \sum_{y=1}^{LY} \Rd(LX,LY;x,y)^2 \\ \times  \Biggl[  \bigr(qb(1-b)+b(1-qb)\bigl)
 (q^{H(x,y)}-q^{H(x-1,y)}) (q^{H(x,y)}-q^{H(x,y-1)})
  \\ + b(1-qb)(1-q) q^{H(x,y)} (q^{H(x,y)}-q^{H(x-1,y)})
   \\ - b(1-b)(1-q) q^{H(x,y)} (q^{H(x,y)}-q^{H(x,y-1)}) \Biggr],
 \end{multline}
should have the same $L\to\infty$ behavior as the unconditional variance \eqref{eq_x29}, in the sense that the ratio tends to $1$ in probability.
 \item The Lindeberg condition should hold, which in our setting reads
 \begin{equation}
 \label{eq_x37}
  \lim_{L\to\infty} \sum_{x=1}^{LX} \sum_{y=1}^{LY} \E \Bigl[ \xi^2(x,y) L^{1+\delta} I_{ \xi^2(x,y) L^{1+\delta} >\eps}]=0,\quad \text{ for each } \eps>0.
 \end{equation}
 \end{enumerate}
 For the first condition note that since we already know the decay of variance in \eqref{eq_x29}, we can infer that $L^{1-\delta} H(Lx,Ly)\to \h(x,y)$ in probability.  Since $H$ is a monotone function in each of its variables, the one--point convergence further implies the convergence to $\h$ as a continuous function of two variables in the supremum norm. Then the same argument as for \eqref{eq_x29} goes through and we obtain the same asymptotics \eqref{eq_x34} for \eqref{eq_x36} as for \eqref{eq_x29}.

 For the Lindeberg condition note that by its definition \eqref{eq_4_point_relation}, $|\xi(x,y)|$ is uniformly bounded by $C/L$ for a deterministic constant $C$.
 Thus, the indicator $I_{ \xi^2(x,y) L^{1+\delta} >\eps}$ becomes empty as $L\to\infty$, and the expression \eqref{eq_x37} vanishes for large $L$.

 The asymptotic Gaussianity follows, and we have thus shown the following convergence in finite--dimensional distributions:
 \begin{equation}
\label{eq_x38}
  \lim_{L\to\infty} L^{\frac{1+\delta}{2}}\left[q^{H(LX,LY)}-\E q^{H(LX,LY)}\right]=(\beta_1-\beta_2) \phi(X,Y).
\end{equation}
 It remains to deduce the same convergence for centered and rescaled $H(LX,LY)$. For that we write
 \begin{multline}
 \label{eq_x39}
  q^{H(LX,LY)}=q^{\E H(LX,LY)} q^{H(LX,LY)-\E H(LX,LY)}\\ =q^{\E H(LX,LY)}  \sum_{n=0}^\infty
  \frac{ \bigl[\ln(\q) (H(LX,LY)-\E H(LX,LY))\bigr]^n}{n! L^{n}}.
 \end{multline}
 Since $\ln(\q) H(LX,LY)/L$ is bounded by a deterministic constant, the series in \eqref{eq_x39} is uniformly convergent, and $q^{H(LX,LY)}-\E q^{H(LX,LY)}$ is the centered version of the same series:
 \begin{multline}
 \label{eq_x40}
 q^{\E H(LX,LY)}   \sum_{n=1}^\infty \Biggl(
  \frac{ \bigl[\ln(\q) (H(LX,LY)-\E H(LX,LY))\bigr]^n}{n!L^n}  \\ -\frac{\E \bigl[\ln(\q) (H(LX,LY)-\E H(LX,LY))\bigr]^n}{n!L^n} \Biggr).
 \end{multline}
 As $L\to\infty$, the prefactor $q^{\E H(LX,LY)}$ tends to $1$, the first term in the series is
 $$
  \frac{\ln(\q)}{L} (H(LX,LY)-\E H(LX,LY)),
 $$
 and the following terms are of lower orders. Since $\ln(\q)=\beta_1-\beta_2$, \eqref{eq_x38} now implies
 $$
   \lim_{L\to\infty} L^{\frac{1+\delta}{2}} \frac{\beta_1-\beta_2}{L} (H(LX,LY)-\E H(LX,LY))=(\beta_1-\beta_2)\phi(X,Y),
 $$
 and the proof of Theorem \ref{Theorem_LLN_CLT_low} is complete.

\section{Appendix B: Theorem \ref{Conjecture_general_CLT} through a variational principle and contour integrals}

\label{Section_CLT_through_var}

In this section we provide an alternative arguments towards the validity of Theorem \ref{Conjecture_general_CLT}. This is not a rigorous proof,  only heuristics.

This approach to Theorem \ref{Conjecture_general_CLT} was inspired by
\cite[Appendix]{BD}. In a sense, we develop (non-rigorously) a version of the local variational
principle for the stochastic six-vertex model in the limit regime \eqref{eq_limit_regime}. It would
be interesting to see whether this variational principle can be applied to other situations. For
the computations we rely on contour integral formulas of \cite{Ag}.

\bigskip

We start by considering another integrable case of boundary conditions for the stochastic six--vertex
model that generalizes domain wall boundary conditions of Section \ref{Section_domain_wall}.

At each point of the $y$--axis we flip an independent coin. It comes heads with probability $p_1$,
and in such a case we place a path entering from the left at this point. Otherwise, there is no path.
Similarly, for each point of the $x$ axis we flip a coin which comes heads with probability $p_2$
to create paths entering from the bottom. \cite{Ag} develops proves a multiple contour integral formula for the joint
moments of $q^{H}$ in this situation, generalizing the $\alpha=0$ case of Theorem
\ref{Theorem_observable}.  The formulas are quite similar and only differ by simple rational factors.

In particular, \cite[(3.13), (3.19)]{Ag} yields
\begin{multline}
\label{eq_expectation_Bernoulli} \E q^{n \cdot H(x,y)}= \bigl(\rho_1^{-1}\rho_2
\s^{-1}q^{-n};q\bigr)_n \frac{q^{n(n-1)/2}}{(2\pi \ii)^n} \oint \dots \oint
\prod_{1\le i<j\le n}
 \frac{z_i-z_j}{z_i-qz_j}\\ \times \prod_{i=1}^n \left[ \left( \frac{1+ q^{-1} \frac{1-b}{1-qb}  z_i}{1+ \frac{1-b}{1-qb} z_i}   \right)^{x-1}
 \left( \frac{1+z_i}{1+q^{-1}z_i} \right)^{y} \frac{1}{\bigl(1-q^{-1} \rho_1^{-1}z_i\bigr)\,\bigl(z_i-\rho_2 \frac{1-qb}{1-b}\bigr)} d z_i \right],
\end{multline}
where $n\ge 1$, $\rho_i=\frac{p_i}{1-p_i}$, and the contours have two parts: the first ones are \emph{nested} around
$\{\frac{1-qb}{1-b}\rho_2\}$, and the second ones all coincide with a tiny circle around $-q$. The contours avoid
singularities at $-\frac{1-qb}{1-b}$ and at $\rho_1 q$. In \cite{Ag} the formula \eqref{eq_expectation_Bernoulli} is proven in the case $\rho_1^{-1}\rho_2
\s^{-1}q^{-n}<1$; for other values of parameters, one needs to make an analytic continuation in $\rho_1$, $\rho_2$ of both sides in \eqref{eq_expectation_Bernoulli}.

The following statement is a simple
corollary of \eqref{eq_expectation_Bernoulli}, extending Theorem \ref{Theorem_LLN} and matching the
computations of Section \ref{Section_consistency}.
\begin{proposition} \label{Proposition_LLN_Bernoulli}  In the regime \eqref{eq_limit_regime}, with the Bernoulli boundary conditions as described above,
$\frac{1}{L}H(Lx,Ly)$ converges to $\h(x,y)$ given by
 \begin{multline}
 \label{eq_LLN_Bernoulli}
    \q^{\mathbf h(x,y)}
    \\ =\frac{1}{2\pi \ii} \oint_{-1} \exp\left(
      \ln(\q)\left(- x \frac{\s z}{1+ \s z }   + y\frac{z}{1+z} \right)\right)
   \left(\frac{1}{\rho_1-z} +\frac{1}{z-\rho_2
  \s^{-1}}\right)
       d z \\ +
       \exp\left(
      \ln(\q)\left(- x \frac{\rho_2 }{1+ \rho_2 }   + y\frac{\rho_2 \s^{-1}}{1+\rho_2 \s^{-1}} \right)\right),
 \end{multline}
 with positively oriented integration contour that encircles only the singularity at $z=-1$.
\end{proposition}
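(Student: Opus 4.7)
The plan is to mimic the proof of Theorem \ref{Theorem_LLN}, starting from the contour integral formula \eqref{eq_expectation_Bernoulli} rather than from Theorem \ref{Theorem_observable}. First, I would specialize \eqref{eq_expectation_Bernoulli} to $n=1$, which produces
\[
 \E q^{H(x,y)} = \bigl(1-\rho_1^{-1}\rho_2 \s^{-1} q^{-1}\bigr) \frac{1}{2\pi\ii} \oint \left( \frac{1+q^{-1} \frac{1-b}{1-qb} z}{1+\frac{1-b}{1-qb} z} \right)^{x-1} \left(\frac{1+z}{1+q^{-1}z}\right)^{y} \frac{dz}{(1-q^{-1}\rho_1^{-1}z)\bigl(z-\rho_2 \frac{1-qb}{1-b}\bigr)},
\]
with a positively oriented contour enclosing the two poles at $-q$ and at $\rho_2 \frac{1-qb}{1-b}$, and avoiding $-\frac{1-qb}{1-b}$ and $\rho_1 q$.

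Next, I would pass to the limit $L\to\infty$ in the regime \eqref{eq_limit_regime}, \eqref{eq_limit_regime_2} with $(x,y)$ replaced by $(Lx,Ly)$. The standard expansions
\[
\left(\frac{1+q^{-1} \frac{1-b}{1-qb} z}{1+\frac{1-b}{1-qb} z}\right)^{Lx} \longrightarrow \exp\!\left(-x \ln(\q)\, \frac{\s z}{1+\s z}\right), \qquad  \left(\frac{1+z}{1+q^{-1}z}\right)^{Ly} \longrightarrow \exp\!\left(y \ln(\q)\, \frac{z}{1+z}\right),
\]
together with $\frac{1-b}{1-qb}\to \s$ and $q\to 1$, turn the integrand into the one appearing on the right-hand side of \eqref{eq_LLN_Bernoulli}; the prefactor becomes $1-\rho_1^{-1}\rho_2\s^{-1}=(\rho_1-\rho_2\s^{-1})/\rho_1$, and the two factors in the denominator become $\rho_1^{-1}(\rho_1-z)$ and $(z-\rho_2\s^{-1})$. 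The partial fraction identity
\[
 \frac{\rho_1-\rho_2\s^{-1}}{(\rho_1-z)(z-\rho_2\s^{-1})} = \frac{1}{\rho_1-z}+\frac{1}{z-\rho_2\s^{-1}}
\]
rewrites the limit as the contour integral in \eqref{eq_LLN_Bernoulli}, except that the contour still encloses both $-1$ and $\rho_2\s^{-1}$. Shrinking it so that it encircles only $-1$ costs the residue at $z=\rho_2\s^{-1}$, which evaluates exactly to the additive exponential term appearing in the statement; this yields the formula for $\lim_{L\to\infty} \E q^{H(Lx,Ly)}$.

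It remains to upgrade this convergence of expectation to convergence in probability of $\frac{1}{L}H(Lx,Ly)$ to $\h(x,y)$. For this I would apply \eqref{eq_expectation_Bernoulli} with $n=2$ to obtain an integral expression for $\E q^{2H(Lx,Ly)}$, expand the double contour integral in exactly the fashion of Section \ref{Section_covariance_integral} (splitting off the cross-term coming from $\frac{z_1-z_2}{z_1-qz_2}-1$, which is of order $L^{-1}$), and conclude that the variance $\E q^{2H(Lx,Ly)} - (\E q^{H(Lx,Ly)})^2$ tends to $0$. Chebyshev then gives $q^{H(Lx,Ly)}\to \q^{\h(x,y)}$ in probability, and since $h\mapsto \q^h$ is strictly monotone (and uniformly bi-Lipschitz on the bounded range dictated by $|H|\le x+y$), this transfers to $\frac{1}{L}H(Lx,Ly)\to\h(x,y)$.

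The only genuinely delicate point will be bookkeeping of the contour during the $L\to\infty$ limit: one must verify that the nested contour of \eqref{eq_expectation_Bernoulli} can indeed be deformed to the simple contour encircling $-1$ alone without crossing the moving singularities at $-\frac{1-qb}{1-b}$ and $\rho_1 q$, and that the residue collected at $\rho_2\frac{1-qb}{1-b}$ passes to the limit as written. This is routine but needs to be stated explicitly; everything else is a direct steepest-descent-free evaluation.
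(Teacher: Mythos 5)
Your proposal is correct and is precisely the argument the paper is pointing to (the paper calls the proposition a ``simple corollary of \eqref{eq_expectation_Bernoulli}, extending Theorem \ref{Theorem_LLN}'' but does not spell it out). The $n=1$ specialization, the $L\to\infty$ limits of the two exponential factors, the $q\to1$ limits of the prefactor $1-\rho_1^{-1}\rho_2\s^{-1}q^{-1}\to(\rho_1-\rho_2\s^{-1})/\rho_1$ and of the denominator factors, the partial-fraction step, and the residue at $z=\rho_2\s^{-1}$ producing the additive exponential are all verified correctly. The variance estimate via $n=2$, mirroring Section \ref{Section_covariance_integral}, is the right closing move; just note that when isolating the $O(L^{-1})$ cross-term you should combine the $q^{n(n-1)/2}=q$ prefactor and the second factor of the $q$-Pochhammer symbol $(\rho_1^{-1}\rho_2\s^{-1}q^{-2};q)_2$ with $\frac{z_1-z_2}{z_1-qz_2}$, so that what you subtract is the product of two $n=1$ integrals exactly (as in \eqref{eq_x18}), rather than literally splitting off $\frac{z_1-z_2}{z_1-qz_2}-1$. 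One small caveat worth inserting: the paper notes that \eqref{eq_expectation_Bernoulli} is proven in \cite{Ag} only for $\rho_1^{-1}\rho_2\s^{-1}q^{-n}<1$ and is extended by analytic continuation in $\rho_1,\rho_2$ otherwise; you should say this when invoking the formula. Otherwise the argument is complete and matches the paper's intent.
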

\begin{remark}
 When $\rho_1=\rho_2 \s^{-1}$, the distribution of the system in translationally invariant, see \cite{Ag}. This matches \eqref{eq_LLN_Bernoulli} turning into $ \q^{\mathbf h(x,y)}=
      \q^{- x p_2+ yp_1}$.
\end{remark}

An important quantity for us is the second mixed derivative of \eqref{eq_LLN_Bernoulli} at $0$:
\begin{equation}
 M^{\eps}(x,y):=\q^{\mathbf h(\eps x,\eps y)}-\q^{\mathbf h(\eps x,0)}-\q^{\mathbf h(0,\eps y)}+\q^{\mathbf h(0,0)}.
\end{equation}
Direct computation shows that, as $\eps\to 0$,
\begin{equation}
\label{eq mean_compute}
 M^{\eps}(x,y)=\eps^2 xy \ln^2(\q) \frac{p_1\s-p_2}{1-\s}+o(\eps^2)=\eps^2 xy (\beta_2-\beta_1) (p_1\beta_1-p_2\beta_2)+o(\eps^2).
\end{equation}

The computation \eqref{eq_expectation_Bernoulli} admits an extension to joint $q$--moments for
several points $(x,y)$, that lie on the same vertical or same horizontal line, similarly to
Theorem \ref{Theorem_observable}. We can even reach the collections of points on more general
monotone paths:
\begin{equation}
\label{eq_monotone_section}
(x_1,y_1),\, (x_2,y_2),\dots, (x_k,y_k):\quad
x_1\ge x_2\ge\dots\ge x_k,\quad y_1\le
y_2\le \dots\le y_k;
\end{equation}
for the domain wall boundary conditions this was done in \cite{BBW}, and here the situation is analogous.

It is very plausible that arguing similarly to the proof of CLT in Section \ref{Section_domain_wall}, one can reach the following statement.
\begin{claim} \label{Claim_CLT_Bernoulli_multi}
 For the stochastic six-vertex model with Bernoulli boundary conditions as described above,
  as $L\to\infty$ in the regime \eqref{eq_limit_regime},
  $L^{1/2} \bigl(q^{H(Lx,Ly)}-\E q^{H(Lx,Ly)}\bigr)$ converges to a Gaussian random variable (jointly over
monotone sections \eqref{eq_monotone_section}) with variance given for $x_1\ge x_2$, $y_1\le y_2$ by
\begin{multline}
\label{eq_covariance_Bernoulli_1} \lim_{L\to\infty} L  \biggl(\E(q^{H(Lx_1,Ly)}
q^{H(Lx_2,Ly)}- \E q^{H(Lx_1,Ly_1)}\E q^{H(Lx_2,Ly_2)}) \biggr)\\=
 \frac{\ln(\q)}{(2\pi\ii)^2}  \oint \oint
 \frac{z_1 \rho_1-z_2\rho_2 \s^{-1}}{(z_1-z_2)(\rho_1-\rho_2
\s^{-1})} \\ \times \prod_{i=1}^2 \left[ \exp\left(
      \ln(\q)\left(- x_i \frac{\s z_i}{1+ \s z_i }   + y_i\frac{z_i}{1+z_i} \right)\right)
   \left(\frac{1}{\rho_1-z_i} +\frac{1}{z_i-\rho_2
  \s^{-1}}\right)
       d z_i\right],
\end{multline}
where the integration goes in positive direction around the singularities at $-1$ and at $\rho_2
\s^{-1}$, and $z_1$ is inside $z_2$.
\end{claim}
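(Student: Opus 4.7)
The plan is to follow closely the strategy of the proof of Theorem \ref{Theorem_CLT}. As a first step, I would establish a multi-point extension of formula \eqref{eq_expectation_Bernoulli} that gives a contour integral representation for the joint expectation $\E\prod_{k=1}^n q^{H(x_k,y_k)}$ whenever $(x_k,y_k)$ lie on a monotone section \eqref{eq_monotone_section}. The domain wall analogue along monotone sections was obtained in \cite{BBW}, and the same symmetrization that produces \eqref{eq_expectation_Bernoulli} from the $\alpha=0$ case of Theorem \ref{Theorem_observable} should yield the Bernoulli version. The integrand will contain the usual $\prod_{i<j}\frac{z_i-z_j}{z_i-qz_j}$ factor, the monomial parts $\bigl(\frac{1+q^{-1}\s z_i}{1+\s z_i}\bigr)^{x_i}\bigl(\frac{1+z_i}{1+q^{-1}z_i}\bigr)^{y_i}$, and the additional rational prefactor $\prod_i \bigl[(1-q^{-1}\rho_1^{-1}z_i)(z_i-\rho_2\tfrac{1-qb}{1-b})\bigr]^{-1}$, with contours as in \eqref{eq_expectation_Bernoulli} and partial nesting between $i$'s as in Proposition \ref{Proposition_contour_deformation}.

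Given this formula, the covariance computation mirrors Section \ref{Section_covariance_integral}. Applying the $n=2$ version and subtracting $\E q^{H_1}\E q^{H_2}$, one expands $\frac{z_1-z_2}{z_1-qz_2}=1-(q-1)\frac{z_1}{z_1-z_2}+O((q-1)^2)$ so that in the difference the leading nontrivial contribution has prefactor $(q-1)\frac{z_1}{z_1-z_2}$. Multiplication by $L$ converts $(q-1)L$ into $\ln(\q)$, matching the outer factor in \eqref{eq_covariance_Bernoulli_1}. Simultaneously the $q$-Pochhammer prefactor $(\rho_1^{-1}\rho_2\s^{-1}q^{-2};q)_2$ tends to $(1-\rho_1^{-1}\rho_2\s^{-1})^2$, and combining it with the limits of $\tfrac{1}{1-q^{-1}\rho_1^{-1}z_i}$ and $\tfrac{1}{z_i-\rho_2(1-qb)/(1-b)}$ produces the factor $\bigl(\tfrac{1}{\rho_1-z_i}+\tfrac{1}{z_i-\rho_2\s^{-1}}\bigr)$ together with the symmetric numerator $\tfrac{z_1\rho_1-z_2\rho_2\s^{-1}}{\rho_1-\rho_2\s^{-1}}$ after a short algebraic rearrangement. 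The extra exponential term appearing in the LLN formula of Proposition \ref{Proposition_LLN_Bernoulli} (a residue at $z=\rho_2\s^{-1}$) is absorbed into the double integral by an appropriate contour deformation, explaining why \eqref{eq_covariance_Bernoulli_1} carries no additional single-integral correction of the type seen in \eqref{eq_covariance_integral_zero}.

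To upgrade from the covariance to full joint Gaussianity I would show that the $n$-th joint cumulant of $L^{1/2}\bigl(q^{H(Lx_k,Ly_k)}-\E q^{H(Lx_k,Ly_k)}\bigr)$ is $o(1)$ for $n>2$. Defining $M'_A$ as the joint $q$-moment on $A\subset\{1,\dots,n\}$ given by the $|A|$-point contour integral, the ``cumulant-type sum'' $C'_n$ from Section \ref{Section_cumulant_sums} is $o(L^{-n/2})$ by \cite[Lemma 4.2]{BG_GFF} applied with $\gamma=1$; the extra rational prefactor is uniformly bounded on the integration contours and does not affect the cancellations that underlie that lemma. Theorem \ref{theorem_shifted_cumulants} (with $\eps=L^{-1/2}$, $r_i=\q^{\h(x_i,y_i)}$, $a_{ij}\equiv 0$, and $\xi_i=L^{1/2}(q^{H(Lx_i,Ly_i)}-\E q^{H(Lx_i,Ly_i)})$) then converts $C'_n$ into $L^{-n/2}C_n$ plus lower-order remainders, yielding $C_n=o(1)$ for $n>2$ and producing the Gaussian limit.

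The principal obstacle I anticipate is the rigorous justification of the multi-point $q$-moment formula along monotone sections for Bernoulli boundary conditions; while the domain wall multi-point case is in \cite{BBW} and the one-point Bernoulli case in \cite{Ag}, their combination requires either a careful symmetrization from \cite{BBW} or a direct derivation via the Yang--Baxter/duality approach of \cite{BP1}, \cite{BP2}, and the contour deformations analogous to Proposition \ref{Proposition_contour_deformation} must be re-examined in the presence of the new singularities at $z_i=q\rho_1$ and $z_i=\rho_2(1-qb)/(1-b)$. Once these technical points are secured, all subsequent steps transfer from Section \ref{Section_domain_wall} essentially verbatim.
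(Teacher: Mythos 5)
The paper does not actually prove Claim~\ref{Claim_CLT_Bernoulli_multi}; it is stated as ``very plausible'' inside the heuristic appendix (Section~\ref{Section_CLT_through_var}), with only the one-sentence suggestion that one should ``argue similarly to the proof of CLT in Section~\ref{Section_domain_wall}.'' Your proposal is exactly that suggested route filled out into a plan, so in spirit you are doing what the authors had in mind; but since they left the argument unverified, the details you gloss over are precisely where the work lies, and a couple of them are not quite right as stated.

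First, a small but telling slip in the covariance step: the expansion
$\frac{z_1-z_2}{z_1-qz_2}=1-(q-1)\frac{z_1}{z_1-z_2}+O((q-1)^2)$ is wrong. The correct first-order expansion is $1+(q-1)\frac{z_2}{z_1-z_2}$; the factor $\frac{z_1}{z_1-z_2}$ only appears after one also includes the $q^{n(n-1)/2}=q$ prefactor of the observable (compare \eqref{eq_x18}), since $q\,\frac{z_1-z_2}{z_1-qz_2}-1=\frac{(q-1)z_1}{z_1-qz_2}$. Your formula would give the wrong sign and the wrong residue structure, so this needs to be fixed before the algebra can produce the rational factor $\frac{z_1\rho_1-z_2\rho_2\s^{-1}}{(z_1-z_2)(\rho_1-\rho_2\s^{-1})}$ in \eqref{eq_covariance_Bernoulli_1}.

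Second, and more substantively, the Gaussianity argument as you set it up cannot go through with $a_{ij}\equiv 0$. In the domain-wall case Theorem~\ref{Theorem_observable} produces the \emph{shifted} observable $\E\prod_k(q^{H(x_k,y)}-q^{k-1})$, and the whole point of Theorem~\ref{theorem_shifted_cumulants} (used there via Remark~\ref{Remark_shifted_cumulants_homo} with nontrivial $\tilde a_j$) is to correct for these $q^{k-1}$-shifts. The Bernoulli multi-point formula that you would extract from \cite{Ag} and \cite{BBW} has an analogous index-dependent structure, and in addition \eqref{eq_expectation_Bernoulli} carries the $n$-dependent $q$-Pochhammer prefactor $(\rho_1^{-1}\rho_2\s^{-1}q^{-n};q)_n$, which to first order in $(q-1)$ contributes index-dependent corrections of exactly the $\prod_{k<l}(1+\eps^2 a_{i_k,i_l})$ type. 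Setting $a_{ij}\equiv 0$ discards both effects, so the cumulant bookkeeping would not reduce to the conventional cumulants and \cite[Lemma 4.2]{BG_GFF} would be applied to the wrong quantity. You need to mirror Remark~\ref{Remark_shifted_cumulants_homo} and carry a nonzero $\tilde a_j$ encoding both the $q^{k-1}$ shifts and the Pochhammer prefactor. Once those two issues are repaired, the skeleton of the argument is the one the paper intends, and you have correctly identified the principal remaining obstacle: establishing the multi-point joint-$q$-moment formula for Bernoulli boundary data along monotone sections, together with the nested-contour deformation in the presence of the poles at $q\rho_1$ and $\rho_2\frac{1-qb}{1-b}$.
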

\begin{remark}
 The right--hand side  of \eqref{eq_covariance_Bernoulli_1} depends on $\rho_1$, $\rho_2$ in an analytic way; in order to continue through the line $\rho_1=\rho_2 \s^{-1}$, one should split $z_1$ and $z_2$ integrals into two parts: enclosing $-1$ and enclosing $\rho_2 \s^{-1}$. The latter part can then be explicitly computed.
\end{remark}

Let $\tilde h(x,y)$ denote the limiting Gaussian field of Claim
\ref{Claim_CLT_Bernoulli_multi}. We are interested in the following mixed difference:
\begin{equation}
\label{eq_second_difference}
 D^\eps(x,y):=\tilde h(\eps x,\eps y)+\tilde h(0,0) - \tilde h(\eps x,0) -\tilde h(0,\eps y).
\end{equation}
Note that $\tilde h(0,0)=0$, but we still add it to the formula in order to emphasize the
structure. Claim \ref{Claim_CLT_Bernoulli_multi} implies that $D^\eps(x,y)$ is
Gaussian, and we would like to find its variance as $\eps\to 0$. We compute
\begin{multline}
\label{eq_x15}
 \mathrm{Var}(D^\eps(x,y))=
 \mathrm{Cov}(\tilde h(\eps x,\eps y),\tilde h(\eps x,\eps y))\\ +
  \mathrm{Cov}(\tilde h(\eps x,0),\tilde h(\eps x,0))+
   \mathrm{Cov}(\tilde h(0,\eps y),\tilde h(0,\eps y))
 \\ -2 \mathrm{Cov}(\tilde h(\eps x,\eps y),\tilde h(\eps x,0))
 -2 \mathrm{Cov}(\tilde h(\eps x,\eps y),\tilde h(0,\eps y))
 +2 \mathrm{Cov}(\tilde h(\eps x,0),\tilde h(0,\eps y)),
\end{multline}
where the last term vanishes, as the boundary values are independent. We use the expression of
Claim \ref{Claim_CLT_Bernoulli_multi} for each term of \eqref{eq_x15}, expand the
exponentials in series in $\eps$, and compute the integrals as residues. Simplifying the result and expressing it in terms of $p_1$, $p_2$ we
get
\begin{multline}
\label{eq_Var_compute}
  \mathrm{Var}\bigl[D^\eps(x,y)\bigr]=-\eps^2 x y \ln^3(\q)\, \frac{-p_1 p_2 (\s+1)+p_1\s+p_2}{1-\s}+o(\eps^2)\\ = \eps^2 x y (\beta_2-\beta_1)^2
  \bigl(-p_1 p_2 (\beta_1+\beta_2)+p_1\beta_1+p_2\beta_2 \bigr)+o(\eps^2).
\end{multline}
Note that the individual terms in the definition of $D^{\eps}(x,y)$ have much greater variance. For
instance, $\mathrm{Var} \tilde h(\eps x,0)= \eps x p_2(1-p_2)$ due to the conventional CLT for sums of
independent Bernoulli random variables. However, mixed difference leads to cancelations, and
\eqref{eq_Var_compute} has variance of order $\eps^{2}$ rather than $\eps$.

\begin{proof}[Heuristic proof of Theorem \ref{Conjecture_general_CLT}]

 Fix small $\eps>0$ and consider the values of the height function $H$ at points $(\eps i,\eps
 j)$, $i,j=1,2,\dots$ inside a fixed
 $[0,A]\times [0,B]$ rectangle.

 We would like to compute the conditional distribution of $q^{H(\eps L (i+1),\eps L (j+1))}$ given
 $q^{H(\eps L i,\eps L j)}$, $q^{H(\eps L (i+1),\eps L j)}$, $q^{H(\eps L i,\eps L (j+1))}$.

At this moment we will make a non-rigorous step, approximating the system in an $\eps L \times \eps L$ square by the system with Bernoulli boundary conditions as in Proposition \ref{Proposition_LLN_Bernoulli}, Claim \ref{Claim_CLT_Bernoulli_multi} in a similarly sized square. Therefore, we say that when $\eps$ is small and $L$ is large, the horizontal lines crossing the vertical segment between points
 $(\eps L i, \eps L j)$ and $(\eps L i, \eps L (j+1))$ become Bernoulli--distributed with parameter
 $$
  p_1\approx \frac{H(\eps L i, \eps L (j+1)) - H(\eps L i, \eps L j)}{\eps L}.
 $$
 The vertical lines crossing the horizontal segment between points $(\eps Li, \eps Lj)$ and $(\eps
 L(i+1),\eps L(j))$ also become Bernoulli--distributed with parameter
 $$
  p_2\approx \frac{H(\eps L i, \eps L j)) - H(\eps L (i+1), \eps L j)}{\eps L}.
 $$

 At this point we can use Claim \ref{Claim_CLT_Bernoulli_multi},
 which will give us the conditional distribution as a Gaussian law. Shortening the notations as $h_{ij}=H(\eps L i,\eps Lj)$, we
 write
 \begin{multline}
 \label{eq_Law_factor}
  \mathrm{Prob}\Bigl(q^{h_{i+1,j+1}} \mid q^{h_{i,j}}, q^{h_{i+1,j}}, q^{h_{i,j+1}}\Bigr)
   \\ \approx \frac{1}{\sqrt{2\pi \eps^2 L \,V[p_2,p_1]}} \\ \times
   \exp\left(-\frac{\left(q^{h_{i+1,j+1}}- q^{h_{i+1,j}}-q^{h_{i,j+1}}+
  q^{h_{i,j}}-L\eps^2 M(p_1,p_2) \right)^2}{2 \eps^2 L\, V[p_1,p_2]}
  \right),
 \end{multline}
 where $\eps^2 M(p_1,p_2)$ is $\q^\h$ multiplied by the leading $\eps\to 0$ term of the expression \eqref{eq mean_compute} with $x=y=1$, and $\eps^2 V[p_1,p_2]$ is $\q^{2\h}$ multiplied by the leading $\eps\to 0$ term of the expression \eqref{eq_Var_compute} with $x=y=1$. The multiplication by $\q^\h$ and $\q^{2\h}$ appears because of the height function at the origin was zero in Proposition \ref{Proposition_LLN_Bernoulli} and Claim \ref{Claim_CLT_Bernoulli_multi}, while we need the value $h_{ij}$ here.

At this point we can multiply \eqref{eq_Law_factor} over all $i,j$ to get the joint
law of $h_{i,j}$, $i,j=1,2,\dots$. Implicitly we use the Markovian structure
of the stochastic six--vertex model here.

Now let us analyze various parts of \eqref{eq_Law_factor}. Recall that as
$L\to\infty$, $q^{H(Lx,Ly)}$ approximates a smooth profile $\q^\h(x,y)$ plus $\frac{1}{\sqrt{L}}$ multiplied
by the fluctuation field $\phi(x,y)$ as in Theorem \ref{Conjecture_general_CLT}. Then we have
$$
 p_1\approx \frac{\partial}{\partial y} \frac{1}{L} H(Lx,Ly)\approx \frac{\q^{\h}_y+L^{-1/2} \phi_y}{\ln(\q) \q^\h},
$$
$$
p_2\approx - \frac{\partial}{\partial x} \frac{1}{L} H(Lx,Ly)\approx-\frac{\q^{\h}_x+L^{-1/2} \phi_x}{\ln(\q) \q^\h}.
$$
$$
q^{h_{i+1,j+1}}- q^{h_{i+1,j}}-q^{h_{i,j+1}}+
  q^{h_{i,j}}\approx \q^\h_{xy} \eps^2 L+ \phi_{xy}(\eps i,\eps j) \eps^2  L^{1/2}.
$$

Therefore, plugging in the expression for $M[p_1,p_2]$,
the joint law of all $h_{i,j}$ can be approximated as
\begin{multline}
\label{eq_Law_joint}
 \prod_{i,j}  \left(2\pi \eps^2 L \,V\left[\frac{\q^{\h}_y}{\ln(\q) \q^\h},-\frac{\q^{\h}_x}{\ln(\q) \q^\h}\right]\right)^{-1/2}
 \\ \times \exp\left( - L \eps^2 \frac{(\q^\h_{xy} +\beta_1 \q^\h_y+\beta_2 \q^\h_x+ L^{-1/2}(\phi_{xy}+\beta_1 \q^\h_y+\beta_2\q^\h_x)^2}{2 V\left[\frac{\q^{\h}_y}{\ln(\q) \q^\h},-\frac{\q^{\h}_x}{\ln(\q) \q^\h}\right]}\right),
\end{multline}
where in $(i,j)$th term all functions are evaluated at the point $(x,y)=(\eps i,\eps j)$.

Theorem \ref{Theorem_LLN_general} says that $\q^\h_{xy} +\beta_1 \q^\h_y+\beta_2 \q^\h_x$ in \eqref{eq_Law_joint} vanishes.\footnote{Alternatively, one can use the leading exponential part of \eqref{eq_Law_joint} to \emph{show} that $\q^\h_{xy} +\beta_1 \q^\h_y+\beta_2 \q^\h_x=0$.}
Plugging in the expression for $V[\cdot,\cdot]$, we further approximate the joint law of all $h_{i,j}$ by
\begin{multline}
\label{eq_Law_joint_2}
 \prod_{i,j} \frac{1}{\sqrt{2\pi \eps^2 L \,\bigl(\q^{\h}_y \q^{\h}_x  (\beta_1+\beta_2)+\q^{\h}_x\q^\h\beta_2(\beta_2-\beta_1)-\q^{\h}_y \q^\h \beta_1(\beta_2-\beta_1) \bigr)}}
 \\ \times \exp\left( - \eps^2 \frac{(\phi_{xy}+\beta_1 \q^\h_y+\beta_2\q^\h_x)^2}{2 \bigl(\q^{\h}_y \q^{\h}_x  (\beta_1+\beta_2)+\q^{\h}_x\q^\h\beta_2(\beta_2-\beta_1)-\q^{\h}_y \q^\h \beta_1(\beta_2-\beta_1) \bigr)}\right).
\end{multline}
Note that informally the second line in \eqref{eq_Law_joint_2} approximates as $\eps\to 0$ the exponential of a double integral, which shows that the scalings are chosen in the correct way. On the other hand, it matches Theorem \ref{Conjecture_general_CLT}. Indeed, the numerator in the exponential is the left--hand side of \eqref{eq_SDE_betas}, and the denominator is the same as the (squared) coefficient in the right--hand side. The noise in \eqref{eq_SDE_betas} is Gaussian, as is density in \eqref{eq_Law_joint_2}. Finally, the noise is white (uncorrelated), and \eqref{eq_Law_joint_2} has the product structure over points of the plane manifesting the independence.
\end{proof}

\end{document}